\newtheorem{thm}{Theorem}[section]
\newtheorem{lem}[thm]{Lemma}
\newtheorem*{ack}{Acknowledgments}
\theoremstyle{definition}
\newtheorem{definition}[thm]{Definition}
\theoremstyle{remark}
\newtheorem*{notation}{Notation}
\theoremstyle{remark}
\newtheorem{remark}[thm]{Remark}
\numberwithin{equation}{section}
\newcommand{\C}{{\mathbb C}}
\newcommand{\N}{{\mathbb N}
}
\newcommand{\R}{{\mathbb R}}
\definecolor{blu}{rgb}{0,0,1}
\newcommand{\beq[1]}{\begin{equation}\label{eq:#1}}
\newcommand{\eeq}{\end{equation}}
\begin{document}
	
\title{Stationary waves with prescribed $L^2$-norm for the planar Schr\"odinger-Poisson system}

\author{\sc{Silvia Cingolani} 
	\and
	\sc{Louis Jeanjean}}

\date{}
\maketitle

\begin{abstract}
	\noindent
The paper deals with the existence of standing wave solutions for the Schr\"odinger-Poisson system with prescribed mass in dimension $N=2$.  This leads to investigate the existence of normalized solutions for an   integro-differential equation involving a logarithmic convolution potential, namely 
\begin{equation*}
\left \{
\begin{aligned}
- \Delta u & +  \gamma \Bigl(\log {| \cdot |} * |u|^2 \Bigr) u =a |u|^{p-2} u
\qquad \text{in $\R^2$,} \\ 
&\int_{\R^2}  |u|^2 dx = c
\end{aligned}
\right.
\end{equation*}
where $c>0$ is a given real number. 
Under different assumptions on  $\gamma \in \R$, $a \in \R$, $p>2$, we prove several existence and multiplicity results. 
With respect to the related higher dimensional cases, the presence of the logarithmic kernel, which is unbounded from above and below, makes the structure of the solution set much richer and it forces the implementation of new ideas to catch the normalized solutions. 
\end{abstract}

\noindent
{\bf Keywords}: Nonlinear
 Schr\"odinger-Poisson systems; stationary waves; normalized solutions;  logarithmic convolution kernel; 
 variational methods.

\section{Introduction}\label{Introduction}

We consider the Schr\"odinger-Poisson system  of the type
\begin{equation}
\left \{
\begin{aligned}
	i \psi_t  &- \Delta \psi +  \gamma w \psi =  a |\psi|^{p-2} \psi ,\\
	&\Delta w = |\psi|^2
\end{aligned}
\right.
\quad \text{in $\R^N \times \R$}
\label{prob}
\end{equation}
where $\psi : \R^N \times \R \to \C$  is the (time-dependent) wave
function, $\gamma \in \R$, $a \in \R$, $p>2$. The function $w$ represents an internal potential for a nonlocal self-interaction of
the wave function $\psi$. The standing wave ansatz $\psi(x,t)= e^{-i
\lambda t} u(x)$, $\lambda \in \R$, reduces (\ref{prob}) to the system
\begin{equation}
\label{prob1}
\left\{
\begin{aligned}
- \Delta u &+ \lambda u + \gamma \, w u = a |u|^{p-2} u,\\
&\Delta w = u^2
\end{aligned}
\right.
\qquad \text{for $u: \R^N \to \R$}.
\end{equation}
The second equation determines $w: \R^N \to \R$ only up to harmonic
functions, but it is natural to choose $w$ as the Newton potential of
$u^2$, i.e., the convolution of
$u^2$ with the fundamental solution $\Phi_N$ of the Laplacian. With
this formal inversion of the second equation in (\ref{prob1}), we
obtain the integro-differential equation
\begin{equation}
\label{eq:56}
- \Delta u + \lambda  u +  \gamma [\Phi_N * |u|^2] u =a |u|^{p-2} u
\qquad \text{in $\R^N$,}
\end{equation}
where $\Phi_N(x)= -\frac{1}{N(N-2)\omega_d}|x|^{2-N}$ in case $N \ge 3$
and $\Phi_N(x)=\frac{1}{2\pi} \log |x|$ in case $N=2$. Here, as usual,
$\omega_N$ denotes the volume of the unit ball in $\R^N$.

Due to its physical relevance in physics, the system has been extensively studied and it is quite well understood in the case $N \geq 3$.  In particular variational methods are employed to derive existence and multiplicity results of standing waves solutions 
 \cite{AmRu,AzPo,Lieb,penrose,CiCaSe,morozschaftingen,pekar} and  \cite{Lions,BeJeLu,JeLu,luo} for standing wave solutions with prescribed $L^2$-norm.

 In two dimensions, due to the logarithmic nature of its convolution
 kernel, the nonlocal nonlinearity exhibits some serious mathematical differences to
 the higher dimensional case.
 The study of planar nonlocal problems (\ref{eq:56}) remained for a long time an open field of investigation, apart from some 
 numerical studies suggesting the existence of bound states \cite{harrison}.
 
In contrast with the higher-dimensional case $N \geq 3$, the applicability of variational methods is not straightforward for \(N = 2\). 
Although (\ref{eq:56}) has, at least formally, a variational structure related
to the energy functional
$$
I(u)= \frac{1}{2} \int_{\R^2} \bigl(|\nabla u|^2 
+  \lambda u^2 \bigr) dx +\frac{\gamma}{8 \pi} \int_{\R^2}\int_{\R^2}
\log(|x-y|^2) {|u (x)|}^2 {|u (y)|}^2  dx dy - 
\frac{a}{p} \int_{\R^2} | u|^p  dx  
$$
this energy functional is not well-defined on the natural Sobolev space $H^1(\R^2)$.

Inspired by \cite{Stubbe}, T. Weth and the first author \cite{CiWe} developed a variational framework to deal with the equation (\ref{eq:56}),  within the smaller Hilbert space
$$
X:= \Bigl\{u \in H^1(\R^2)\ | \ \int_{\R^2} \log(1+|x|) |{u (x)}|^2 dx < \infty\Bigr\},
$$
endowed with a norm defined for each function \(u \in X\) by
\[
\|u \|_X^2:= \int_{\R^2} {|\nabla u(x)|}^2 + {|u (x)|}^2 \bigl(1 + \log(1+ |x|) \bigr) dx.
\] 
Even if $X$ provides a variational framework for $(\ref{eq:56})$, 
some difficulties however arise in the application of variational arguments, since   the norm of $X$ 
is not invariant under translations whereas the functional $I$ is invariant under translations of \(\mathbb{R}^2\) and  the quadratic part of the functional $I$  is never coercive on $X$. In \cite{CiWe}, for $\lambda >0$ fixed, the authors constructed a sequence of solution pairs $(\pm u_n)_{n \in \mathbb{N}} \subset X$  to the equation \eqref{eq:56}  such that $I(u_n) \to \infty$ as $n \to + \infty$, under the assumption $p \geq 4$ and $\gamma >0$, $a \geq 0$. They also provided a variational characterization of the least energy solution.
Successively,  Du and Weth proved the existence of ground state solutions and of infinitely many nontrivial changing sign  solutions for $(\ref{eq:56})$ when $2<p<4$.
When $a=0$, $\gamma >0$, the equation $(\ref{eq:56})$ is also referred to as the planar Choquard equation and it can be derived
from the Schr\"odinger-Newton \cite{penrose}. In \cite{CiWe}, it has been showed that every positive solution $u \in X$ of \eqref{eq:56} is radially symmetric up to translation and strictly decreasing in the distance
from the symmetry center.  Moreover $u$ is unique up to translation in \(\R^2\).
In \cite{BoCiVa}, Bonheure, Van Schaftingen and the first author  obtained sharp decay estimates of this unique positive solution to the logarithmic Choquard equation \eqref{eq:56} and they showed the nondegeneracy of the unique positive ground state.  We also mention the recent paper \cite{BatVanS} for the existence of the ground state of \eqref{eq:56},  with $a=0$, $\gamma =1$, via relaxed problems.

In the present paper we are interested to study existence of standing waves solutions for the planar Schr\"odinger-Poisson system with prescribed mass, which is a physically relevant  open problem.
To this aim, for any $c \in \R$, $c > 0$, we consider the problem of finding of 
solutions to
\begin{equation}
\left \{
\begin{aligned}
- \Delta u & +  \gamma \Bigl(\log {| \cdot |} * |u|^2 \Bigr) u =a |u|^{p-2} u
\qquad \text{in $\R^2$,} \\ 
&\int_{\R^2}  |u|^2 dx = c.
\end{aligned}
\right.
\label{eq:59}
\end{equation}
Solutions to (\ref{eq:59}) can be obtained as critical points of the energy functional  
$$F(u)=\frac{1}{2}A(u)+\frac{\gamma}{4}V(u)-\frac{a}{p}\; C(u)$$
where  
$$A(u)=\int_{\R^ 2}|\nabla u(x)|^ 2\;dx, \quad V(u)=\iint_{\R^ 2\times \R^ 2}|u(x)|^2|u(y)|^2\; \log(|x-y|)\,dx\,dy$$
and 
$$ C(u)=\int_{\R^2}|u(x)|^p\;dx$$ under the constraint
\begin{equation}
S(c)=\{u\in X \; | \; \|u\|_{L^2}^2 =c\}.
\end{equation}
If $p>2$, $F$ is well defined and $C^1$ on $X$ (see for example \cite[Lemma 2.2]{CiWe})
and any critical point $u$ of $F_{|S(c)}$ corresponds to a solution of (\ref{eq:56}) where the parameter $\lambda \in \R$ appears as a Lagrange multiplier.

We shall seek for normalized solutions to $(\ref{eq:59})$ using variational arguments and 
we address a situation which is substantially different compared to those considered in the three dimensional case
\cite{Lions,BeJeLu,JeLu,luo}, since the logarithmic kernel changes it sign and  the energy functional can  unbounded from above and below on the constraint. This forces the implementation of new ideas to catch the normalized solutions.

As a first main result, we explicit conditions under which the functional $F$ is bounded from below on $S(c)$ and the infimum
\begin{equation}\label{definfimum}
m= \underset{S(c)}{\inf} F
\end{equation}
 is achieved. We prove the following result.

\begin{thm}\label{minimization1}
Assume $\gamma >0$ and that one of the three following conditions holds:
$$ \mbox{(i) } a \leq 0 \mbox{ and } p>2, \quad \mbox{(ii) }  a > 0  \mbox{ and } p<4, \quad \mbox{(iii) }  a > 0, p=4  \mbox{ and } c < \displaystyle \frac{2}{a K_{GN}},$$
where ${K_{GN}}$ is the best constant of the Gagliardo-Nirenberg inequality (\ref{GN}).
Then the infimum $m$ defined in (\ref{definfimum}) is achieved. In addition any minimizing sequence has, up to translation, a subsequence converging strongly in $X$.
\end{thm}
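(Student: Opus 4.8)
The plan is to run the direct method, but the essential point is that for $\ga>0$ the logarithmic kernel is \emph{confining}: it penalizes the spreading of mass, and this is exactly what restores the compactness lost to the translation invariance of $F$. First I would split the interaction into a part that cooperates with the minimization and a controllable remainder, writing $V=V_1-V_2$ with
\begin{align*}
V_1(u)&=\iint_{\R^2\times\R^2}|u(x)|^2|u(y)|^2\log(1+|x-y|)\,dx\,dy,\\
V_2(u)&=\iint_{\R^2\times\R^2}|u(x)|^2|u(y)|^2\log\Bigl(1+\tfrac{1}{|x-y|}\Bigr)\,dx\,dy,
\end{align*}
so that $V_1,V_2\ge 0$ and, since $\ga>0$, the term $\tfrac{\ga}{4}V_1\ge 0$ helps. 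The kernel of $V_2$ is integrable, so $V_2$ is dominated by a sub-quadratic power of $A$ via a lower-order Lebesgue norm (e.g. $V_2(u)\le C\|u\|_{L^{8/3}}^4$) together with Gagliardo--Nirenberg and the constraint $\|u\|_{L^2}^2=c$. For the focusing term I would use the planar Gagliardo--Nirenberg inequality (\ref{GN}), $\|u\|_{L^p}^p\le K_{GN}\|\nabla u\|_{L^2}^{\,p-2}\|u\|_{L^2}^2$: in case (i) one has $-\tfrac{a}{p}C(u)\ge 0$; in case (ii), $p-2<2$ makes $C$ a lower-order perturbation of $A$; in case (iii), $p=4$ produces $\tfrac12 A(u)-\tfrac{a}{4}K_{GN}c\,A(u)$, whose coefficient is positive precisely when $c<2/(aK_{GN})$. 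In all three cases I obtain $F(u)\ge\al A(u)-\be$ on $S(c)$ with $\al>0$, whence $m>-\infty$ and every minimizing sequence $(u_n)$ is bounded in $H^1(\R^2)$; solving for $V_1$ in $F(u_n)\to m$ and using the bounds just obtained shows that $V_1(u_n)$ is bounded as well.

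The heart of the argument — and the step I expect to be the main obstacle — is to convert the bound on $V_1$ into compactness, despite $\|\cdot\|_X$ not being translation invariant. I would first exclude vanishing directly from the bound on $V_1$: if $\sup_{y}\int_{B(y,1)}|u_n|^2\to 0$, then for any fixed $R$ the mass of $u_n$ in every ball of radius $R$ tends to $0$, so
\[
V_1(u_n)\ \ge\ \log(1+R)\iint_{|x-y|\ge R}|u_n(x)|^2|u_n(y)|^2\,dx\,dy\ \ge\ \log(1+R)\,(c^2-o(1)),
\]
which forces $V_1(u_n)\to\infty$ and contradicts the bound from Step 1. (The same computation shows that the cross energy of two pieces of mass at distance $\to\infty$ diverges, which is why dichotomy cannot occur.) Non-vanishing yields $\de>0$ and $(y_n)\subset\R^2$ with $\int_{B(y_n,1)}|u_n|^2\ge\de$; since $F$ and $V_1$ are translation invariant, $v_n:=u_n(\cdot-y_n)$ is again minimizing with $V_1(v_n)$ bounded and carries mass $\ge\de$ in $B(0,1)$. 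The key estimate is then, using $\log(1+|x-y|)\ge\log(1+|x|)-C_0$ for $y\in B(0,1)$,
\[
\de\int_{\R^2}|v_n(x)|^2\log(1+|x|)\,dx\ \le\ V_1(v_n)+C_0\,c\,\de,
\]
so that the logarithmic moment, and hence $\|v_n\|_X$, is bounded. This confinement estimate is the crucial new ingredient.

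Finally I would upgrade the $X$-bound to convergence. The bound $\int|v_n|^2\log(1+|x|)\,dx\le K$ gives tightness, because $\int_{|x|>R}|v_n|^2\le K/\log(1+R)$ uniformly in $n$; combined with the local compactness of $H^1(\R^2)\hookrightarrow L^2_{\mathrm{loc}}$ this produces a subsequence with $v_n\to v$ strongly in $L^2(\R^2)$, so $v\in S(c)$ and $v\neq0$ by the non-vanishing lower bound. Strong $L^2$ convergence together with the $H^1$-bound gives $v_n\to v$ in $L^p$, hence $C(v_n)\to C(v)$ and (the kernel of $V_2$ being mild) $V_2(v_n)\to V_2(v)$, while $A$ and $V_1$ are lower semicontinuous; therefore $F(v)\le\liminf F(v_n)=m$, and since $v\in S(c)$ we get $F(v)=m$. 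Because the coefficients of $A$ and $V_1$ in $F$ are positive, equality in the semicontinuity forces $A(v_n)\to A(v)$ and $V_1(v_n)\to V_1(v)$; the former with weak $H^1$-convergence gives strong convergence of the gradients, and tightness plus the weight bound gives $\int|v_n|^2\log(1+|x|)\to\int|v|^2\log(1+|x|)$. Hence $\|v_n\|_X\to\|v\|_X$, which with weak convergence in the Hilbert space $X$ yields $v_n\to v$ strongly in $X$, completing the proof.
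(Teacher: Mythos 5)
Your overall strategy coincides with the paper's: the same splitting $V=V_1-V_2$, the same three coercivity estimates via (\ref{GN}) and $|V_2(u)|\leq K\sqrt{A(u)}\,c^{3/2}$, the exclusion of vanishing through the divergence of $V_1$ (this is Lemma \ref{lem_vanishing_V1}), and the confinement estimate $\delta\int|v_n|^2\log(1+|x|)\,dx\leq V_1(v_n)+C_0c\delta$, which is a clean direct proof of the first half of \cite[Lemma 2.1]{CiWe} that the paper imports as a black box. Up to and including the identification $F(v)=m$ and the deduction $A(v_n)\to A(v)$, $V_1(v_n)\to V_1(v)$, your argument is correct and essentially the paper's.

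There is, however, a genuine gap in the very last step. You claim that ``tightness plus the weight bound'' yields $\int|v_n|^2\log(1+|x|)\,dx\to\int|v|^2\log(1+|x|)\,dx$. This implication is false: the tail bound $\int_{|x|>R}|v_n|^2\leq K/\log(1+R)$ together with $\sup_n\int|v_n|^2\log(1+|x|)\,dx\leq K$ and strong $L^2$ convergence does not prevent a vanishing amount of mass $\eps_n\to 0$ from sitting at distance $e^{1/\eps_n}$, which contributes a fixed positive quantity to the logarithmic moment while disappearing in the $L^2$ limit (and is compatible with the stated tail bound). What actually rules this scenario out is the convergence $V_1(v_n)\to V_1(v)$, and extracting that information requires the bilinear expansion
\begin{equation*}
B_1\bigl(v_n^2,(v_n-v)^2\bigr)=V_1(v_n)-2B_1\bigl(v_n^2,(v_n-v)v\bigr)-B_1\bigl(v_n^2,v^2\bigr),
\end{equation*}
together with $B_1(v_n^2,(v_n-v)v)\to0$ (the nontrivial estimate of \cite[Lemma 2.6]{CiWe}, i.e.\ Lemma \ref{lem_Cingo_2.6}) and Fatou's lemma for the last term; this forces $B_1(v_n^2,(v_n-v)^2)\to0$, and only then does your own confinement estimate, applied to $w_n=v_n-v$ with the mass lower bound on $v_n$ in a fixed ball, give $|v_n-v|_*\to0$ and hence strong $X$-convergence. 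This is exactly the content of the paper's Lemma \ref{V_1}; your proposal replaces it with an incorrect general principle, so the final upgrade from $H^1$- to $X$-convergence is not established as written.
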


Note that the property of convergence of the minimizing sequences insured by Theorem \ref{minimization1} provides a strong indication that the set of standing waves associated to the set of minimizers for $F$ on $S(c)$ is orbitally stable. \smallskip

In all the other cases that we shall now consider the functional $F$ will be unbounded from below on $S(c)$ and, in particular, it will not be possible to find a global minimizer. To overcome  this difficulty we shall exploit the property that $F$, restricted
to $S(c)$, possesses a natural constraint, namely a set, that we denote by $\Lambda(c)$, that contains all the critical points of $F$ restricted to $S(c)$.

Precisely, for each $u\in L^2(\R^2)$ and $t>0$, we consider the dilations
$$u^t(x): =t\;u(tx)  \quad \text{ for all } x\in \R^2,$$ 
which define an action of the group $((0,\infty),\times)$ on $S(c)$, since $||u^t||_{L^2}^2= ||u||_{L^2}^2$.
By easy computations, we also get
\begin{equation}\label{scalings}
A(u^t)=t^2\; A(u), \quad C(u^t)=t^{p-2}\;C(u) \quad \mbox{and} \quad
V(u^t)=V(u)-c^2\; \log t.
\end{equation}
Defining the {\sl fiber map} $t \in (0, \infty) \mapsto g_u(t):= F(u^t)$,  we can derive the formula
\begin{equation}\label{LinkF-Q}
\frac{d}{dt} F(u^t)=\frac{Q(u^t)}{t},
\end{equation}
where we have set
\begin{equation}\label{defQ}
Q(u):=\left. \frac{d}{dt}\right|_{t=1}F(u^t)=A(u)-a\;\frac{p-2}{p}\; C(u)-\gamma\;\frac{c^ 2}{4}.
\end{equation}

Actually the  condition $Q(u)=0$ corresponds to a Pohozaev identity and  the set 
$$
\Lambda(c) := \{u \in S(c) \  | \ Q(u) =0\} = \{u \in S(c) \ | \ g_u'(1)=0\}
$$
appears as a natural constraint. As we shall see, in Lemma \ref{coercive}, when $\gamma >0$, $F$ restricted to $\Lambda(c)$ is bounded from below.

We also recognize that for any $u \in S(c)$, the dilated function $\textcolor{blue}{u^s(x)= s u (s x)}$ 
belongs to the constraint $\Lambda(c)$ if and only if $s \in \R$ is a critical value of the fiber map $t \in (0, \infty) \mapsto g_u(t)$, namely $g'_{u} (s)=0$.  Moreover  it happens that  $g'_{u} (s) =
\textcolor{blue}{g'_{u^s}(1)}$, so that
 if $s$ is a critical point of $g_u$, then 
 $\textcolor{blue}{u^s}$ can be seen {\sl as a projection} of $u$ on the set $\Lambda(c)$.

Now setting
$$c_0=2\left[\frac{p(p-4)^{\frac{p-4}{2}}}{(p-2)^\frac{p}{2}}\frac{1}{a\gamma^{\frac{p-4}{2}}K_{GN}}\right]^ {\frac{1}{p-3}},$$ 
we show that if  $\gamma>0$, $a>0$, $p>4$, and $c <c_0$, then the  set $\Lambda(c)$ is a submanifold of $X$ of codimension $2$ and a submanifold of $S(c)$ of codimension $1$.

At this stage, is view of the  geometric profile of $g_u(s)$, and inspired by \cite{Ta}, see also \cite{So1} for a very recent applications of this idea, we are lead to decompose $\Lambda(c)$ into three disjoint subsets 
$$
\Lambda^+(c) = \{ u \in S(c) \ |\ g'_u(1)=0, \ g''_u(1) > 0 \}
$$
$$
\Lambda^-(c) = \{ u \in S(c) \ |\ g'_u(1)=0, \ g''_u(1) < 0 \}
$$
$$
\Lambda^0(c) = \{ u \in S(c) \ |\ g'_u(1)=0, \ g''_u(1) = 0 \}.
$$
Firstly we recognize that for any $u \in S(c)$, there exist an unique $s^+_u >0$ such that $u^{s^+} \in \Lambda^+(c)$ 
and an unique $s^-_u >0$ such that $u^{s^-} \in \Lambda^-(c)$. Such $s^+_u$ and $s^-_u$ are respectively strict local minimum point and strict local maximum point for $g_u$. Finally setting
$$
I^+: S(c) \to \R \quad I^+(u) = F(s^+_u u(s^+_u u(x)))
$$
and
$$
I^-: S(c) \to \R \quad I^-(u) = F(s^-_u u(s^-_u u(x))),
$$
we pass to minimize the functionals $I^\pm$ on $S(c)$, which correspond to minimize $F$ on $\Lambda^\pm(c)$.

Precisely, setting
$$\gamma^+(c) := \inf_{\Lambda^+(c)} F(u) \quad \mbox{and} \quad \gamma^-(c) := \inf_{\Lambda^-(c)} F(u),$$
we prove the following result.

\begin{thm}\label{infreachxxxx}
Let  $\gamma>0$, $a>0$, $p>4$, $c < c_0$. Then $\Lambda^0(c) = \emptyset$, while $\Lambda^\pm(c)$ are not empty and 
there exist	
	$$u^+ \in \Lambda^+(c)  \ \ \mbox{such that} \ \ F(u^+)= \gamma^+(c) \quad \mbox{and} \quad u^- \in \Lambda^-(c) \ \ \mbox{such that} \ \ F(u^-)= \gamma^-(c).$$
In addition $u^+$ and $u^-$ are critical points of $F$ restricted to $S(c)$.
\end{thm}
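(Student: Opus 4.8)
The plan is to reduce the two constrained minimizations on $\Lambda^\pm(c)$ to the study of the one-dimensional fiber maps $t\mapsto g_u(t)=F(u^t)$, to produce minimizers by a concentration--compactness argument adapted to the logarithmic kernel, and to deduce criticality from the fact that, once $\Lambda^0(c)=\emptyset$, the set $\Lambda(c)$ is a natural constraint.

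First I would determine the shape of $g_u$. From the scalings (\ref{scalings}),
$$g_u(t)=\frac{t^2}{2}A(u)-\frac{\gamma c^2}{4}\log t-\frac{a}{p}\,t^{p-2}C(u)+\frac{\gamma}{4}V(u),$$
so $g_u(t)\to+\infty$ as $t\to0^+$ (through the $-\log t$ term) while, since $p>4$ and $a,C(u)>0$, $g_u(t)\to-\infty$ as $t\to+\infty$. Writing $t\,g_u'(t)=Q(u^t)=:h(t)$, one has $h(0^+)=-\gamma c^2/4<0$, and $h$ rises to a single interior maximum before decreasing to $-\infty$; hence $g_u$ has either no or exactly two positive critical points according to the sign of $\max_t h$. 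Estimating $\max_t h$ from below through the Gagliardo--Nirenberg inequality (\ref{GN}), the dependence on $A(u)$ cancels and one is left with a lower bound depending only on $c$ that is strictly positive precisely when $c<c_0$. Thus for $c<c_0$ every $g_u$ has two simple, nondegenerate critical points $s^+_u<s^-_u$, a strict local minimum and a strict local maximum. This yields simultaneously that $\Lambda^\pm(c)$ are nonempty (they contain the projections $u^{s^\pm_u}$) and that $\Lambda^0(c)=\emptyset$; equivalently, any $u\in\Lambda^0(c)$ would have $A(u)$ and $C(u)$ rigidly fixed by $Q(u)=0$ and $g_u''(1)=0$, and feeding these values into (\ref{GN}) produces an inequality that fails for $c<c_0$.

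Since every $u\in S(c)$ projects into $\Lambda^\pm(c)$ and each element of $\Lambda^\pm(c)$ is its own projection, one has $\inf_{S(c)}I^\pm=\gamma^\pm(c)$, finite because $F$ is bounded below on $\Lambda(c)\supset\Lambda^\pm(c)$ by Lemma \ref{coercive}. Taking a minimizing sequence $(u_n)\subset\Lambda^\pm(c)$ with $F(u_n)\to\gamma^\pm(c)$, I would use $Q(u_n)=0$ to write $F(u_n)=\frac{a(p-4)}{2p}C(u_n)+\frac{\gamma c^2}{8}+\frac{\gamma}{4}V(u_n)$; splitting $V=V_1-V_2$ into its far and near logarithmic parts as in \cite{CiWe} and bounding $V_2$ by a Gagliardo--Nirenberg term $\lesssim\|\nabla u_n\|_{L^2}$, the bounded energy first forces a bound on $C(u_n)$, and then $A(u_n)=\frac{a(p-2)}{p}C(u_n)+\frac{\gamma c^2}{4}$ yields boundedness in $H^1(\R^2)$. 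The main obstacle is then compactness, since $F$ is translation invariant whereas the $X$-norm is not. Vanishing is excluded by the confining effect of the logarithmic potential: if $\sup_y\int_{B(y,1)}|u_n|^2\to0$ then $C(u_n)\to0$ and $V_1(u_n)\to+\infty$, $V_2(u_n)\to0$, so $F(u_n)\to+\infty$, a contradiction. Hence, after translating, $\tilde u_n:=u_n(\cdot-y_n)\rightharpoonup\tilde u\neq0$ in $H^1(\R^2)$, and the delicate step, which I expect to be the hardest, is to upgrade this to strong convergence in $X$ with no loss of mass: a splitting into a bounded part and a part drifting to infinity would make the cross term of $V$ large and positive, which the bounded energy forbids, but making this rigorous requires a Brezis--Lieb type decomposition for the sign-changing, unbounded kernel together with control of the log-weighted tail $\int|u|^2\log(1+|x|)$. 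Granting strong convergence, $\tilde u\in S(c)$, $Q(\tilde u)=0$, $F(\tilde u)=\gamma^\pm(c)$, and since $\Lambda^0(c)=\emptyset$ the sign of $g_{\tilde u}''(1)$ is inherited from the sequence, so $\tilde u=:u^\pm\in\Lambda^\pm(c)$ is a minimizer.

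Finally, to show that $u^\pm$ are critical points of $F$ on $S(c)$ I would use that $\Lambda^0(c)=\emptyset$ makes $\Lambda^\pm(c)$ open in the codimension-one manifold $\Lambda(c)=S(c)\cap\{Q=0\}$, so $u^\pm$ is a constrained critical point for the two constraints $\|u\|_{L^2}^2=c$ and $Q(u)=0$. By the Lagrange multiplier rule $F'(u^\pm)=\mu\,S'(u^\pm)+\nu\,Q'(u^\pm)$; pairing with the dilation vector $w=\frac{d}{dt}\big|_{t=1}(u^\pm)^t$, which is tangent to $S(c)$, gives $\langle S'(u^\pm),w\rangle=0$, $\langle F'(u^\pm),w\rangle=g_{u^\pm}'(1)=0$ and $\langle Q'(u^\pm),w\rangle=g_{u^\pm}''(1)$, whence $0=\nu\,g_{u^\pm}''(1)$. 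Since $u^\pm\in\Lambda^\pm(c)$ forces $g_{u^\pm}''(1)\neq0$, we obtain $\nu=0$, so $F'(u^\pm)=\mu\,S'(u^\pm)$ and $u^\pm$ solves (\ref{eq:59}) with Lagrange multiplier $\lambda=\mu$.
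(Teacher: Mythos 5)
Your fiber-map analysis (shape of $g_u$, the two nondegenerate critical points $s_u^+<s_u^-$ for $c<c_0$, hence $\Lambda^\pm(c)\neq\emptyset$ and $\Lambda^0(c)=\emptyset$) matches the paper's Lemmas \ref{natural-bound}, \ref{nondegenerate} and \ref{intersection}, and your boundedness and non-vanishing arguments are essentially Lemmas \ref{coercive}, \ref{lem_vanishing_V1} and \ref{X-bound}. The genuine gap is in the compactness step, which you explicitly defer (``Granting strong convergence\dots''). You work with a \emph{plain} minimizing sequence $(u_n)\subset\Lambda^\pm(c)$, and for such a sequence the weak limit $u$ only satisfies $Q(u)\le 0$ (weak lower semicontinuity of $A$ plus compactness of $C$); nothing you propose excludes a strict loss of gradient norm, i.e.\ $Q(u)<0$. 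For $\gamma^-(c)$, which is an infimum over the set of fiber \emph{maxima}, the natural projection argument does not produce a contradiction: if $Q(u)<0$ then either $1<s_u^+$ or $1>s_u^-$, and in both cases the inequalities $F(u)\le\gamma^-(c)$ and $F(u^{s_u^-})\ge\gamma^-(c)$ are perfectly compatible. This is precisely why the paper does \emph{not} minimize directly: it applies Ghoussoub's minimax principle to the reduced functionals $I^\pm$ (Lemmas \ref{ps} and \ref{psbisl}, which require the $C^1$ regularity of $u\mapsto s_u^\pm$ and the differentiation formula of Lemma \ref{transform}) to produce a \emph{Palais--Smale} sequence in $\Lambda^\pm(c)$; then the weak limit solves the Euler--Lagrange equation, the Pohozaev identity of Lemma \ref{lem_Q=0} forces $Q(u)=0$, and only this recovers $A(u_n)\to A(u)$ and strong $X$-convergence (Lemma \ref{convergence}). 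No Brezis--Lieb decomposition for the logarithmic kernel is needed or used; the missing ingredient in your argument is the almost-criticality of the sequence, not a splitting lemma.

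A secondary, patchable issue: your criticality argument pairs the Lagrange identity $F'(u^\pm)=\mu S'(u^\pm)+\nu Q'(u^\pm)$ with the dilation vector $w=\frac{d}{dt}\big|_{t=1}(u^\pm)^t=u^\pm+x\cdot\nabla u^\pm$, but this $w$ need not belong to $X$ (nor even to $H^1(\R^2)$) for a general element of $X$, so it is not an admissible test direction. The paper's route is to read the two-multiplier identity as an elliptic equation and apply the Pohozaev identity of Lemma \ref{lem_Q=0} to it, which yields $\nu\,g''_{u^\pm}(1)=0$ rigorously (this is exactly the computation in the proof of Theorem \ref{lem_T(c)sousvariete234}); in the paper's actual proof of Theorem \ref{infreachxxxx} criticality is in fact obtained directly from Lemma \ref{convergence}, since the limit of a Palais--Smale sequence is automatically a constrained critical point.
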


We remark that the first solution $u^+$, which appears in Theorem \ref{infreachxxxx} as a global  minimizer of $F$ restricted to  $\Lambda(c)$, can also be characterized as a local minimizer of $F$ on the set $S(c) \cap A_{k_0}$ where
$$A_{k_0}= \{ u \in X \,| \, A(u) \leq k_0\} \quad \mbox{where} \quad k_0 = \frac{(p-2)}{(p-4)}\frac{\gamma c^2}{4},$$
see Theorem \ref{minimization local}. Also the second solution $u^-$ corresponds to a critical point of mountain-pass type for $F$ on $S(c).$ The existence of two critical points on $S(c)$, one being a local minimizer and the second one of mountain-pass type is reminiscent of recent works \cite{BJ2, GoJe, JeLuWa, So1} where a similar structure have been observed for prescribed norm problems. \smallskip

Regarding the existence of more than two solutions we derive the following result.

\begin{thm} \label{double infinity}
Let $\gamma >0$, $a>0$, $p>4$ and $c < c_0$. Then $F$ constrained to $S(c)$ possess an infinity of critical points lying on $\Lambda^+(c)$ and an infinity of critical points lying on $\Lambda^-(c)$. These critical points correspond to radially symmetric functions.
\end{thm}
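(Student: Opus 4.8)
The plan is to exploit the $\Z_2$-symmetry of $F$ (it is even, since $A$, $V$, $C$ depend only on $|u|$) together with min-max theory based on the Krasnoselskii genus, after restoring compactness by passing to radially symmetric functions. Throughout, let $X_r := X \cap H^1_{\mathrm{rad}}(\R^2)$ denote the subspace of radial functions. Since $A$, $V$, $C$, and hence $F$ and the constraints $S(c)$, $\Lambda(c)$, are invariant under the natural action of $O(2)$ on $X$, the principle of symmetric criticality guarantees that any critical point of $F$ restricted to $S(c)\cap X_r$ is in fact a critical point of $F$ restricted to $S(c)$; such points are automatically radial. It therefore suffices to produce infinitely many critical points of $F|_{S(c)\cap X_r}$ lying on $\Lambda^+(c)$ and infinitely many lying on $\Lambda^-(c)$.

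First I would transport the problem to the sphere $S(c)\cap X_r$ by means of the fiber projections. As in the proof of Theorem~\ref{infreachxxxx}, for each $u \in S(c)\cap X_r$ the fiber map $g_u$ has a unique strict local minimum at $s_u^+$ and a unique strict local maximum at $s_u^-$; since $\Lambda^0(c)=\emptyset$ these are nondegenerate, i.e. $g_u''(s_u^\pm)\neq 0$, so the implicit function theorem applied to $Q(u^t)=0$ shows that $u \mapsto s_u^\pm$ is $C^1$, and hence the functionals $I^\pm(u)=F(u^{s_u^\pm})$ are even and $C^1$ on $S(c)\cap X_r$. The maps $u \mapsto u^{s_u^\pm}$ are odd homeomorphisms of $S(c)\cap X_r$ onto $\Lambda^\pm(c)\cap X_r$ that carry critical points of $I^\pm$ to critical points of $F|_{\Lambda^\pm(c)}$, which are critical points of $F|_{S(c)}$ because $\Lambda(c)$ is a natural constraint. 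Thus the task reduces to finding infinitely many critical points of each even functional $I^\pm$ on $S(c)\cap X_r$. Note moreover that, by Lemma~\ref{coercive}, $F$ is bounded from below on $\Lambda(c)$, so $I^\pm$ is bounded from below on $S(c)\cap X_r$ with $\inf I^\pm = \gamma^\pm(c)$.

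Next I would verify the Palais--Smale condition for $I^\pm$ on $S(c)\cap X_r$. A Palais--Smale sequence for $I^\pm$ yields, through the projection, a sequence $(v_n)\subset \Lambda^\pm(c)\cap X_r$ which is Palais--Smale for $F$ constrained to $S(c)$; the coercivity provided by Lemma~\ref{coercive} bounds $(v_n)$ in $X$, and the associated Lagrange multipliers $\lambda_n$ remain bounded. Here the compact embedding $X_r \hookrightarrow L^p(\R^2)$, valid for $p>2$, is decisive: it passes the term $C$ to the limit, and, after splitting $V$ into the part governed by $\log(1+|x-y|)$ (controlled by the $X_r$-norm) and the remaining bounded contribution, it allows one to upgrade weak convergence to strong convergence in $X_r$. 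This is precisely where restricting to radial functions pays off, since it removes the translation non-invariance of the $X$-norm that obstructed compactness in the general, non-radial setting.

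Finally, with $I^\pm$ even, $C^1$, bounded from below, and satisfying Palais--Smale on the complete symmetric Hilbert manifold $S(c)\cap X_r$, which carries the free $\Z_2$-action $u\mapsto -u$, I would run the standard genus-based min-max. As $S(c)\cap X_r$ is an infinite-dimensional sphere, for every $k$ it contains compact symmetric subsets of Krasnoselskii genus at least $k$ (for instance, intersections of $k$-dimensional subspaces of $X_r$ with $S(c)$), so the levels
$$
c_k^\pm := \inf_{\mathrm{gen}(\mathcal{A})\geq k}\ \sup_{u\in\mathcal{A}} I^\pm(u), \qquad \mathcal{A}\subset S(c)\cap X_r \text{ compact, symmetric},
$$
are finite and nondecreasing in $k$. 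An equivariant deformation argument using an odd pseudo-gradient flow tangent to $S(c)\cap X_r$, combined with the Palais--Smale property, shows each $c_k^\pm$ is a critical value of $I^\pm$. Since there are infinitely many such levels, either they take infinitely many distinct values, or some value is attained by infinitely many of them, forcing the corresponding critical set to have infinite genus; in both cases $I^\pm$ possesses infinitely many critical points. These translate into infinitely many radial critical points of $F|_{S(c)}$ on $\Lambda^+(c)$ and on $\Lambda^-(c)$, as claimed. I expect the principal obstacle to be the verification of the Palais--Smale condition: the sign-changing, unbounded logarithmic kernel makes the convergence of the nonlocal energy $V$ along bounded sequences delicate, and one must combine the compactness of the radial embedding with the fine structure of $\Lambda^\pm(c)$ to rule out loss of compactness and to control the Lagrange multiplier.
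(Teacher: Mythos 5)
Your proposal is correct and follows essentially the same route as the paper: restriction to the radial subspace to recover compactness, the odd $C^1$ projections $u\mapsto u^{s_u^\pm}$ to transport genus from spheres of finite-dimensional subspaces onto $\Lambda^\pm(c)$, coercivity of $F$ on $\Lambda(c)$ to bound the Palais--Smale sequences, and the genus-based min-max levels $\beta_k$. The only cosmetic difference is that the paper obtains ``free'' Palais--Smale sequences for $F$ on $S(c)$ directly via Ghoussoub's equivariant minimax principle applied on $\Lambda^\pm_{rad}(c)$ (so it never needs the symmetric criticality principle nor the full Palais--Smale condition for $I^\pm$), whereas you run the deformation for $I^\pm$ inside $S(c)\cap X_r$ and invoke symmetric criticality at the end; both are legitimate.
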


Next we consider the case  $\gamma <0$ which appears more involved than the case $\gamma >0$. Note in particular that when $\gamma <0$ and for $\lambda >0$ fixed, there are still no results of existence or non-existence of solutions to (\ref{eq:56}) set on $\R^2$. \smallskip

Firstly, we notice that if $a \leq 0$ and $p>2$,  for each $u \in S(c)$ the  {\sl fiber map}
$g_u(t):= F(u^t)$ is strictly increasing and so we can state the following non-existence result.

\begin{thm} \label{nonexistence}
	Let $\gamma <0$, $a\leq 0$ and $p>2$. Then $F$ do not has critical points on $S(c)$.
\end{thm}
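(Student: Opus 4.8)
The plan is to show that for every $u \in S(c)$ the fiber map $g_u(t) = F(u^t)$ is strictly increasing on $(0,\infty)$, whence it has no critical point; combined with the fact that $\Lambda(c)$ is a natural constraint for $F$ on $S(c)$, this forces $F$ restricted to $S(c)$ to have no critical point at all.

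First I would differentiate $g_u$ using the scaling identities (\ref{scalings}). Writing
$$
g_u(t) = \frac{1}{2}t^2 A(u) + \frac{\gamma}{4}\bigl(V(u) - c^2\log t\bigr) - \frac{a}{p}t^{p-2}C(u)
$$
and differentiating gives
$$
g_u'(t) = t\,A(u) - \frac{\gamma c^2}{4t} - a\,\frac{p-2}{p}\,t^{p-3}C(u), \qquad t>0.
$$
I would then check that each term on the right-hand side is nonnegative under the hypotheses, with at least one strictly positive. Since $u \in S(c)$ with $c>0$ we have $u \neq 0$, so $A(u) > 0$ (a function in $H^1(\R^2)$ with vanishing gradient is constant, hence zero in $L^2$); therefore $t\,A(u) > 0$. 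Because $\gamma < 0$ the second term equals $(-\gamma)c^2/(4t) > 0$, and because $a \leq 0$ and $p>2$ the third term equals $(-a)\frac{p-2}{p}t^{p-3}C(u) \geq 0$. Hence $g_u'(t) > 0$ for all $t>0$ and all $u \in S(c)$, so $g_u$ is strictly increasing, exactly as announced in the remark preceding the statement.

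It remains to convert this monotonicity into nonexistence. I would use that for any $u \in S(c)$ the dilation curve $t \mapsto u^t$ lies entirely in $S(c)$ (since $\|u^t\|_{L^2}^2 = \|u\|_{L^2}^2 = c$) and passes through $u$ at $t=1$. If $u$ were a critical point of $F$ restricted to $S(c)$, then the derivative of $F$ along this admissible curve would vanish at $t=1$, that is $g_u'(1) = Q(u) = 0$; equivalently $u \in \Lambda(c)$. But the previous step gives $g_u'(1) > 0$, a contradiction. Therefore $F$ possesses no critical point on $S(c)$.

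There is essentially no analytic obstacle here: the argument reduces to a sign computation on the one-parameter family $u^t$. The only points deserving a word of care are the strict positivity $A(u)>0$ for $u \in S(c)$, and the observation that the velocity of the curve $u^t$ at $t=1$ is a genuine tangent vector to $S(c)$ at $u$, so that criticality of $u$ on $S(c)$ forces $g_u'(1)=0$. This last point is precisely the natural-constraint property of $\Lambda(c)$ recorded earlier in the excerpt, so I would simply invoke it rather than re-derive the associated Pohozaev identity.
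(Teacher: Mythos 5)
Your proposal is correct and follows essentially the same route as the paper: under these hypotheses every term in $g_u'(t)=t\,A(u)-\frac{\gamma c^2}{4t}-a\frac{p-2}{p}t^{p-3}C(u)$ is nonnegative and the middle one is strictly positive (since $\gamma<0$), so $Q(u)=g_u'(1)>0$ for every $u\in S(c)$, while any constrained critical point must satisfy $Q(u)=0$. One caution about your closing justification: the rigorous reason a critical point satisfies $Q(u)=0$ is \emph{not} that the velocity of $t\mapsto u^t$ at $t=1$ is a genuine tangent vector to $S(c)$ --- that velocity is formally $u+x\cdot\nabla u$, which need not belong to $X$, or even to $L^2(\R^2)$, for a general $u\in X$, so the dilation curve is not $C^1$ into $X$ and one cannot differentiate $F$ along it by the chain rule. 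The correct justification is Lemma \ref{lem_Q=0}: a critical point of $F|_{S(c)}$ solves the Euler--Lagrange equation with a Lagrange multiplier, and the Pohozaev identity (established there through elliptic regularity and integration by parts on balls, precisely because the naive curve argument fails) yields $Q(u)=0$. Since you state that you would invoke this established natural-constraint property rather than rely on the curve heuristic, your proof is complete and matches the paper's.
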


\smallskip
Concentrating now on the case $\gamma <0$, $a >0$ and $p<4$ we observe that for  $K_1 \in \R$ given by
$$K_1=\frac{1}{2^\frac{4-p}{2}}\frac{1}{K_{GN}}\;\frac{p}{2^{3-p}(p-2)^{\frac{p}{2}}(4-p)^\frac{4-p}{2}}$$
we have, see Lemma \ref{fuori},
$$\Lambda(c) \neq \emptyset \quad \quad \mbox{if and only if} \quad  \quad a \geq K_1\; \gamma^\frac{4-p}{2}\; c^{3-p},$$
and, see Lemma \ref{unboundedbelow},
$$\inf_{\Lambda(c)} F(u) = - \infty, \quad \mbox{if} \quad a> K_1\; \gamma^\frac{4-p}{2}\; c^{3-p}.$$
However, see Lemma \ref{coercive2},
$$
\sup_{\Lambda(c)} F(u) <\infty, \quad \mbox{if} \quad a \geq K_1\; \gamma^\frac{4-p}{2}\; c^{3-p}
$$
and, setting $K_2=2^\frac{4-p}{2}\; K_1$, we are able to show that 
$\Lambda(c)$ is a submanifold, of class $C^1$, of codimension $2$ of $X$ and a submanifold of codimension 
$1$ in $S(c)$ if 
$K_1\; \gamma^\frac{4-p}{2}\; c^{3-p}\leq a< K_2\; \gamma^\frac{4-p}{2}\; c^{3-p}$ and then that 
$\sup_{\Lambda(c)} F(u) $ is achieved by a critical point of $F$ to $S(c)$ (see Theorem \ref{lem_T(c)sousvariete234}).

In the aim to find more than one solution, we may now try to follow the approach, relying on the decomposition of the natural constraint 
 $\Lambda(c)$  into three disjoint subsets 
$\Lambda^+(c)$, $\Lambda^0(c)$ and  $\Lambda^-(c)$, developed when $\gamma >0$. At this point we face a new difficulty. For any choice of $a$ and $c$ there always exists a $u \in S(c)$ such that $u^t \not \in \Lambda(c)$ for any $t>0$. Namely an arbitrary $u \in S(c)$ cannot always be projected on $\Lambda(c)$.\smallskip

To overcome this problem our idea is, roughly speaking,  to introduce an open subset $V$ of $S(c)$, such that for any $u \in V$  the dilation  $u^t \in V$ for any $t>0$ and there exists an unique $s^-_u >0$  such that $u^{s^-_u} \in \Lambda^-(c)$ and an unique $s^+_u >0$  such that $u^{s^+_u} \in \Lambda^+(c)$. 
Such values $u^{s^-_u}$ and $u^{s^+}$ are respectively strict local maximum and strict local minimum point of $g_u$. This geometry holds as soon as $a > K_1 \gamma^{\frac{4-p}{2}} c^{3-p}$ and it makes sense to define the functionals
$$
I^+: V \to \R \quad I^+(u) = F(s^+_u u(s^+_u u(x))),
$$
$$
I^-: V \to \R \quad I^-(u) = F(s^-_u u(s^-_u u(x)))
$$
and to try to maximize the functionals $I^\pm$ on $V$, which correspond to maximize $F$ on $\Lambda^\pm(c) \cap V$.

However, since $V$ has a boundary, we need to insure that our deformation arguments take place inside $V$. The additional condition $ a < K_2 \gamma^{\frac{4-p}{2}} c^{3-p}$ insures that $\Lambda(c) \subset V$ and that the superlevels of $I^\pm$ are complete. Actually we show that if $v_n \to v_0 \in \partial V$ strongly in $X$, then $I^\pm(v_n) \to - \infty$, see Lemma \ref{boundary}. At this point setting
$$\gamma^{+}(c) := \sup_{\Lambda^{+}(c)} F(u) \quad \mbox{and} \quad \gamma^-(c) := \sup_{\Lambda^-(c)} F(u)$$
we are able to prove the following result.
\begin{thm}\label{infreach233}
Assume that $\gamma <0$ and $p<4$. For
$ \displaystyle K_1\; \gamma^\frac{4-p}{2}\; c^{3-p} < a < K_2\; \gamma^\frac{4-p}{2}\; c^{3-p} $
there exist	
$$u^- \in  \Lambda^-(c)  \ \ \mbox{such that} \ \ F(u^-)= \gamma^-(c) \ \ \mbox{and} \ \ u^+ \in \Lambda^+(c) \ \ \mbox{such that} \ \ F(u^+)= \gamma^+(c).$$
In addition $u^-$ and $u^+$ are critical points of $F$ restricted to $S(c)$.
\end{thm}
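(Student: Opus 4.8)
The plan is to obtain both $u^-$ and $u^+$ as maximizers, treating the two cases simultaneously through the functionals $I^\pm$ introduced above. Since $K_1\gamma^{\frac{4-p}{2}}c^{3-p}<a<K_2\gamma^{\frac{4-p}{2}}c^{3-p}$, Lemma \ref{coercive2} gives $\sup_{\Lambda(c)}F<\infty$, so $\gamma^\pm(c)$ are finite, and the shape of the fiber map (which runs from $-\infty$ at $t=0^+$ up to a strict local maximum at $s^-_u$, down to a strict local minimum at $s^+_u$, and then to $+\infty$) forces $\gamma^-(c)\ge\gamma^+(c)$. Because the strict inequality $a>K_1\gamma^{\frac{4-p}{2}}c^{3-p}$ yields two nondegenerate critical points of $g_u$, one has $\Lambda^0(c)=\emptyset$, so the $C^1$ manifold $\Lambda(c)$ of Theorem \ref{lem_T(c)sousvariete234} splits into the two open pieces $\Lambda^\pm(c)$. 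Since every critical point of $I^\pm$ on $V$ produces, via the projection $u\mapsto u^{s^\pm_u}$, a critical point of $F$ restricted to $S(c)$ lying on $\Lambda^\pm(c)$, it suffices to show that $I^\pm$ attains $\gamma^\pm(c)$ on $V$.

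Next I would manufacture a Palais--Smale sequence. As $V$ is open with a boundary, the point is to keep maximizing sequences away from $\partial V$: by Lemma \ref{boundary}, $I^\pm(v_n)\to-\infty$ whenever $v_n\to v_0\in\partial V$ strongly in $X$, so any sequence with $I^\pm(v_n)\to\gamma^\pm(c)$ lies in a superlevel set at positive distance from $\partial V$. On such a complete set I would apply Ekeland's principle to $-I^\pm$, producing $(v_n)\subset V$ with $I^\pm(v_n)\to\gamma^\pm(c)$ and $(I^\pm)'(v_n)\to0$ in the cotangent space of $S(c)$. Setting $w_n:=v_n^{s^\pm_{v_n}}\in\Lambda^\pm(c)$ and using that $\Lambda(c)$ is a natural constraint, this yields a Palais--Smale sequence for $F$ restricted to $S(c)$ at level $\gamma^\pm(c)$ with $Q(w_n)=0$, together with Lagrange multipliers $\lambda_n\in\R$ for which $-\Delta w_n+\gamma(\log|\cdot|*|w_n|^2)w_n-a|w_n|^{p-2}w_n-\lambda_n w_n\to0$.

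The heart of the argument is the compactness of $(w_n)$. From $Q(w_n)=0$ one reads $A(w_n)=\frac{a(p-2)}{p}C(w_n)+\frac{\gamma c^2}{4}$, which, as $\gamma<0$, bounds $C(w_n)$ from below and rules out vanishing; together with the bound on $F(w_n)$ and the Cingolani--Weth splitting $V=V_1-V_2$ (with $V_1\ge0$ confining and translation invariant, and $V_2\ge0$ controlled by subcritical $L^q$--norms, see \cite{CiWe}) this bounds $A(w_n)$ and $V_1(w_n)$. A Lions-type lemma then supplies translations $y_n$ with $\int_{B(y_n,1)}|w_n|^2\ge\delta>0$; by translation invariance of $F$, $A$ and $V_1$, the recentred sequence $\tilde w_n:=w_n(\cdot-y_n)$ is still Palais--Smale and, via the inequality bounding the weighted norm by $V_1$ after centering, is bounded in $X$. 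Along a subsequence $\tilde w_n\weakto w_0\ne0$ in $X$; after checking that $(\lambda_n)$ is bounded with $\lambda_n\to\lambda_0$, one passes to the limit so that $w_0$ solves the Euler--Lagrange equation with multiplier $\lambda_0$. A Brezis--Lieb analysis of $C$ and of both pieces of $V$, combined with the Pohozaev identities for $w_0$ and $\tilde w_n$, excludes loss of mass, giving $\|w_0\|_{L^2}^2=c$ and upgrading the convergence to $\tilde w_n\to w_0$ strongly in $X$.

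Finally, strong convergence transfers $Q(\tilde w_n)=0$ and $F(\tilde w_n)\to\gamma^\pm(c)$ to $Q(w_0)=0$ and $F(w_0)=\gamma^\pm(c)$; since $\Lambda^0(c)=\emptyset$ and the sign of $g''_{w_0}(1)$ is preserved in the limit, $w_0\in\Lambda^\pm(c)$, furnishing the maximizers $u^-$ and $u^+$, each a critical point of $F$ restricted to $S(c)$. The main obstacle, as flagged, is precisely this compactness: $F$ is translation invariant while the weighted space $X$ is not, and the logarithmic kernel is sign-changing and unbounded both above and below, so $V$ fails to be weakly continuous. Overcoming it rests on the $V_1$/$V_2$ decomposition to regain control of the weighted norm after recentering, and on the strict inequalities $K_1<a<K_2$, which simultaneously confine the relevant superlevel sets to the interior of $V$ (through Lemma \ref{boundary}) and, via the Pohozaev constraint, forbid dichotomy.
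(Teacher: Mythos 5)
Your proposal is correct and follows essentially the same route as the paper: localization to the dilation-invariant open set $V$, the projected functionals $I^\pm$, Lemma \ref{boundary} to make the superlevel sets of $I^\pm$ complete, a Palais-Smale sequence on $\Lambda^{\pm}(c)$ transferred from $I^\pm$ to $F$ via the maps $\psi \mapsto \psi^{s_u}$, and compactness through the $V_1$/$V_2$ splitting together with the Pohozaev identity $Q=0$ for the weak limit. The only cosmetic differences are that you invoke Ekeland's principle on the complete superlevel sets where the paper applies Ghoussoub's minimax principle to the homotopy-stable family of singletons (interchangeable for a global supremum), and that your compactness discussion (Lions lemma, Brezis-Lieb, dichotomy) is heavier than the paper's Lemma \ref{convergence}, since the compact embedding $X \hookrightarrow L^2(\R^2)$ already rules out loss of mass once boundedness in $X$ is secured.
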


We remark that the case $\gamma <0$, $a>0$ and $p>4$ seems completely open. Under these assumptions, the geometric picture is somehow simpler than when $p <4$, in particular for any $u \in S(c)$ there exists a unique $t>0$ such that $u^t \in \Lambda(c)$ but what is unclear is how to identify a possible minimax level. 

We end this introduction by mentioning that in the case $\gamma <0$ the existence of more than two solutions remains an open, challenging  problem.
\medskip

The paper is organized as follows. In Section \ref{Preliminary results}, we establish some preliminaries. Section \ref{gammapositif} is devoted the case $\gamma >0$. In Subsection \ref{subsection31} we give the proof of Theorem \ref{minimization1} and in Subsection \ref{subsection32} the one of Theorem \ref{minimization local}. Subsections \ref{subsection33} and \ref{subsection34} are  devoted to the proofs of Theorems \ref{infreachxxxx} and \ref{double infinity} respectively. Section \ref{gammanegatif} deals with the case where $\gamma <0$. In Subsection \ref{propertiesLambda} we derive some properties of $\Lambda(c)$. In Subsection \ref{Onesolution} we give the proof of Theorem \ref{lem_T(c)sousvariete234} and in Subsection \ref{Twosolution} the one of Theorem \ref{infreach233}.

\begin{ack}
{\rm S. Cingolani is  member of the Gruppo Nazionale per l'Analisi Matematica, la Probabilit\`{a} e le loro
	Applicazioni (GNAMPA) of the Istituto Nazionale di Alta Matematica (INdAM).
	 This work has been carried out in the framework of the Project NONLOCAL (ANR-14-CE25-0013), funded by the French National Research Agency.}
\end{ack}

\begin{notation}
In this paper we denote for any $1\leq p<\infty,$ by $L^p(\R^2)$  the usual Lebesgue space with norm
$\|u\|_p^p := \int_{\R^2}|u|^p\,dx,$
and by $H$ the usual Sobolev space $H^1(\R^2)$ endowed with the norm
$\|u\|^2 := \int_{\R^2}|\nabla u|^2+|u|^2 \, dx.$
We denote by $\rightarrow$ and $\rightharpoonup$ the strong convergence and the weak convergence, respectively. We shall write $\varlimsup$ for $\limsup$ and $\varliminf$ for $\liminf$.
\end{notation}

\section{Preliminary results} \label{Preliminary results}

In this section we present various preliminary results. When it is not specified they are assumed to hold for any $\gamma \in \R$, $a \in \R$, $p >2$ and any $c>0.$
\medskip

As already indicated, following \cite{CiWe,Stubbe}, we shall work in the Hilbert space
$$
X:= \{u \in H^1(\R^2)\::\: |u|_*^2 < \infty\} $$
where
$$
|u|_*^2:=  \int_{\R^2} \log(1+|x|)u^2(x)\,dx,
$$ with $X$ endowed with the norm given by 
$ \|u\|_X^2:= \|u\|^2+ |u|_*^2.
$
As in \cite{CiWe} we introduce the symmetric bilinear forms
\begin{align*}
(u,v) & \mapsto B_1(u,v)= \int_{\R^2} \int_{\R^2}
\log
(1+|x-y|) u(x)v(y)\,dx dy,\\
(u,v) & \mapsto B_2(u,v)= \int_{\R^2} \int_{\R^2}
\log \Bigl(1+\frac{1}{|x-y|}\Bigr) u(x)v(y)\,dx dy,\\
(u,v)& \mapsto B_0(u,v)=B_1(u,v)-B_2(u,v)= \int_{\R^2} \int_{\R^2}
\log(|x-y|)  u(x)v(y)\,dx dy,
\end{align*}
and we define on $X$ the associated functionals
\begin{align*}
V_1(u)=B_1(u^2,u^2)= \int_{\R^2} \int_{\R^2}
\log
(1+|x-y|) u^2(x)u^2(y)\,dx dy,\\
V_2(u)=B_2(u^2,u^2)= \int_{\R^2} \int_{\R^2}
\log
\Bigl(1+\frac{1}{|x-y|}\Bigr) u^2(x)u^2(y)\,dx dy.
\end{align*}
Note that $V(u) = V_1(u) - V_2(u)$. 
\smallskip
We shall  use the following results from \cite{CiWe}.

\begin{lem}\cite[Lemma 2.2]{CiWe}
\label{sec:comp-cond}
$ $
\begin{itemize}
\item[(i)] The space $X$ is compactly embedded in $L^s(\R^2)$ for all
  $s \in [2,\infty)$.
\item[(ii)] The functionals $V,V_1,V_2$ and $F$ are of class $C^1$
  on $X$.\\ Moreover, $V_i'(u)v = 4 B_i(u^2,uv)$ for $u,v \in X$ and $i=1,2$.
 \item[(iii)] $V_2$ is continuous (in fact continuously
   differentiable) on $L^{\frac{8}{3}}(\R^2)$.
\item[(iv)] $V_1$ is weakly lower semicontinuous on $H^1(\R^2)$.
\end{itemize}
\end{lem}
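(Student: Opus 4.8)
The plan is to isolate two elementary kernel estimates and let everything follow from them. For $B_1$ the key fact is the subadditivity bound $\log(1+|x-y|)\le \log(1+|x|)+\log(1+|y|)$, valid for all $x,y\in\R^2$ since $1+|x-y|\le(1+|x|)(1+|y|)$. For $B_2$ the key fact is the pointwise bound $0\le\log\!\bigl(1+\tfrac{1}{|x-y|}\bigr)\le\tfrac{1}{|x-y|}$ together with the Hardy--Littlewood--Sobolev inequality in $\R^2$ with exponent $\lambda=1$, namely $\iint|x-y|^{-1}f(x)g(y)\,dx\,dy\le C\|f\|_{4/3}\|g\|_{4/3}$. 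Testing the first against $u^2(x)u^2(y)$ gives $0\le V_1(u)\le 2\,|u|_*^2\,\|u\|_2^2$, so $V_1$ is finite on $X$; the second gives $0\le V_2(u)\le C\|u^2\|_{4/3}^2=C\|u\|_{8/3}^4$, so $V_2$ is finite and controlled purely by the $L^{8/3}$-norm. These two inequalities carry the entire lemma.

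For (i), I would first recall that $X\hookrightarrow H^1(\R^2)\hookrightarrow L^s(\R^2)$ continuously for every $s\in[2,\infty)$. Given a bounded sequence $(u_n)\subset X$, boundedness in $H^1$ yields a subsequence with $u_n\weakto u$ in $H^1$ and $u_n\to u$ in $L^s_{loc}$ for all such $s$. The weight then supplies tightness: for every $R>0$,
$$\int_{|x|\ge R}u_n^2\,dx\le \frac{1}{\log(1+R)}\int_{\R^2}\log(1+|x|)u_n^2\,dx\le\frac{C}{\log(1+R)},$$
uniformly in $n$, so the $L^2$-mass escaping to infinity is uniformly small; combined with local $L^2$-convergence this gives $u_n\to u$ strongly in $L^2(\R^2)$. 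For $s>2$ I would interpolate, $\|u_n-u\|_s\le\|u_n-u\|_2^{\theta}\|u_n-u\|_r^{1-\theta}$ with $r>s$, using the $H^1$-bound on the $L^r$-factor, to upgrade to strong $L^s$-convergence.

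For (ii) and (iii), I treat $V_1$ and $V_2$ as restrictions of the symmetric bilinear forms $B_1,B_2$ to the diagonal $u^2$. Expanding $(u+tv)^2=u^2+2tuv+t^2v^2$ and using bilinearity and symmetry, the Gateaux derivative is $V_i'(u)v=4B_i(u^2,uv)$. To promote this to $C^1$ I must bound these derivatives: for $B_2$, the pointwise bound and HLS plus H\"older give $|B_2(u^2,uv)|\le C\|u\|_{8/3}^3\|v\|_{8/3}$, which already yields continuity and differentiability of $V_2$ with respect to the $L^{8/3}$-norm, proving (iii); for $B_1$, splitting the kernel and using Cauchy--Schwarz against the weight gives $|B_1(u^2,uv)|\le |u|_*^2\|u\|_2\|v\|_2+\|u\|_2^2\,|u|_*\,|v|_*\le C\|u\|_X^3\|v\|_X$. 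Continuity of $u\mapsto V_i'(u)$ in the operator norm follows from the same multilinear estimates together with the (compact) embeddings of part (i). Since $A$ is a smooth quadratic form and $C$ is $C^1$ on $L^p\supset X$ by the standard Nemytskii argument, $F=\tfrac12A+\tfrac\gamma4 V-\tfrac ap C$ is $C^1$ on $X$.

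For (iv), if $u_n\weakto u$ in $H^1$ then Rellich gives $u_n\to u$ in $L^2_{loc}$, hence $u_n\to u$ a.e. along a subsequence, so $u_n^2(x)u_n^2(y)\to u^2(x)u^2(y)$ a.e.\ on $\R^2\times\R^2$; since the kernel $\log(1+|x-y|)$ is nonnegative, Fatou's lemma applied to a subsequence realizing the liminf gives $V_1(u)\le\varliminf_n V_1(u_n)$. I expect the main obstacle to be step (ii): establishing genuine Fr\'echet differentiability and norm-continuity of $V_1'$ on $X$, since the weighted norm $|\cdot|_*$ does not interact as cleanly with the product structure as the $L^{8/3}$-scale does for $V_2$, and the remainder terms must be controlled simultaneously in the $H^1$- and weighted parts of $\|\cdot\|_X$.
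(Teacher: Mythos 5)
Your proposal is correct, but note that the paper itself gives no proof of this statement: it is imported verbatim as \cite[Lemma 2.2]{CiWe}, so there is no in-paper argument to compare against. Your reconstruction rests on exactly the two kernel estimates that drive the proof in that reference --- the subadditivity bound $\log(1+|x-y|)\le\log(1+|x|)+\log(1+|y|)$ for $B_1$ and the Hardy--Littlewood--Sobolev bound $V_2(u)\le C\|u\|_{8/3}^4$ for $B_2$ (the latter being precisely inequality (2.2) of \cite{CiWe} quoted later in Section 2) --- and the resulting arguments for (i)--(iv), including the tail estimate $\int_{|x|\ge R}u_n^2\le C/\log(1+R)$ for compactness and the quadrilinear bound $|B_1(u^2,uv)|\le|u|_*^2\|u\|_2\|v\|_2+\|u\|_2^2|u|_*|v|_*$ for the $C^1$ property, are sound and standard.
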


\begin{lem}\cite[Lemma 2.1]{CiWe}
\label{lem_Cingo_2.1}
Let $(u_n)$ be a sequence in $L^2(\R^2)$ such that $u_n \overset{L^2(\R^2)}{\longrightarrow} u\neq 0$ pointwise a.e. on $\R^2$. Moreover, let
$(v_n)$ be a bounded sequence in $L^2(\R^2)$ such that
\begin{equation}
  \label{eq:19}
\sup \limits_{n \in \N} B_1(u_n^2,v_n^2)<\infty.
\end{equation}
Then there exists $n_0 \in \N$ and $C>0$ such that $|v_n|_*<C$ for $n \ge n_0$.\\
If, moreover,
\begin{equation}
  \label{eq:18}
B_1(u_n^2,v_n^2) \to 0 \quad \text{and}\quad \|v_n \|_2 \to 0\qquad \text{as $n \to \infty$,}
\end{equation}
then
\begin{equation}
  \label{eq:14}
|v_n|_* \to 0 \qquad \text{as $n \to \infty$.}
\end{equation}
\end{lem}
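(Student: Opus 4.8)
The plan is to reduce everything to a single master inequality that bounds $|v_n|_*^2$ in terms of $B_1(u_n^2,v_n^2)$ and $\|v_n\|_2^2$, and then to read off both assertions from it. The mechanism is the elementary observation that $\log(1+\cdot)$ is subadditive along the triangle inequality: since $|y|\le |x|+|x-y|$ and $(1+a)(1+b)\ge 1+a+b$ for $a,b\ge 0$, one has, for all $x,y\in\R^2$,
$$\log(1+|y|)\ \le\ \log(1+|x|)+\log(1+|x-y|).$$

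The hard part is that $u_n$ is assumed only to lie in $L^2(\R^2)$, so the weighted quantity $|u_n|_*$ need not be finite and one cannot simply integrate the displayed inequality against the full weight $u_n^2(x)$. To circumvent this I would localize the $x$-variable to a fixed ball $B_R=\{x\in\R^2:|x|\le R\}$, on which $\log(1+|x|)\le \log(1+R)$. Because $u_n\to u\neq 0$ in $L^2(\R^2)$, one may first fix $R$ so large that $\int_{B_R}u^2=:2\delta_0>0$; then, since $L^2$-convergence gives $\int_{B_R}u_n^2\to\int_{B_R}u^2$, we obtain the crucial uniform lower bound $\int_{B_R}u_n^2\ge\delta_0>0$ for all $n\ge n_0$.

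Multiplying the displayed inequality by $u_n^2(x)v_n^2(y)\ge 0$, integrating over $(x,y)\in B_R\times\R^2$, bounding the first resulting term by $\log(1+R)\,\|u_n\|_2^2\,\|v_n\|_2^2$, and using the nonnegativity of $\log(1+|x-y|)$ to enlarge the $x$-domain from $B_R$ to $\R^2$ in the last term, I arrive at
$$\Bigl(\int_{B_R}u_n^2\Bigr)\,|v_n|_*^2\ \le\ \log(1+R)\,\|u_n\|_2^2\,\|v_n\|_2^2\ +\ B_1(u_n^2,v_n^2).$$
Dividing by $\int_{B_R}u_n^2\ge\delta_0$ for $n\ge n_0$ and recalling that $\|u_n\|_2$ is bounded (it converges to $\|u\|_2$) yields the master bound $|v_n|_*^2\le \delta_0^{-1}\bigl[\log(1+R)\,\|u_n\|_2^2\,\|v_n\|_2^2+B_1(u_n^2,v_n^2)\bigr]$.

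From here both claims follow at once. Under hypothesis (\ref{eq:19}), the boundedness of $\|v_n\|_2$ and of $B_1(u_n^2,v_n^2)$ gives $|v_n|_*\le C$ for $n\ge n_0$. Under the stronger hypothesis (\ref{eq:18}), $\|v_n\|_2\to 0$ and $B_1(u_n^2,v_n^2)\to 0$ drive the right-hand side to $0$, whence $|v_n|_*\to 0$. I expect the only genuinely delicate point to be the localization step: one is forced to restrict the $x$-integration to a ball precisely because $u_n$ need not belong to $X$, and one must then exploit the $L^2$-convergence of $u_n$ to a nonzero limit to secure the uniform mass bound $\int_{B_R}u_n^2\ge\delta_0$ that makes the division legitimate.
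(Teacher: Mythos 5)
Your proof is correct, and it is essentially the argument of the cited source: the paper itself only quotes this lemma from \cite[Lemma 2.1]{CiWe}, and the proof there rests on exactly your two ingredients, namely the subadditivity bound $\log(1+|y|)\le\log(1+|x|)+\log(1+|x-y|)$ and the localization $\int_{B_R}u_n^2\ge\delta_0>0$ for $n\ge n_0$, which together yield your master inequality. Both conclusions then follow as you state.
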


\begin{lem} \cite[Lemma 2.6]{CiWe} \label{lem_Cingo_2.6}
Let $(u_n),(v_n),(w_n)$ be bounded sequences in $X$ such that $u_n \overset{X}{\rightharpoonup} u$ weakly in $X$.
Then, for every $z\in X$, we have 
$$B_1(v_n\,w_n, z(u_n-u))\rightarrow 0. $$
\end{lem}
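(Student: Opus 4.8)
The plan is to reduce the double integral defining $B_1$ to products of single integrals by exploiting the elementary subadditivity inequality
$$0 \le \log(1+|x-y|) \le \log(1+|x|) + \log(1+|y|), \qquad x,y \in \R^2,$$
which follows from $|x-y| \le |x|+|y|$ together with $(1+|x|)(1+|y|) \ge 1+|x|+|y|$. Writing $B_1(v_n w_n, z(u_n-u)) = \int_{\R^2}\int_{\R^2} \log(1+|x-y|)\, v_n(x)w_n(x)\, z(y)(u_n(y)-u(y))\,dx\,dy$ and estimating the integrand in absolute value, the inequality above bounds $|B_1(v_n w_n, z(u_n-u))|$ by the sum of
$$T_n^{(1)} = \Big(\int_{\R^2}\log(1+|x|)\,|v_n w_n|\,dx\Big)\Big(\int_{\R^2}|z|\,|u_n-u|\,dy\Big)$$
and
$$T_n^{(2)} = \Big(\int_{\R^2}|v_n w_n|\,dx\Big)\Big(\int_{\R^2}\log(1+|y|)\,|z|\,|u_n-u|\,dy\Big).$$
It then suffices to show $T_n^{(1)} \to 0$ and $T_n^{(2)} \to 0$.

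For $T_n^{(1)}$, Cauchy--Schwarz gives $\int_{\R^2}\log(1+|x|)\,|v_n w_n|\,dx \le |v_n|_* \,|w_n|_*$, which stays bounded since $(v_n)$ and $(w_n)$ are bounded in $X$. The second factor is controlled by $\|z\|_2\,\|u_n-u\|_2$, and $\|u_n-u\|_2 \to 0$ because $u_n \rightharpoonup u$ in $X$ forces $u_n \to u$ strongly in $L^2(\R^2)$ by the compact embedding of Lemma \ref{sec:comp-cond}(i). Hence $T_n^{(1)} \to 0$.

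The term $T_n^{(2)}$ is the delicate one, and I expect it to be the main obstacle: the first factor is harmless (bounded by $\|v_n\|_2\|w_n\|_2$), but the weighted integral $\int_{\R^2}\log(1+|y|)\,|z|\,|u_n-u|\,dy$ cannot be handled by a single Cauchy--Schwarz estimate, since that would only bound it by $|z|_*\,|u_n-u|_*$, and we merely have weak convergence in $X$, so $|u_n-u|_*$ need not vanish. Instead I would argue by truncation at radius $R$: on the ball $B_R$ I bound $\log(1+|y|) \le \log(1+R)$ and use $\|u_n-u\|_{L^2(B_R)}\to 0$; on the complement I apply Cauchy--Schwarz to obtain $\big(\int_{|y|>R}\log(1+|y|)\,z^2\,dy\big)^{1/2}\,|u_n-u|_*$, where the first factor tends to $0$ as $R\to\infty$ because $z\in X$, while $|u_n-u|_*$ stays bounded. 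A standard $\varepsilon/2$ argument---first fix $R$ large to make the tail uniformly small in $n$, then let $n\to\infty$ to kill the ball part---yields $\int_{\R^2}\log(1+|y|)\,|z|\,|u_n-u|\,dy \to 0$, hence $T_n^{(2)}\to 0$. Combining the two estimates gives $B_1(v_n w_n, z(u_n-u)) \to 0$.
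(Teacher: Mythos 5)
Your proof is correct. Note that the paper itself does not prove this lemma: it is quoted verbatim from \cite[Lemma 2.6]{CiWe}, so there is no in-paper argument to compare against. Your argument --- splitting $\log(1+|x-y|)\le\log(1+|x|)+\log(1+|y|)$, killing $T_n^{(1)}$ via the weighted Cauchy--Schwarz bound $|v_n|_*|w_n|_*$ together with the compact embedding $X\hookrightarrow L^2(\R^2)$, and handling $T_n^{(2)}$ by the truncation at radius $R$ with the tail controlled by $\bigl(\int_{|y|>R}\log(1+|y|)z^2\,dy\bigr)^{1/2}|u_n-u|_*$ --- is exactly the standard route and is essentially the proof given in the cited reference; every step (boundedness of $|u_n-u|_*$, uniformity of the tail in $n$, the $\varepsilon/2$ limit order) is correctly justified.
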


From Lemmas \ref{lem_Cingo_2.1} and \ref{lem_Cingo_2.6} we obtain

\begin{lem}\label{V_1}
Let $(u_n) \subset S(c)$ be such that
$u_n \overset{X}{\rightharpoonup} u,$
$ u_n \overset{H}{\rightarrow} u$
and 
$V_1(u_n)\rightarrow V_1(u).$
Then $ u_n \overset{X}{\rightarrow} u.$
\end{lem}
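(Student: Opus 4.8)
We have $(u_n)\subset S(c)$ with $u_n\rightharpoonup u$ in $X$, $u_n\to u$ in $H$, and $V_1(u_n)\to V_1(u)$, and we must upgrade the weak convergence in $X$ to strong convergence in $X$. Since the $X$-norm is $\|u\|_X^2=\|u\|^2+|u|_*^2$ and we already have $\|u_n\|\to\|u\|$ (from $u_n\to u$ in $H$), the whole problem reduces to showing $|u_n|_*\to|u|_*$, or equivalently, writing $v_n:=u_n-u$, that $|v_n|_*\to 0$. The natural tool is Lemma \ref{lem_Cingo_2.1}, whose second half delivers exactly the conclusion $|v_n|_*\to 0$ once we verify its two hypotheses \eqref{eq:18}, namely $\|v_n\|_2\to 0$ and $B_1(u_n^2,v_n^2)\to 0$.

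**The plan, step by step.** First I would set $v_n:=u_n-u$. The condition $\|v_n\|_2\to 0$ is immediate from $u_n\to u$ in $H$. The main point is therefore to prove $B_1(u_n^2,v_n^2)\to 0$; once this is done, Lemma \ref{lem_Cingo_2.1} applies (with the pointwise a.e.\ convergence $u_n\to u\neq 0$ obtained along a subsequence from the $H$-convergence, and $u\neq 0$ since $\|u\|_2^2=c>0$) and gives $|v_n|_*\to 0$, whence $\|u_n-u\|_X\to 0$. To handle $B_1(u_n^2,v_n^2)$, I would expand it bilinearly. Recalling that $V_1(w)=B_1(w^2,w^2)$, a natural identity is
\[
B_1(u_n^2,v_n^2)=B_1(u_n^2,u_n^2)-2B_1(u_n^2,u_n u)+B_1(u_n^2,u^2),
\]
after writing $v_n^2=u_n^2-2u_nu+u^2$. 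Now $B_1(u_n^2,u_n^2)=V_1(u_n)\to V_1(u)=B_1(u^2,u^2)$ by hypothesis. For the remaining two cross terms I would invoke Lemma \ref{lem_Cingo_2.6}: with the bounded-in-$X$ sequences at hand (boundedness of $(u_n)$ in $X$ follows from weak convergence), taking the roles of $(v_n),(w_n),z$ appropriately yields $B_1(u_n\,u_n,\,u(u_n-u))=B_1(u_n^2,u\,v_n)\to 0$ and similarly $B_1(u^2,u(u_n-u))\to 0$, which lets me replace $u_n$ by $u$ in the last two terms up to $o(1)$. Collecting, $B_1(u_n^2,v_n^2)\to B_1(u^2,u^2)-2B_1(u^2,u^2)+B_1(u^2,u^2)=0$.

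**The main obstacle.** The delicate bookkeeping is the cross-term analysis: Lemma \ref{lem_Cingo_2.6} is stated for the bilinear form $B_1(v_n w_n, z(u_n-u))$ with a weakly convergent sequence $u_n\rightharpoonup u$ in $X$ and bounded factors, so I must cast each cross term into precisely that shape and check that all the sequences I use ($(u_n)$, $u$, and the test function $z=u$) are genuinely bounded in $X$ and that the weakly convergent slot is $u_n-u$. The symmetry of $B_1$ and the identity $V_i'(u)v=4B_i(u^2,uv)$ from Lemma \ref{sec:comp-cond}(ii) are the guiding structure. I expect no analytic subtlety beyond correctly matching the three arguments of Lemma \ref{lem_Cingo_2.6}; the only place requiring care is ensuring $u\neq 0$ so that the first half of Lemma \ref{lem_Cingo_2.1} (and hence its second half) is applicable, which is guaranteed by $u\in S(c)$ with $c>0$.
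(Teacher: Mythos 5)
Your reduction is exactly the paper's: set $v_n=u_n-u$, note $\|v_n\|_2\to 0$ and $u\neq 0$, and reduce everything via Lemma \ref{lem_Cingo_2.1} to showing $B_1(u_n^2,(u_n-u)^2)\to 0$, which you then expand bilinearly and attack with Lemma \ref{lem_Cingo_2.6}. The gap is in your last step, where you claim that the two limits $B_1(u_n^2,u(u_n-u))\to 0$ and $B_1(u^2,u(u_n-u))\to 0$ let you ``replace $u_n$ by $u$'' in the terms $-2B_1(u_n^2,u_nu)$ and $B_1(u_n^2,u^2)$. The first replacement is fine: $B_1(u_n^2,u_nu)=B_1(u_n^2,u^2)+B_1(u_n^2,u(u_n-u))$, so after cancellation you are left with $B_1(u_n^2,v_n^2)=V_1(u_n)-B_1(u_n^2,u^2)+o(1)$. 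But the remaining claim $B_1(u_n^2,u^2)\to B_1(u^2,u^2)$ does not follow from Lemma \ref{lem_Cingo_2.6}: the difference is $B_1\bigl((u_n-u)(u_n+u),u^2\bigr)$, and here the weakly-null factor $u_n-u$ is multiplied by the \emph{sequence} $u_n+u$, not by a fixed $z\in X$ as that lemma requires. Splitting off the piece the lemma does handle leaves $B_1(u^2,(u_n-u)^2)\ge 0$, whose vanishing is essentially what you are trying to prove, so this route is circular as stated. Contrary to your closing remark, the subtlety is not merely a matter of matching the three arguments of Lemma \ref{lem_Cingo_2.6}.

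The missing idea --- and the one point where the paper's proof genuinely differs from yours --- is that one does not need the full limit of $B_1(u_n^2,u^2)$, only the one-sided bound $\varliminf B_1(u_n^2,u^2)\ge B_1(u^2,u^2)=V_1(u)$, which follows from Fatou's lemma because the integrand $\log(1+|x-y|)\,u_n^2(x)u^2(y)$ is nonnegative and $u_n\to u$ a.e.\ along a subsequence. Combined with $V_1(u_n)\to V_1(u)$ and the sign $B_1(u_n^2,v_n^2)\ge 0$, this yields $\varlimsup B_1(u_n^2,v_n^2)\le \varlimsup V_1(u_n)-\varliminf B_1(u_n^2,u^2)\le 0$, hence $B_1(u_n^2,v_n^2)\to 0$ and the conclusion via Lemma \ref{lem_Cingo_2.1}. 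With that one-line Fatou argument inserted in place of your ``replacement'' step, your proof closes.
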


\begin{proof}
In order to show that ${u_n}\rightarrow u$ in X, we have to prove that
$|{u_n}-u|_*\rightarrow 0. $
Since $u_n \overset{H}{\rightarrow} u$, then
${u_n}\rightarrow u$ in $L^2(\R^2)$ with $u\neq 0$. Hence, by Lemma \ref{lem_Cingo_2.1}, 
we actually only need to prove that
$$B_1({u_n}^2,({u_n}-u)^2)\rightarrow 0. $$ But we have 
$$B_1({u_n}^2,({u_n}-u)^2)=V_1({u_n})-2B_1({u_n}^2,({u_n}-u)u)-B_1({u_n}^2,u^2).$$
Since $({u_n})$ is bounded in X and ${u_n}\rightharpoonup u$ in X, 
we know from Lemma \ref{lem_Cingo_2.6} that
$$B_1({u_n}^2,({u_n}-u)u)\rightarrow 0.$$
Hence,
$$\varlimsup B_1({u_n}^2,({u_n}-u)^2)\leq \varlimsup V_1({u_n})-\varliminf B_1({u_n}^2,u^2)\leq \varlimsup V_1({u_n})-V_1(u)$$
by Fatou's Lemma since we may assume that ${u_n}\rightarrow u$ pointwise almost everywhere in $\R^2$.
Since $V_1({u_n})\rightarrow V_1(u)$ and $B_1({u_n}^2,({u_n}-u)^2)\geq 0$, we conclude that $B_1({u_n}^2,({u_n}-u)^2)\rightarrow 0$. Whence the result.
\end{proof}

Our next two lemmas explore the links between the compactness of a sequence in $L^2(\R^2)$ and the boundedness on the functional $V_1$. 

\begin{lem}
\label{lem_vanishing_V1}
Let $(u_n) \subset S(c)$ and assume the existence of $\varepsilon\in (0,c)$ such that for all $R>0$, we have
$$ \varliminf \underset{x\in \mathbb{R}^2}{\sup}\int_{B(x,R)} {u_n}^2 \leq c-\varepsilon.$$
Then,
$$\varlimsup V_1(u_n)=+\infty. $$
\end{lem}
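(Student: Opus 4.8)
The plan is to exploit the sign and monotonicity of the kernel $\log(1+|x-y|)$, which is nonnegative and nondecreasing in $|x-y|$, in order to turn the non-concentration hypothesis into a quantitative pointwise lower bound for the logarithmic potential generated by $u_n^2$. First I would restate the assumption mass-theoretically. Setting $M_n(R):=\sup_{x\in\R^2}\int_{B(x,R)}u_n^2$, the hypothesis reads $\varliminf_n M_n(R)\le c-\varepsilon$ for every $R>0$. Since $\|u_n\|_2^2=c$, whenever $M_n(R)\le c-\varepsilon/2$ we get, for every center $x\in\R^2$,
$$\int_{\{|x-y|>R\}}u_n^2(y)\,dy \;=\; c-\int_{B(x,R)}u_n^2(y)\,dy \;\ge\; \frac{\varepsilon}{2}.$$

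The key step is then to observe that $\log(1+|x-y|)\ge\log(1+R)$ on $\{|x-y|>R\}$, which gives the uniform pointwise estimate
$$\int_{\R^2}\log(1+|x-y|)\,u_n^2(y)\,dy \;\ge\; \log(1+R)\int_{\{|x-y|>R\}}u_n^2(y)\,dy \;\ge\; \frac{\varepsilon}{2}\,\log(1+R),$$
valid for all $x\in\R^2$. Multiplying by $u_n^2(x)$ and integrating in $x$ yields
$$V_1(u_n) \;\ge\; \frac{\varepsilon}{2}\,\log(1+R)\int_{\R^2}u_n^2(x)\,dx \;=\; \frac{c\,\varepsilon}{2}\,\log(1+R),$$
for every index $n$ with $M_n(R)\le c-\varepsilon/2$.

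To conclude, I would fix an arbitrary $K>0$ and choose $R>0$ so large that $\tfrac{c\varepsilon}{2}\log(1+R)>K$. For this $R$ the hypothesis $\varliminf_n M_n(R)\le c-\varepsilon$ produces infinitely many indices $n$ with $M_n(R)\le c-\varepsilon/2$, and for each of them the previous bound gives $V_1(u_n)>K$. Hence $\varlimsup_n V_1(u_n)\ge K$, and since $K$ is arbitrary, $\varlimsup_n V_1(u_n)=+\infty$.

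The one point to handle with care — more a bookkeeping subtlety than a genuine obstacle — is that the subsequence along which $M_n(R)$ is small depends on $R$. This is harmless here, because the target is only $\varlimsup_n V_1(u_n)=+\infty$: for each threshold $K$ one picks its own large $R$ and its own subsequence, and no single subsequence valid for all scales simultaneously is required. Had the desired conclusion instead been convergence $V_1(u_n)\to\infty$ along one fixed subsequence, one would have to reconcile the non-concentration information across scales $R$, which is precisely the more delicate situation addressed by the companion compactness lemma.
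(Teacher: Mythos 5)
Your proof is correct and follows essentially the same route as the paper's: both arguments split the double integral at scale $R$, use $\log(1+|x-y|)\ge\log(1+R)$ off the diagonal together with the mass bound $\sup_x\int_{B(x,R)}u_n^2\le c-\varepsilon/2$ to get $V_1(u_n)\gtrsim c\,\varepsilon\log(1+R)$ along infinitely many indices, and then let $R\to\infty$. The only difference is cosmetic (you first derive a pointwise lower bound on the potential and then integrate, while the paper bounds the near-diagonal double integral directly), and your closing remark about the $R$-dependence of the subsequence is exactly the point the paper also handles by working with $\varlimsup$ for each fixed $R$.
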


\begin{proof}
Let $R>0$. We denote $$\varepsilon_n= \underset{x\in \mathbb{R}^2}{\sup}\int_{B(x,R)} {u_n}^2.$$
Up to a subsequence, we can assume that 
$ \lim \varepsilon_n\leq c-\varepsilon $. Then, 
\begin{align*}
V_1(u_n)&\geq \iint_{|x-y|\geq R} {u_n}^2(x){u_n}^2(y)\log(1+|x-y|)dxdy\\
&\geq \log(1+R)\left[c^2- 
\iint_{|x-y|\leq R} {u_n}^2(x){u_n}^2(y)dxdy\right].
\end{align*}
But $$\iint_{|x-y|\leq R} {u_n}^2(x){u_n}^2(y)=\int_{\R^2} {u_n}^2(x) \int_{B(x,R)} {u_n}^2(y)\leq c\; \varepsilon_n,$$
hence,
$$V_1(u_n)\geq \log(1+R)\;c\;[c-\varepsilon_n]\geq \log(1+R)\; c\;\frac{\varepsilon}{2}$$
for n large enough, which implies
$$\varlimsup V_1(u_n)\geq \log(1+R)\; c\;\frac{\varepsilon}{2}.$$
Letting $R$ go to infinity, we get the result.
\end{proof}

As a consequence of Lemma \ref{lem_vanishing_V1}, we obtain,

\begin{lem}
\label{lem_V1(u_n)_bornee}
Let $(u_n)$ be a sequence of $S(c)$ such that $\left(V_1(u_n)\right)$ is bounded.\\ Then there exists a subsequence of $(u_n)$ which, up to translation, converges to $u$ in $L^2(\R^2)$. \\More precisely, for all $k\geq 1$, there exist $n_k\rightarrow  \infty$ and $x_{k}\in \R^2$ such that $u_{n_k}(\cdot -x_{k})\rightarrow u$ strongly in $L^2(\R^2)$.

In addition if the sequence $(u_n)$ consists of radial functions than necessarily the sequence $(x_k) \subset \R^2$ is bounded.
\end{lem}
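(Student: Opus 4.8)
The plan is to deduce Lemma \ref{lem_V1(u_n)_bornee} as a consequence of Lemma \ref{lem_vanishing_V1} via the concentration-compactness principle of P.-L. Lions. First I would argue by contraposition against the vanishing alternative: since $(V_1(u_n))$ is bounded, the conclusion of Lemma \ref{lem_vanishing_V1} fails, so there is no $\varepsilon \in (0,c)$ with the stated uniform smallness property. Concretely, taking $\varepsilon$ arbitrarily small forces, for every $R>0$,
\[
\varliminf_{n\to\infty} \sup_{x\in\R^2} \int_{B(x,R)} u_n^2 \, dx = c,
\]
since otherwise we could pick some $\varepsilon\in(0,c)$ violating the hypothesis of Lemma \ref{lem_vanishing_V1} and obtain $\varlimsup V_1(u_n)=+\infty$, contradicting boundedness. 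In particular, fixing $R=1$, there exist points $x_k\in\R^2$ and indices $n_k\to\infty$ such that $\int_{B(x_k,1)} u_{n_k}^2\,dx \to c$ (or at least stays bounded below by a positive constant; one extracts a subsequence realizing the $\varliminf$).

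Next I would set $v_k := u_{n_k}(\cdot - x_k)$. Since $X \hookrightarrow L^2(\R^2)$ and the translated sequence is bounded in $H^1(\R^2)$ (indeed $\|u_n\|_{L^2}^2=c$ is fixed, and boundedness of $V_1$ together with $V_1 \ge 0$ controls the mass at infinity while the $H^1$-norm is controlled along a minimizing setup), we may pass to a subsequence with $v_k \rightharpoonup u$ weakly in $H^1(\R^2)$ and $v_k \to u$ pointwise a.e. The concentration property $\int_{B(0,1)} v_k^2 \,dx = \int_{B(x_k,1)} u_{n_k}^2\,dx \to c > 0$ guarantees $u\neq 0$, so no mass is lost locally. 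To upgrade weak to strong $L^2$-convergence, I would invoke a standard non-vanishing/tightness argument: the failure of vanishing at \emph{every} scale $R$, combined with $\|v_k\|_{L^2}^2 = c$ for all $k$, rules out both the vanishing and the dichotomy alternatives of concentration-compactness, leaving compactness, i.e. $v_k \to u$ strongly in $L^2(\R^2)$. Relabelling gives the stated $n_k$ and $x_k$.

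For the final radial assertion, I would argue that if each $u_n$ is radially symmetric, then so is $u_{n_k}$, and the concentration of $L^2$-mass of a radial function inside the translated ball $B(x_k,1)$ forces $(x_k)$ to be bounded. The point is that a radial $L^2$-function of fixed mass $c$ cannot simultaneously concentrate a near-full portion of its mass in a unit ball centered far from the origin, because radial symmetry spreads that mass over the full sphere $\{|x|=|x_k|\}$ of radius comparable to $|x_k|$; quantitatively, if $\int_{B(x_k,1)} u_{n_k}^2 \ge c-\varepsilon$ with $|x_k|\to\infty$, rotating $x_k$ around the origin produces $\sim |x_k|$ disjoint unit balls each carrying mass $\ge c-\varepsilon$, contradicting the total mass $c$ once $|x_k|$ is large. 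Hence $(x_k)$ is bounded.

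I expect the main obstacle to be the passage from non-vanishing to genuine strong $L^2$-compactness, that is, excluding the dichotomy case of concentration-compactness: one must show that the $L^2$-mass cannot split into two pieces drifting apart. The natural way around this is again Lemma \ref{lem_vanishing_V1} applied cleverly, or a direct splitting argument showing that any such dichotomy would again blow up $V_1$ (since separating two lumps of mass by a growing distance $R$ contributes $\gtrsim \log(1+R)$ to $V_1$, exactly as in the proof of Lemma \ref{lem_vanishing_V1}), contradicting boundedness of $(V_1(u_n))$. This is where the logarithmic kernel does the essential work, and tying the dichotomy exclusion back to the $V_1$ growth estimate is the delicate step.
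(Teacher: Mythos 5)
Your overall strategy --- contraposition from Lemma \ref{lem_vanishing_V1}, translation to the concentration points, passage to a weak limit, upgrade to strong $L^2$ convergence, and a disjoint-balls argument for the radial assertion --- is the same as the paper's, and the radial step in particular matches the paper's proof (the paper uses just the two disjoint balls $B(\pm x_{n_0},R)$ rather than $\sim |x_k|$ rotated copies, but the idea is identical). The main part, however, has concrete gaps.

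First, the negation of the hypothesis of Lemma \ref{lem_vanishing_V1} reads: for every $\varepsilon\in(0,c)$ there exists \emph{some} radius $R_\varepsilon>0$ with $\varliminf_n \sup_{x}\int_{B(x,R_\varepsilon)}u_n^2> c-\varepsilon$. It does \emph{not} say that for every fixed $R$ the liminf--sup equals $c$, and your claim that one may fix $R=1$ and find $\int_{B(x_k,1)}u_{n_k}^2\to c$ is false: take $u_n\equiv u$ a fixed bump spread over $B(0,10)$ with $\|u\|_2^2=c$; then $(V_1(u_n))$ is constant, yet no unit ball ever carries more than a fixed small fraction of $c$. The paper lets the radius $R_k$ depend on $k$ (choosing $R_k$, then $n_k$ and $x_k$ so that $\int_{B(x_k,R_k)}u_{n_k}^2>c-1/k$), and this near-full-mass concentration in a single ball is also what dispenses with a separate exclusion of dichotomy --- the step you yourself flag as ``delicate'' and leave unproved, since mere non-vanishing plus fixed norm does not rule out splitting. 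Second, you invoke boundedness of the translated sequence in $H^1(\R^2)$, pointwise a.e.\ convergence, and Rellich-type local compactness to get $u\neq 0$; none of this is available, because the lemma assumes only $(u_n)\subset S(c)$ and $(V_1(u_n))$ bounded, with no gradient bound (``controlled along a minimizing setup'' imports information from the intended application, not from the statement). The paper's proof stays entirely at the $L^2$ level, concluding strong convergence from weak $L^2$ convergence together with the concentration estimate (it is admittedly terse at the point of identifying $\|v\|_2$ with $\lim\|v_k\|_2$, but it never needs an $H^1$ bound). As written, your argument proves a weaker statement under stronger hypotheses than the lemma actually asserts.
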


\begin{proof}
Since $\left(V_1(u_n)\right)$ is bounded, we deduce from Lemma \ref{lem_vanishing_V1} that  for all $k\geq 1$, there exist $R_k>0$, $n_k\rightarrow  \infty$ and $x_{k}\in \R^2$ such that 
$$ \int_{B(x_{k},R_k)}{u_{n_k}}^2>c-\frac{1}{k}.$$
Let us set $$v_k=u_{n_k}(\cdot+x_{k}).$$
Since $v_k\in S(c)$, we may assume that, up to subsequence,  $v_k \rightharpoonup v$ weakly in $L^2(\R^2)$.
Moreover, since for all $k\geq 1$, $\int_{\R^2}{v_k}^2\geq \int_{B(0,R_k)}{v_k}^2>c-\frac{1}{k}$, then
$$\lim ||v_k||_2^2 =c.$$ 
Hence, $v_k \rightarrow v$ strongly in $L^2(\R^2)$.

Now if  $(u_n)$ is a sequence of radially symmetric functions we claim that necessarily $(x_k) \subset \R^2$ is bounded. Indeed by Lemma \ref{lem_vanishing_V1} we can fix a $R>0$ such that
 $$ \varliminf \underset{x\in \mathbb{R}^2}{\sup}\int_{B(x,R)} {u_n}^2 \geq \frac{3}{4}c.$$  Then, using  the definition of the supremum and the fact that each $u_n$ is radial if we assume that $(x_k)$ is unbounded we  can find a $n_0 \in \N$ where $x_{n_0}$ satisfies $|x_{n_0}| >R$ such that $$\int_{B(x_{n_0},R)} {u_n}^2 \geq \frac{2}{3}c\quad \mbox{and} \quad  \int_{B(-x_{n_0},R)} {u_n}^2 \geq \frac{2}{3}c,$$
providing a contradiction.
\end{proof}

Our next result establish, under general assumptions, a Pohozaev identity satisfies by the critical points of $F$. A previous, less general version was derived in 
\cite[Lemma 2.4]{DuWe} and we make used of ingredients introduced there in our proof. Note however that we do not make use of the exponential decay of the solutions which is likely not available under our more general assumptions.  As a consequence of this Pohozaev identify any critical point $u \in X$ of $F$ satisfies $Q(u)=0$ and this property will proved crucial in Lemma \ref{convergence}.

			\begin{lem}
\label{lem_Q=0}
Any weak solution $u \in X$ to
\begin{equation}\label{eqt}
- \Delta u + \lambda u + \gamma (log(|\cdot| * |u|^2) u = a |u|^{p-2}u
\end{equation}
where $\lambda \in \R$, $\gamma \in \R$ and $p > 2$, satisfies the Pohozaev identity
\begin{align}
\lambda \int_{\R^2} |u(x)|^2 dx & + \gamma \int_{\R^2}\int_{\R^2}log(|x-y|) |u(x)|^2 |u(y)|^2 dx dy \nonumber \\
&+  \frac{\gamma}{4}\Bigl( \int_{\R^2}|u(x)|^2 dx \Bigr)^2  - \frac{2a}{p}\int_{\R^2}|u(x)|^p dx =0.
\label{PL}
\end{align}
As a consequence it
satisfies $Q(u)=0$ where $Q$ is defined in (\ref{defQ}).
\end{lem}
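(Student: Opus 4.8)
The plan is to prove the Pohozaev identity \eqref{PL} by the multiplier method, testing \eqref{eqt} against a truncated dilation field $x\cdot\nabla u$, and then to obtain $Q(u)=0$ by combining \eqref{PL} with the Nehari-type identity coming from testing \eqref{eqt} against $u$ itself. (Heuristically, \eqref{PL} expresses that the energy naturally associated with \eqref{eqt} is stationary at $\sigma=1$ under the spatial dilations $u\mapsto u(\cdot/\sigma)$, via the scalings of $A$, $C$ and $V$; I prefer a direct argument in order to control all the integrals.) First I record the two identities I use. Testing \eqref{eqt} against $u\in X$, which is licit by the $C^1$ regularity of $F$ from Lemma \ref{sec:comp-cond}, gives
\begin{equation}
A(u)+\lambda\,\|u\|_{L^2}^2+\gamma\,V(u)-a\,C(u)=0. \tag{N}
\end{equation}
The final claim $Q(u)=0$ then follows by subtracting (N) from \eqref{PL}: this eliminates $\lambda\|u\|_{L^2}^2$ and $\gamma V(u)$ and leaves exactly $A(u)-a\frac{p-2}{p}C(u)-\frac{\gamma}{4}\|u\|_{L^2}^4=0$, which is $Q(u)=0$ by \eqref{defQ}.

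To establish \eqref{PL} I would first upgrade regularity. Writing \eqref{eqt} as $-\Delta u=a|u|^{p-2}u-\lambda u-\gamma\,(\log|\cdot|*|u|^2)\,u$ and using $X\hookrightarrow L^s(\R^2)$ for every $s\in[2,\infty)$ from Lemma \ref{sec:comp-cond}(i) together with the local boundedness of the logarithmic potential, a standard elliptic bootstrap gives $u\in W^{2,q}_{\mathrm{loc}}(\R^2)$ for all $q<\infty$, hence $u\in C^{1,\alpha}_{\mathrm{loc}}$ with $\nabla u$ locally bounded; this legitimizes the pointwise manipulations with $x\cdot\nabla u$. I would then fix $\psi\in C_c^\infty(\R^2)$ with $\psi\equiv1$ on $B(0,1)$ and $\mathrm{supp}\,\psi\subset B(0,2)$, set $\psi_R(x)=\psi(x/R)$, and test \eqref{eqt} against the compactly supported admissible function $\varphi_R:=\psi_R\,(x\cdot\nabla u)$. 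Integrating by parts term by term, the ``interior'' parts produce the Pohozaev bulk quantities while the remaining contributions are supported in the annulus $\{R\le|x|\le 2R\}$. A helpful feature of dimension two is that the Dirichlet term contributes nothing to the bulk, since $\int(-\Delta u)(x\cdot\nabla u)$ has bulk part $(1-\tfrac N2)\int|\nabla u|^2$, which vanishes at $N=2$. The mass and nonlinear terms contribute $-\lambda\|u\|_{L^2}^2$ and $+\frac{2a}{p}C(u)$ to the bulk. For the nonlocal term, writing $V_u:=\log|\cdot|*|u|^2$ and integrating by parts, one is left with $-\gamma\int V_u u^2\,dx-\frac{\gamma}{2}\int (x\cdot\nabla V_u)\,u^2\,dx$; the first integral is $-\gamma V(u)$, while symmetrizing the double integral in $x\leftrightarrow y$ and using $\frac{x\cdot(x-y)+y\cdot(y-x)}{|x-y|^2}\equiv1$ collapses the second to $-\frac{\gamma}{4}\|u\|_{L^2}^4$, the source of the term $\frac{\gamma}{4}\big(\int_{\R^2}|u|^2\big)^2$ in \eqref{PL}.

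The step I expect to be the main obstacle is the passage to the limit $R\to\infty$ in the annular remainder terms, carried out without the exponential decay used in \cite{DuWe}. The remainders built from $|\nabla u|^2$, $u^2$ and $|u|^p$ are harmless: $|x|\,|\nabla\psi_R|$ is uniformly bounded, and these densities are integrable, so the annular tails vanish by absolute continuity of the integral. The genuinely planar difficulty is the nonlocal remainder containing the potential $V_u$: since $V_u(x)$ grows like $\|u\|_{L^2}^2\log|x|$ as $|x|\to\infty$, the corresponding annular term is bounded only by a constant times $\log R\int_{R\le|x|\le 2R}u^2\,dx$. Here the choice of the space $X$ is decisive: because $\log(1+|x|)\sim\log R$ on the annulus, this quantity is dominated by $\int_{|x|\ge R}\log(1+|x|)\,u^2\,dx$, which tends to $0$ precisely because $u\in X$, i.e. $|u|_*<\infty$. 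This is exactly where membership in $X$ replaces exponential decay. The remaining nonlocal remainder, which involves $\nabla V_u$, is controlled by splitting the kernel $\tfrac{x\cdot(x-y)}{|x-y|^2}$ into a near-diagonal part (an integrable singularity, absorbed by the $L^2$ tail of $u$) and a far part, again using only $u\in X\cap H^1$. Collecting the bulk limits yields \eqref{PL}, and subtracting (N) as above gives $Q(u)=0$, completing the proof.
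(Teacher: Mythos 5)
Your strategy coincides with the paper's: both prove \eqref{PL} by pairing \eqref{eqt} with the dilation field $x\cdot\nabla u$ on truncated domains and then obtain $Q(u)=0$ by subtracting the Nehari identity, exactly as in your display (N). The two proofs differ only in how the truncation is handled. The paper integrates over balls $B_R(0)$, collects boundary integrals on $\partial B_R(0)$, and annihilates them along a subsequence $R_n\to\infty$ by the Berestycki--Lions device of \cite[Proposition 1]{BeLi1} (if $f\in L^1(\R^2)$ then $R_n\int_{\partial B_{R_n}}|f|\,d\sigma\to0$ for some sequence $R_n\to\infty$); the required membership $wu^2\in L^1(\R^2)$ comes from the asymptotics $w(x)-\|u\|_2^2\log|x|\to0$ of \cite[Proposition 2.3]{CiWe} together with $u\in X$ --- which is exactly the role $u\in X$ plays in your estimate $\log R\int_{R\le|x|\le2R}u^2\le\int_{|x|\ge R}\log(1+|x|)u^2\to0$. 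Your smooth cutoff buys a genuine limit $R\to\infty$ instead of a subsequential one, at the price of having to pass to the limit directly in $\int\psi_R\,u^2\,(x\cdot\nabla V_u)\,dx$, whereas the paper extracts the convergence of $\int_{B_{R_n}}u^2(x\cdot\nabla w)\,dx$ from the identity itself (all the other terms being integrable) before evaluating the limit by symmetrization.

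That last term is the one place where your sketch needs to be firmed up. A ``near-diagonal/far'' splitting of $\frac{x\cdot(x-y)}{|x-y|^2}$ by itself cannot work: the far part is of size $|x|/|x-y|$, which grows linearly in $|x|$, while $u\in X$ only supplies a logarithmic weight, so $u^2(x)u^2(y)\frac{x\cdot(x-y)}{|x-y|^2}$ need not be absolutely integrable on $\R^2\times\R^2$ and Fubini/symmetrization cannot be invoked on the untruncated double integral. The correct route is to symmetrize with the cutoff in place: write $\frac{x\cdot(x-y)}{|x-y|^2}=\frac12+\frac12\,\frac{|x|^2-|y|^2}{|x-y|^2}$ and use the antisymmetry of the second summand, so that after symmetrization the error term carries the factor $\psi_R(x)-\psi_R(y)$; since $\bigl|\,|x|-|y|\,\bigr|\le|x-y|$ and $|\nabla\psi_R|\le C/R$, the product $|\psi_R(x)-\psi_R(y)|\,\bigl|\frac{|x|^2-|y|^2}{|x-y|^2}\bigr|$ is bounded by an absolute constant on its support, and that support forces $\max(|x|,|y|)\ge R$, whence the error is $O\bigl(\|u\|_2^2\int_{|x|\ge R}u^2\bigr)\to0$ using only $u\in L^2$. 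With this step made explicit your argument closes and is, if anything, slightly more self-contained than the paper's.
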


\begin{proof}
Since $u \in X \subset H^1(\R^2)$, standard elliptic regularity theory yields that $u \in W_{loc}^{2,p}(\R^2)$ for every $p \in [1, \infty)$ and that 
$u \in C^2(\R^2)$. In \cite[Proposition 2.3]{CiWe} it is proved that the function $w : \R^2 \mapsto \R$ given by $w(x)= \int_{\R^2}\log|x-y| u^2(y)\,dy$ is of class $C^3$ on $\R^2$ and satisfies 
$$
 w(x) - \|u\|_2^2  \log |x|
\to 0 \quad \text{as $|x| \to \infty$.}
$$
Since $u \in X$ this implies in particular that $w u^2 \in L^1(\R^2)$.  For notational convenience, let us introduce the functions
$$g(s) = a |s|^{p-2} s - \lambda s \quad \mbox{and} \quad G(s) = \int_0^s g(t) dt = \frac{a |s|^p}{p} - \frac{\lambda s^2}{2}$$ which belong to $C^1(\R)$, since $p>2$. First, following \cite[Proposition 1]{BeLi1}, we multiply the equation (\ref{eqt}) by $x \cdot \nabla u$ and integrate by parts to get a Pohozaev type identity on a ball $B_R(0) : = \{x \in \R^2 \, | \, |x| < R \}$. So let $R >0$. Since, 
for any function $u \in C^2(\R^2)$ we have 
$$\Delta u (x \cdot \nabla u) = div \Big(\nabla u (x \cdot \nabla u) - x \frac{|\nabla u|^2}{2}\Big) \quad \mbox{on} \ \R^2,$$
the divergence theorem gives
\begin{equation}\label{2.5}
\int_{B_R(0)} - \Delta u(x \cdot \nabla u) dx = - \frac{1}{R}\int_{\partial B_R(0)}|x \cdot \nabla u|^2 d \sigma + 
\frac{R}{2} \int_{\partial B_R(0)}|\nabla u|^2 d \sigma.
\end{equation}
Similarly, since $g(u) (x \cdot \nabla u) = div( x G(u))-  2 G(u)$ on $\R^2$, we have
\begin{equation}\label{2.6}
\int_{B_R(0)} g(u) (x \cdot \nabla u) dx =  - 2 \int_{B_R(0)}  G(u) dx  + R \int_{\partial B_R(0)} G(u) d \sigma. 
\end{equation}
Moreover, since $w u (x \cdot \nabla u) = \frac{1}{2} \Big( div[x w u^2] - u^2(x \cdot \nabla w) - 2 w u^2 \Big),$ we have
\begin{equation}\label{2.7}
 \gamma \int_{B_R(0)} w u (x \cdot \nabla u) dx = - \frac{\gamma}{2} \int_{B_R(0)} u^2 (x \cdot \nabla w) dx - \gamma \int_{B_R(0)} w u^2 dx + \frac{\gamma R}{2} 
\int_{\partial B_R(0)} w u^2 d \sigma.
\end{equation}
Thus, multiplying (\ref{eqt}) by $x \cdot \nabla u$ and integrating on $B_R(0)$, we deduce from (\ref{2.5})-(\ref{2.7}) that 
\begin{equation}\label{2.8}
\int_{B_R(0)}  \Big( \gamma\frac{ u^2 (x \cdot \nabla w)}{2} + \gamma w u^2 - 2 G(u) \Big) dx = \int_{\partial B_R(0)} \Big( - \frac{|x \cdot \nabla u|^2}{R} + R \Big( \frac{| \nabla u|^2}{2} + \frac{\gamma w u^2}{2} - G(u)\Big) \Big) d \sigma.
\end{equation}
Next, still following \cite[Proposition 1]{BeLi1}, let us prove that the right hand side in (\ref{2.8}) converges to zero for a suitable sequence $R_n \to \infty$, i.e
\begin{equation*}
R_n \int_{\partial B_{R_n}(0)} |f| d \sigma \to 0 \quad \mbox{for the function} \quad x \mapsto f(x) = \frac{|\nabla u|^2}{2} + \frac{\gamma w u^2}{2} - G(u) - \frac{|x \cdot \nabla u|^2}{|x|^2}.
\end{equation*}
Actually it is a direct consequence of the observation that $f \in L^1(\R^2)$. Indeed, if there is no such sequence $(R_n)$, it follows that
$$ \int_{\partial B_R(0)} |f| d \sigma \geq \frac{c}{R} \quad  \mbox{for} \quad R \geq R_0 \quad \mbox{for some constants} \quad c, R_0 >0$$
and then
$$ \int_{\R^2}|f| dx = \int_0^{\infty} dR \int_{\partial B_R(0)}|f| d \sigma \geq c \int_{R_0}^{\infty} \frac{1}{R}\, d R = \infty.$$
The fact that $f \in L^1(\R^2)$ follows directly using that $u \in H^1(\R^2)$ which implies that $|\nabla u|^2$ and $G(u)$ are in
 $L^1(\R^2)$ and from the already observed property that $wu^2 \in L^1(\R^2)$. 

At this point we deduce from (\ref{2.8}) that
\begin{equation}\label{2.10}
\int_{\R^2}  \Big( \frac{ \gamma u^2 (x \cdot \nabla w)}{2} + \gamma w u^2 - 2 G(u)\Big) dx  = \lim_{n \to \infty} R_n \int_{\partial B_{R_n}(0)} |f| d \sigma =0.
\end{equation}
Now using again that $w u^2$ and $G(u)$ belong to $L^1(\R^2)$ we deduce from (\ref{2.10}) that $u^2 (x \cdot \nabla w) \in L^1(\R^2)$. A direct calculation now gives
$$ x \cdot \nabla w(x) = \int_{\R^2} \frac{|x|^2 - x \cdot y}{|x-y|^2}u^2(y) dy, \quad \mbox{for} \quad x \in \R^2,$$
and thus
\begin{align}
\int_{\R^2}u^2 (x \cdot \nabla w) dx & = \int_{\R^2}\int_{\R^2} \frac{|x|^2 - x \cdot y}{|x-y|^2}u^2(x)u^2(y) dy \nonumber \\
&+  \frac{1}{2} \int_{\R^2}\int_{\R^2} \frac{|x|^2 + |y|^2- 2x \cdot y }{|x-y|^2}u^2(x)u^2(y) dy  = \frac{1}{2} \Big( \int_{\R^2}u^2 dx\Big)^2.
\label{2.11}
\end{align}
From (\ref{2.10}) and (\ref{2.11}) we deduce that (\ref{PL}) holds. 

Now multiplying (\ref{eqt}) by $u$ and integrating we get that
\begin{align}
 	\int_{\R^2} |\nabla u(x)|^2 dx & + \lambda \int_{\R^2} |u(x)|^2 dx  + \gamma \int_{\R^2}\int_{\R^2}log(|x-y|) |u(x)|^2 |u(y)|^2 dx dy \nonumber \\ &
 = a \int_{\R^2}|u(x)|^p dx
\label{eq2}
\end{align}
Combining (\ref{PL}) and (\ref{eq2}) it follows that
$$\int_{\R^2}|\nabla u|^2 dx - \frac{a(p-2)}{p}\int_{\R^2}|u(x)|^p dx - \frac{\gamma c^2}{4} =0$$
and thus, by definition, $Q(u)=0$.
\end{proof}

\begin{lem}\label{convergence}
	Let $(u_n) \subset \Lambda(c)$ be a  Palais-Smale sequence for $F$ restricted to $S(c)$ bounded in $X$. Then, up to a subsequence, $u_n \to u$ strongly in $X$. In particular $u$ is a critical point of $F$ restricted to $S(c)$.
	\end{lem}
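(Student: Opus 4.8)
\emph{Strategy and first steps.} The plan is to extract a weak limit $u\in S(c)$ via the compact embedding, to record the differential identity produced by testing the Palais--Smale condition against $u_n-u$, and then to use that both $u_n$ and $u$ satisfy the Pohozaev relation $Q=0$ in order to force strong convergence of the gradients; the weighted part of the norm will then be recovered from Lemma \ref{lem_Cingo_2.1}. Since $(u_n)$ is bounded in $X$, after passing to a subsequence I would assume $u_n\rightharpoonup u$ weakly in $X$, hence weakly in $H$, and by Lemma \ref{sec:comp-cond}(i) $u_n\to u$ strongly in $L^s(\R^2)$ for every $s\in[2,\infty)$. In particular $\|u\|_2^2=\lim\|u_n\|_2^2=c$, so $u\in S(c)$ and $u\neq 0$; moreover $C(u_n)\to C(u)$ and, by Lemma \ref{sec:comp-cond}(iii), $V_2(u_n)\to V_2(u)$. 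As $(u_n)$ is a Palais--Smale sequence for $F$ on $S(c)$, there are multipliers $\lambda_n\in\R$ with $F'(u_n)-\lambda_n u_n\to 0$ in $X'$; testing against $u_n$ and using that $A(u_n)$, $C(u_n)$ and $V(u_n)$ are bounded (here $0\le V_1(u_n)\le 2c\,|u_n|_*^2$ and $V_2(u_n)\to V_2(u)$), I obtain that $(\lambda_n)$ is bounded, so $\lambda_n\to\lambda$ along a further subsequence.

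\emph{The key identity.} Setting $\phi_n=u_n-u$ and testing $F'(u_n)-\lambda_n u_n\to 0$ against $\phi_n$, the local term produces $\|\nabla\phi_n\|_2^2+o(1)$ by weak $H$-convergence, the terms $\lambda_n\int u_n\phi_n$ and $a\int|u_n|^{p-2}u_n\phi_n$ vanish by $L^2$- and $L^p$-convergence, the $B_2$-contribution vanishes by the $L^{8/3}$-continuity of $V_2'$, and, writing $u_n\phi_n=u\phi_n+\phi_n^2$, the $B_1$-contribution splits as $B_1(u_n^2,u\phi_n)+B_1(u_n^2,\phi_n^2)$, of which the first tends to $0$ by Lemma \ref{lem_Cingo_2.6}. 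This yields
\[
\|\nabla\phi_n\|_2^2+\gamma\,B_1(u_n^2,\phi_n^2)=o(1).
\]
Since $B_1(u_n^2,\phi_n^2)\ge 0$ and $\|\phi_n\|_2\to 0$, Lemma \ref{lem_Cingo_2.1} will give $|\phi_n|_*\to 0$ as soon as $B_1(u_n^2,\phi_n^2)\to 0$, which by the identity above reduces to showing $\|\nabla\phi_n\|_2\to 0$, i.e. $A(u_n)\to A(u)$.

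\emph{Closing the loop with the Pohozaev constraint.} Because $u_n\in\Lambda(c)$ one has $Q(u_n)=0$, that is $A(u_n)=a\frac{p-2}{p}C(u_n)+\gamma\frac{c^2}{4}$; letting $n\to\infty$ and using $C(u_n)\to C(u)$ gives $A(u_n)\to\alpha:=a\frac{p-2}{p}C(u)+\gamma\frac{c^2}{4}$. It then remains to identify $\alpha$ with $A(u)$, and for this I would pass to the limit in $\langle F'(u_n)-\lambda_n u_n,v\rangle\to 0$ for each fixed $v\in X$: all terms pass to the limit as before, and the only delicate point is $B_1(u_n^2,u_nv)$, whose possibly non-convergent part is, by the asymptotics of the logarithmic potential, proportional to $\int uv$, so it merely shifts the multiplier and $u$ is still a weak solution of \eqref{eqt} for some $\tilde\lambda\in\R$. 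Lemma \ref{lem_Q=0} then gives $Q(u)=0$, i.e. $A(u)=\alpha$. Consequently $\|\nabla\phi_n\|_2^2=A(u_n)-2\int\nabla u_n\cdot\nabla u\,dx+A(u)\to\alpha-A(u)=0$; plugging this into the key identity forces $\gamma B_1(u_n^2,\phi_n^2)\to 0$, hence $B_1(u_n^2,\phi_n^2)\to 0$ since $\gamma\neq 0$, and Lemma \ref{lem_Cingo_2.1} gives $|\phi_n|_*\to 0$. Therefore $u_n\to u$ strongly in $X$, and passing to the limit once more (now with strong convergence, so without any defect) shows that $u$ is a critical point of $F$ restricted to $S(c)$.

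\emph{Main obstacle.} I expect the hard part to be the passage to the limit in the nonlocal term $B_1(u_n^2,u_nv)$: because $\log(1+|x-y|)$ is unbounded, part of the weighted mass of $u_n$ may fail to converge, producing a defect aligned with $u$ that only shifts the Lagrange multiplier. It is precisely this phenomenon that prevents a direct identification of $\lim A(u_n)$ with $A(u)$, and it is here that the Pohozaev identity $Q(u)=0$ of Lemma \ref{lem_Q=0} is indispensable: it certifies that the limiting gradient energy equals $\alpha$, which kills the defect and upgrades weak convergence into strong convergence in $X$.
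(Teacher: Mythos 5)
Your proposal is correct and follows essentially the same route as the paper: extract the weak limit and the converging Lagrange multipliers, pass to the limit in the equation so that Lemma \ref{lem_Q=0} yields $Q(u)=0$, combine this with $Q(u_n)=0$ and $C(u_n)\to C(u)$ to get $A(u_n)\to A(u)$, and finally recover $|u_n-u|_*\to 0$ from the Palais--Smale condition tested against $u_n-u$ together with Lemmas \ref{lem_Cingo_2.6} and \ref{lem_Cingo_2.1}. The only difference is cosmetic ordering (you isolate the ``key identity'' before identifying $\lim A(u_n)$, the paper does it after), not substance.
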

	
	\begin{proof}
 We claim that there exists a $\lambda \in \R$ such that $(u_n)$ is Palais-Smale sequence for the functional $F(u) + \frac{\lambda}{2}||u||_2$. Indeed since $(u_n) \subset X$ is bounded we know from  \cite[Lemma 3]{BeLi2} (adapted from the unit sphere to $S(c)$), that
$\| dF_{|_{S(c)}} (u_n)\|_{X^*}=o_n(1)$ is equivalent to $\| dF (u_n) - \frac{1}{c}dF(u_n)(u_n) u_n\|_{X^*}=o_n(1) $.  Now letting
\begin{equation}\label{evvv}
			\lambda_n  : =  - \frac{1}{c} dF(u_n)(u_n) = - \frac{1}{c} \Bigl[A( u_n) + \gamma  V( u_n) - a C( u_n) \Bigr], 
			\end{equation}
			since $(u_n) \subset X$ is bounded we deduce that $(\lambda_n) \subset \R$ is bounded. So, up to a subsequence, $\lambda_n \to \lambda \in \R$ as  $n \to  \infty$ and this proves the claim. At this point, using that $(u_n) \subset X$ is bounded and $dF(u_n) + \frac{\lambda}{2}u_n \to 0$ in $X^*$ we shall deduce that $(u_n)$ strongly converges in $X$ to a $u \in X$ which will thus be a critical point of $F$ restricted to $S(c)$.

			Since $(u_n)$ is bounded in $X$ we can assume, passing to a subsequence if necessary, that $u_n \rightharpoonup u$ weakly in $X$ and, see Lemma \ref{sec:comp-cond}(i), that $u_n \to u$ strongly in $L^s(\R^2)$ for $s \in [2, \infty)$. Next we observe that, since for any $\phi \in X$,
			$$ (dF(u_n) + \frac{\lambda}{2}u_n)\phi \to 0$$
			we have that 
			$$dF(u) + \frac{\lambda}{2} u  =0 \quad \mbox{in} \quad X^*.$$
Namely $u$ is solution to (\ref{eqt}) and by Lemma \ref{lem_Q=0} we deduce that $Q(u)=0$. Now observe that, since $(u_n) \subset \Lambda(c)$, we have, using that $Q(u)=0$
$$ 0 = Q(u_n) = A(u_n) - a \frac{p-2}{p}C(u_n) + \frac{\gamma c^2}{4} = A(u) - a \frac{p-2}{p}C(u) + \frac{\gamma c^2}{4}.$$
Since $C(u_n) \to C(u)$ we then necessarily have $A(u_n) \to A(u)$. In particular $u_n \to u$ in $H^1(\R^2)$. Finally we observe that, since $A(u_n) \to A(u)$ and $u_n \to u$ strongly in $L^s(\R^2)$ for $s \in [2, \infty)$,
				\begin{align}\label{zut}
o(1)
        &= (F'(u_n) + \frac{\lambda}{2}u_n) (u_n -u)  \\
        & = o(1) + A(u_n) - A(u) + \frac{\gamma}{4}V'(u_n)(u_n-u) - a \int_{\R^2} |u_n|^{p-2}u_n(u_n -u)  \nonumber\\
        & =  o(1)  + \frac{\gamma}{4}[V_1'(u_n)(u_n-u) - V_2'(u_n)(u_n-u)]
             \nonumber
\end{align}	
where
$$ \Big| 		V_2'(u_n)(u_n-u)\Big| = | B_2(u_n^2, u_n(u_n -u))\Big|  \leq  ||u_n||_{\frac{8}{3}}^3 ||u_n -u||_{\frac{8}{3}} \to 0$$
as $n \to \infty$ and
$$ V_1'(u_n)(u_n -u) = B_1(u_n^2, u_n(u_n-u)) = B_1(u_n^2, (u_n-u)^2) + B_1(u_n^2,u(u_n-u))$$
with $$B_1(u_n^2, u(u_n-u)) \to 0 \quad \mbox{as} \quad n \to \infty$$
by Lemma \ref{lem_Cingo_2.6}. Combining these estimates we obtain that
$$o(1) = o(1) + B_1(u_n^2, (u_n-u)^2), $$
which implies that $B_1(u_n^2, (u_n-u)^2) \to 0$ as $n \to \infty$. Hence by Lemma \ref{lem_Cingo_2.1}, $|u_n - u|_* \to 0$ as $n \to \infty.$ We conclude that $||u_n - u||_X \to 0$ as $n \to \infty$ as claimed. This ends the proof of the lemma.
\end{proof}

Finally, for future reference, note that using the Gagliardo-Nirenberg inequality
\begin{equation}\label{GN}
||u||_p \leq K_{GN}^{\frac{1}{p}} ||\nabla u||_2^{\beta} ||u||_2^{1- \beta} \mbox{where} \quad \beta = 2 \Big(\frac{1}{2}- \frac{1}{p}\Big)
\end{equation}
			 we obtain that
			\begin{equation}
\label{minoration_norme_u_L^p}
C(u)=\|u\|_p^p\leq K_{GN}\; A(u)^{\frac{p}{2}-1}\;c. 
\end{equation}
Also by (2.2) in \cite{CiWe}
			$$|V_2(u)| \leq C_0 ||u||_{8/3}^4$$
			and using (\ref{GN}) with $p=\frac{8}{3}$ we get that for some best constant $K >0,$ for all $u \in H$,
			\begin{equation}
\label{minoration_V2}
|V_2(u)|\leq K \; \sqrt{A(u)}\;c^{\frac{3}{2}}.
\end{equation}

\section{The case $\gamma>0$}\label{gammapositif}

Throughout this section we assume that $\gamma >0$.

\begin{lem}\label{X-bound}
Assume that $\gamma >0$ and let $(u_n) \subset S(c)$ be a bounded sequence in $H$  such that $F(u_n) \leq d $ for some $d \in \R$. Then 
there exists a sequence
 $(x_n) \subset \R^2$ such that $\tilde{u_n}=u_n(\cdot-x_n) $ has a subsequence converging weakly in X. 

If in addition $(u_n) \subset S(c)$ consists in radially symmetric functions, the sequence $(x_n) \subset \R^2$ is bounded.
\end{lem}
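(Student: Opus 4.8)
The plan is to first extract boundedness of $V_1$ along the sequence, then invoke the $L^2$-compactness-up-to-translation furnished by Lemma \ref{lem_V1(u_n)_bornee}, and finally promote this to weak convergence in $X$ through Lemma \ref{lem_Cingo_2.1}, exploiting that both the $H$-norm and the functional $V_1$ are invariant under translations.

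First I would bound $V_1(u_n)$. Writing $V(u_n)=V_1(u_n)-V_2(u_n)$ and using $F(u_n)\leq d$, one has
$$\frac{\gamma}{4}V_1(u_n)=F(u_n)-\frac12 A(u_n)+\frac{\gamma}{4}V_2(u_n)+\frac{a}{p}C(u_n)\leq d+\frac{\gamma}{4}V_2(u_n)+\frac{a}{p}C(u_n).$$
Since $(u_n)$ is bounded in $H$, the quantity $A(u_n)$ is bounded; hence by (\ref{minoration_V2}) so is $V_2(u_n)$, and by (\ref{minoration_norme_u_L^p}) (or directly the embedding $H^1(\R^2)\hookrightarrow L^p(\R^2)$) so is $C(u_n)=\|u_n\|_p^p$. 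As $\gamma>0$ and $V_1\geq 0$, this shows that $(V_1(u_n))$ is bounded.

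Next, Lemma \ref{lem_V1(u_n)_bornee} applies: there exist $n_k\to\infty$ and $x_k\in\R^2$ such that $v_k:=u_{n_k}(\cdot-x_k)\to v$ strongly in $L^2(\R^2)$, with $\|v\|_2^2=c$, so $v\neq 0$; passing to a further subsequence we may assume $v_k\to v$ pointwise a.e. The crucial observation is that the kernel of $B_1$ and the $H$-norm depend only on differences, hence are translation invariant; therefore $v_k\in S(c)$, $(v_k)$ stays bounded in $H$, and $V_1(v_k)=V_1(u_{n_k})$ remains bounded. Applying Lemma \ref{lem_Cingo_2.1} with both slots taken to be $v_k$ --- legitimate since $v_k\to v\neq 0$ in $L^2$ and a.e., $(v_k)$ is bounded in $L^2$, and $\sup_k B_1(v_k^2,v_k^2)=\sup_k V_1(v_k)<\infty$ --- yields a constant $C>0$ with $|v_k|_*\leq C$ for all large $k$. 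Together with the $H$-boundedness this shows that $(v_k)$ is bounded in $X$.

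Since $X$ is a Hilbert space, a bounded sequence admits a weakly convergent subsequence; after relabelling the translations, $\tilde{u}_n=u_n(\cdot-x_n)$ has a subsequence converging weakly in $X$, which is the first assertion. For the radial case, the last statement of Lemma \ref{lem_V1(u_n)_bornee} already guarantees that $(x_k)$ is bounded, giving the second assertion immediately. I expect the only delicate point to be the control of $V_1$ in the first step (keeping track of the signs forced by $\gamma>0$ and $V_1,V_2,C\geq 0$); once that estimate is secured, the remainder follows mechanically from the cited compactness lemmas.
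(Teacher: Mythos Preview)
Your proof is correct and follows essentially the same route as the paper's: bound $V_1(u_n)$ using the bound on $F$ together with the $H$-boundedness (which controls $A$, $V_2$, and $C$), invoke Lemma \ref{lem_V1(u_n)_bornee} to get $L^2$-convergence after translation, then use Lemma \ref{lem_Cingo_2.1} and translation invariance of $V_1$ to bound $|\cdot|_*$, concluding by reflexivity of $X$. Your write-up is in fact slightly more careful than the paper's in noting the pointwise a.e.\ subsequence required by Lemma \ref{lem_Cingo_2.1}.
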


\begin{proof}
Since $A(u_n)$ is bounded, hence $V_2(u_n)$ and $\|u_n\|_p^p$ are also bounded by \eqref{minoration_V2} and  \eqref{minoration_norme_u_L^p}. 
Now since $F(u_n)$ is bounded, then $V_1(u_n)$ also. We then deduce by Lemma \ref{lem_V1(u_n)_bornee} the existence of $(x_n) \subset \R^2$, which is bounded if $(u_n) \subset S(c)$ consists in radially symmetric functions, such that, if we denote
 $$\tilde{u_n}=u_n(\cdot-x_n) ,$$
 then, up to a subsequence,   
$$\tilde{u_n}\overset{L^2(\R^2)}{\longrightarrow}u. $$
Now, by Lemma \ref{lem_Cingo_2.1},
since $V_1(\tilde{u_n})=V_1(u_n)$ is bounded and $\tilde{u_n}\rightarrow u\neq 0$ 
in $L^2(\R^2)$, we deduce that $|u_n|_*$ is bounded, so $(\tilde{u_n})$ is bounded in X. Since X is a Hilbert space, then, up to subsequence, we may assume that
$\tilde{u_n} \rightharpoonup u.$
\end{proof}

\begin{lem}
\label{lem_cv_faible-->cv_forte}
Assume that $\gamma >0$ and let $(u_n) \subset S(c)$ be such that
$ u_n \overset{X}{\rightharpoonup} u$. 
Then $F(u) \leq \varliminf F(u_n).$
If moreover
$F(u_n)\rightarrow F(u)$ then 
$u_n\overset{X}{\longrightarrow} u\in S(c).$
\end{lem}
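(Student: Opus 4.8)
The plan is to split $F$ into its four pieces $\tfrac12 A$, $\tfrac{\gamma}{4}V_1$, $-\tfrac{\gamma}{4}V_2$ and $-\tfrac{a}{p}C$, and treat each according to its behaviour under weak $X$-convergence. First I would record the consequences of $u_n \rightharpoonup u$ in $X$. Since $(u_n)$ is then bounded in $X$, hence in $H$, the compact embedding of Lemma \ref{sec:comp-cond}(i) gives $u_n \to u$ strongly in $L^s(\R^2)$ for every $s \in [2,\infty)$. In particular $\|u\|_2^2 = \lim\|u_n\|_2^2 = c$, so $u \in S(c)$; moreover $C(u_n) \to C(u)$ (strong $L^p$-convergence) and $V_2(u_n) \to V_2(u)$ by the $L^{8/3}$-continuity of $V_2$ in Lemma \ref{sec:comp-cond}(iii) together with $u_n \to u$ in $L^{8/3}$.

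For the lower-semicontinuity statement I would then use that the two remaining terms are individually weakly lower semicontinuous: $A(u) \le \varliminf A(u_n)$ because $\nabla u_n \rightharpoonup \nabla u$ in $L^2(\R^2)$, and $V_1(u) \le \varliminf V_1(u_n)$ by Lemma \ref{sec:comp-cond}(iv), noting that weak $X$-convergence forces weak $H^1$-convergence (the weak $H$-limit is unique and must coincide with the $L^2$-limit $u$). Writing
$$F(u_n) = \tfrac12 A(u_n) + \tfrac{\gamma}{4}V_1(u_n) - \tfrac{\gamma}{4}V_2(u_n) - \tfrac{a}{p}C(u_n),$$
the last two terms converge to their limits, and since $\gamma>0$ the superadditivity of $\varliminf$ yields $\varliminf F(u_n) \ge \tfrac12 A(u) + \tfrac{\gamma}{4}V_1(u) - \tfrac{\gamma}{4}V_2(u) - \tfrac{a}{p}C(u) = F(u)$.

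For the strong convergence, suppose in addition $F(u_n) \to F(u)$. Subtracting the convergent contributions $-\tfrac{\gamma}{4}V_2(u_n)$ and $-\tfrac{a}{p}C(u_n)$ shows $\tfrac12 A(u_n) + \tfrac{\gamma}{4}V_1(u_n) \to \tfrac12 A(u) + \tfrac{\gamma}{4}V_1(u)$. The decisive elementary step is that two sequences, each of whose $\varliminf$ dominates its own limit and whose sum converges to the sum of the limits, must each converge separately: indeed $\varlimsup \tfrac12 A(u_n) \le \bigl(\tfrac12 A(u)+\tfrac{\gamma}{4}V_1(u)\bigr) - \varliminf \tfrac{\gamma}{4}V_1(u_n) \le \tfrac12 A(u)$, forcing $A(u_n) \to A(u)$, and symmetrically $V_1(u_n) \to V_1(u)$. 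From $A(u_n) \to A(u)$ and $u_n \to u$ in $L^2(\R^2)$ I obtain $\|u_n\|_H \to \|u\|_H$, which with weak $H$-convergence upgrades to $u_n \to u$ strongly in $H$. Having $u_n \rightharpoonup u$ in $X$, $u_n \to u$ in $H$ and $V_1(u_n) \to V_1(u)$, Lemma \ref{V_1} then delivers $u_n \to u$ strongly in $X$.

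The main obstacle is not any single estimate but the decoupling of the joint limit of $A+V_1$: weak lower semicontinuity alone controls the sum only from below, so one must leverage the convergence of $F$ together with the sign condition $\gamma>0$ (which makes both nonnegative terms enter $F$ favourably) to pin down each term. The genuinely nontrivial conclusion, namely strong convergence in the weighted logarithmic norm $|\cdot|_*$ — which does \emph{not} follow from strong $H$-convergence — is precisely what Lemma \ref{V_1} supplies, its proof encapsulating the compactness machinery of Lemmas \ref{lem_Cingo_2.1} and \ref{lem_Cingo_2.6}.
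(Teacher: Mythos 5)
Your proposal is correct and follows essentially the same route as the paper: compact embedding handles $C$ and $V_2$, weak lower semicontinuity of $A$ and $V_1$ (with $\gamma>0$) gives the inequality, and the decoupling of $\varliminf$'s from the convergence of $\tfrac12 A+\tfrac{\gamma}{4}V_1$ forces $A(u_n)\to A(u)$ and $V_1(u_n)\to V_1(u)$, after which Lemma \ref{V_1} upgrades strong $H$-convergence to strong $X$-convergence exactly as in the paper.
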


\begin{proof}
Since $X$ is compactly embedded in $L^s(\R^2)$ for all $x \in [2, \infty)$, see Lemma \ref{sec:comp-cond}(i), we deduce that $u \in S(c)$, $C(u_n) \to C(u)$ and by the continuity of $V_2$ on $L^{\frac{8}{3}}(\R^2)$, see Lemma \ref{sec:comp-cond}(iii), that $V_2(u_n) \to V_2(u)$. The fact that
$$F(u) \leq \varliminf F(u_n)$$
then follows from the weak lower semicontinuity of $u \mapsto V_1(u)$ on $H$ (and thus on $X$), see Lemma \ref{sec:comp-cond}(iv), and of $u \mapsto A(u)$ on $H$.

Now assume that $F(u_n)\rightarrow F(u).$ We shall see that
$A({u_n})\rightarrow A(u)$
and
$V_1({u_n})\rightarrow V_1(u),$
which in particular implies  that ${u_n}\rightarrow u$ strongly in H and then, by  Lemma \ref{V_1} that
$u_n\overset{X}{\longrightarrow} u.$ 
 Indeed, considering $F({u_n})-F(u)$, we get
\begin{equation}
\label{equation_A_V1}
\frac{1}{2}\left[A({u_n})-A(u) \right]+ \frac{\gamma}{4}\left[V_1({u_n})-V_1(u) \right]=o(1).
\end{equation}
Hence, taking the liminf, we get
$$\frac{1}{2}\left[\varliminf A({u_n})-A(u) \right]+ \frac{\gamma}{4}\left[\varliminf V_1({u_n})-V_1(u) \right]\leq 0.$$
Using the lower semicontinuity of A (resp. $V_1$) with respect to the weak $H^1$ (resp. $X$) convergence, we then deduce that
$$ \varliminf A({u_n})=A(u)  \quad \mbox{and} \quad \varliminf V_1({u_n})=V_1(u).$$
Taking the limsup in \eqref{equation_A_V1}, we get the desired result.
\end{proof}

\subsection{Proof of Theorem \ref{minimization1}.}\label{subsection31}

This subsection is mainly devoted to the proof of Theorem \ref{minimization1}. We start with the following lemma.

\begin{lem}
\label{lem_fonctionelle_minoree}
Under the assumption of Theorem \ref{minimization1}, 
$ m= \underset{S(c)}{\inf} F>-\infty.$
\end{lem}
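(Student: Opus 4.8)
The plan is to reduce the whole question to a single real variable, namely $t := A(u) = \|\nabla u\|_2^2 \geq 0$, and then to dispatch the three cases separately. The starting observation, and the point where the hypothesis $\gamma > 0$ enters decisively, is that the kernel of $V_1$ is nonnegative: since $\log(1+|x-y|) \geq 0$ we have $V_1(u) = B_1(u^2,u^2) \geq 0$. Hence, using the splitting $V = V_1 - V_2$ and $\gamma > 0$,
$$F(u) = \frac{1}{2}A(u) + \frac{\gamma}{4}V_1(u) - \frac{\gamma}{4}V_2(u) - \frac{a}{p}C(u) \geq \frac{1}{2}A(u) - \frac{\gamma}{4}|V_2(u)| - \frac{a}{p}C(u).$$
The nonlocal term $V_1$, which is the only part of $F$ genuinely sensitive to the $X$-geometry and the only one that can be large and positive, is thus simply discarded.

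Next I would insert the two a priori estimates already recorded: the bound $|V_2(u)| \leq K\sqrt{A(u)}\,c^{3/2}$ from \eqref{minoration_V2}, and, when $a > 0$, the Gagliardo--Nirenberg bound $C(u) \leq K_{GN}A(u)^{\frac{p}{2}-1}c$ from \eqref{minoration_norme_u_L^p}. Writing $t = A(u)$, this produces in each case a lower bound of the form $F(u) \geq h(t)$ for an explicit continuous function $h:[0,\infty)\to\R$, and it then suffices to verify $\inf_{t\geq 0} h(t) > -\infty$.

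In case (i) ($a \leq 0$) the term $-\frac{a}{p}C(u)\geq 0$ may be dropped as well, leaving $h(t) = \frac{1}{2}t - \frac{\gamma K c^{3/2}}{4}\sqrt{t}$, a quadratic in $\sqrt{t}$ with positive leading coefficient, hence bounded below. In case (ii) ($a>0$, $2<p<4$) one obtains $h(t) = \frac{1}{2}t - \frac{\gamma K c^{3/2}}{4}\sqrt{t} - \frac{a K_{GN}c}{p}\,t^{\frac{p}{2}-1}$; since both negative exponents $\tfrac12$ and $\tfrac{p}{2}-1$ are strictly less than $1$, the dominant term as $t\to\infty$ is $\frac{1}{2}t$, so $h$ is coercive and in particular bounded below.

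The delicate case, which I expect to be the only real obstacle, is (iii) with $p=4$. Here $\tfrac{p}{2}-1 = 1$, so the Gagliardo--Nirenberg term is itself \emph{linear} in $t$ and the lower bound reads $h(t) = \bigl(\frac{1}{2} - \frac{a K_{GN}c}{4}\bigr)t - \frac{\gamma K c^{3/2}}{4}\sqrt{t}$. Boundedness below now hinges entirely on the sign of the leading coefficient, which is strictly positive precisely when $a K_{GN}c < 2$, i.e. $c < \frac{2}{a K_{GN}}$ --- exactly the smallness assumption of Theorem \ref{minimization1}(iii). Under that hypothesis $h$ is again a positive multiple of $t$ minus a multiple of $\sqrt{t}$, hence bounded below, and the three cases together give $m > -\infty$.
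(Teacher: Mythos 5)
Your proposal is correct and follows essentially the same route as the paper: discard the nonnegative term $\frac{\gamma}{4}V_1(u)$, control $V_2$ via \eqref{minoration_V2} and $C$ via the Gagliardo--Nirenberg bound \eqref{minoration_norme_u_L^p}, and reduce each case to the boundedness below of an explicit one-variable function of $A(u)$, with the condition $c<\frac{2}{aK_{GN}}$ entering exactly as you describe when $p=4$. The only difference is cosmetic (you make the substitution $t=A(u)$ explicit), so there is nothing to add.
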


\begin{proof}
\underline{First case: } $a\leq 0$ and $p>2$.\\
Then, since $V_1\geq 0$ and $a\leq 0$, we have for all $u\in X$, using (\ref{minoration_V2})
\begin{equation}
\label{minoration_F_cas1a}
F(u)\geq \frac{1}{2}A(u)-\frac{\gamma}{4}V_2(u)\geq 
\frac{1}{2}A(u)-\frac{\gamma}{4} K \; \sqrt{A(u)}\;c^{\frac{3}{2}},
\end{equation}
whence the result.\\
\underline{Second case: } $a> 0$ and $p<4$.\\
Then, for all $u\in X$, we have, using (\ref{minoration_V2}) and (\ref{minoration_norme_u_L^p})
\begin{equation}
\label{minoration_F_cas1b}
F(u)\geq 
\frac{1}{2}A(u)-\frac{\gamma}{4} K \; \sqrt{A(u)}\;c^{\frac{3}{2}}-\frac{a}{p}K_{GN}\; A(u)^{\frac{p}{2}-1}\;c,
\end{equation}
whence the result since $\frac{p}{2}-1<1$.\\
\underline{Third case: } $a> 0$, $p=4$ and $c < \displaystyle \frac{2}{a K_{GN}}$.\\
From (\ref{minoration_F_cas1b}) we get that
$$
F(u) \leq \Big(\frac{1}{2}- \frac{a}{4}K_{GN}c \Big) A(u) - \frac{\gamma}{4} K \; \sqrt{A(u)}\;c^{\frac{3}{2}}$$
and the result follows here also.
\end{proof}

As a consequence of Lemma \ref{lem_fonctionelle_minoree} and of the convergence results of Section \ref{Preliminary results} we can now give

\begin{proof}[Proof of Theorem \ref{minimization1}]
By Lemma \ref{lem_fonctionelle_minoree} we know that $m >- \infty$.
Now let $(u_n)$ be a minimizing sequence for (\ref{definfimum}). 
Since $(F(u_n))$ is bounded from above we deduce from \eqref{minoration_F_cas1a} or \eqref{minoration_F_cas1b}, that $(u_n)$ is bounded in $H$. Thus we deduce from Lemma \ref{X-bound}
 the existence of $(x_n) \subset \R^2$ such that, if we denote
 $\tilde{u_n}=u_n(\cdot-x_n) ,$
 then, up to a subsequence, we may assume that
$\tilde{u_n} \rightharpoonup u.$ Moreover, recording that the embedding $X\subset L^2(\R^2)$ is compact, we have that $u \in S(c)$. At this point, since $F$ is  invariant by translation, we deduce from Lemma \ref{lem_cv_faible-->cv_forte} that $F(u) = m$ and that $\tilde{u_n} \rightarrow u $ in $X$.
\end{proof}

We end this section by observing that setting
$$\Sigma=\{u\in S(c),\; F(u)=m\}, $$
and for any $R>0$,
$$ \Sigma(R)=\Sigma\cap B_X(0,R)$$
we have 

\begin{lem}
There exists $R>0$ such that $\Sigma=\R^ 2*\Sigma(R).$
\end{lem}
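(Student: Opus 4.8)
The plan is to establish the two inclusions separately. The inclusion $\R^2*\Sigma(R)\subseteq\Sigma$ holds for \emph{every} $R>0$ and costs essentially nothing: interpreting $\R^2*\Sigma(R)$ as the set of all translates $v(\cdot-x)$ with $v\in\Sigma(R)$ and $x\in\R^2$, we note that $\|v(\cdot-x)\|_{L^2}=\|v\|_{L^2}$ and that each of $A$, $V$, $C$, hence $F$, is invariant under translations of $\R^2$. Therefore any translate of an element of $\Sigma$ again lies in $S(c)$ and carries the same energy $m$, so it belongs to $\Sigma$. One also checks that translates stay in $X$, using $\log(1+|y+x|)\leq\log(1+|y|)+\log(1+|x|)$, which gives $|v(\cdot-x)|_*^2\leq|v|_*^2+c\,\log(1+|x|)<\infty$. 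Thus the real content is the reverse inclusion: a \emph{single} radius $R$ such that every minimizer can be translated to have $X$-norm at most $R$, i.e. $\inf_{x\in\R^2}\|u(\cdot-x)\|_X\leq R$ for all $u\in\Sigma$.

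For the reverse inclusion I would argue by contradiction. If no such $R$ existed, then for each $k\in\N$ I could choose $u_k\in\Sigma$ with $\inf_{x\in\R^2}\|u_k(\cdot-x)\|_X>k$. The decisive observation is that $(u_k)$ is itself a minimizing sequence for $m$, since $F(u_k)=m$ for every $k$. Hence Theorem \ref{minimization1} applies directly: up to translation by some $(y_k)\subset\R^2$ and along a subsequence, $\tilde u_k:=u_k(\cdot-y_k)$ converges strongly in $X$ to some limit $u$. In particular $\|\tilde u_k\|_X\to\|u\|_X<\infty$, so the translated norms are bounded, say by a constant $M$.

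Since $\tilde u_k$ is a translate of $u_k$, this yields $\inf_{x\in\R^2}\|u_k(\cdot-x)\|_X\leq\|\tilde u_k\|_X\leq M$ for all large $k$, which contradicts $\inf_{x\in\R^2}\|u_k(\cdot-x)\|_X>k\to\infty$. Consequently some finite $R$ works, and the lemma follows.

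I do not anticipate a serious obstacle, because all the analytic difficulty has already been absorbed into Theorem \ref{minimization1}; the only genuinely new ingredient is the elementary but essential remark that a sequence consisting of minimizers is a minimizing sequence, so that the compactness-up-to-translation statement of Theorem \ref{minimization1} converts into a \emph{uniform} bound on the translated $X$-norms. The one point to handle with care is to keep track of which quantity is being bounded: it is the orbit distance $\inf_{x}\|u(\cdot-x)\|_X$, and not $\|u\|_X$ itself, the latter being genuinely unbounded over $\Sigma$ precisely because $|\cdot|_*$ fails to be translation invariant.
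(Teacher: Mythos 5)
Your proof is correct and follows essentially the same route as the paper: both argue by contradiction, taking minimizers $u_k\in\Sigma$ that cannot be translated into $B_X(0,k)$, observing that a sequence of minimizers is a minimizing sequence, and invoking the compactness-up-to-translation from Theorem \ref{minimization1} to get a bounded (in fact convergent) translated subsequence in $X$, which contradicts the assumed lower bound on the orbit distance. Your explicit treatment of the easy inclusion and of the distinction between $\|u\|_X$ and $\inf_{x}\|u(\cdot-x)\|_X$ is a welcome clarification but does not change the argument.
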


\begin{proof}
We argue by contradiction. Assume that for all $n\geq 1$, there exists $u_n\in \Sigma$ such that $u_n\notin \R^ 2*\Sigma(n)$. Reasoning as in the proof of 
Theorem \ref{minimization1} we deduce that  there exists a sequence $(x_n)$ of $\R^2$ such that 
$$\tilde{u_n}=u_n(\cdot-x_n) $$ has a subsequence bounded in $X$. But by hypothesis, $\tilde{u_n}\notin \Sigma(n)$ so $\|\tilde{u_n}\|_X\geq n$, which is a contradiction. 
\end{proof}

\subsection{Existence of a local minima on $S(c)$.}\label{subsection32}
In this subsection we always assume that $a>0$, $p>4$. We also set 
$$k_0= \frac{(p-2)}{(p-4)}\; \frac{\gamma \,c^ 2}{4}.$$

\begin{lem}\label{natural-bound}
\label{lem_c_0}
Assume that $\gamma >0$, $a >0$ and $p>4$.
If $Q(u)\leq 0$ and $A(u)=k_0$, then 
$c\geq c_0,$
where $c_0$ depends on $p,a$ and $\gamma$ by the following formula :
$$c_0=2\left[\frac{p(p-4)^{\frac{p-4}{2}}}{(p-2)^\frac{p}{2}}\frac{1}{K_{GN}}\frac{1}{a\gamma^{\frac{p-4}{2}}}\right]^ {\frac{1}{p-3}}.$$
As a consequence, if $Q(u)\leq 0$ and $c<c_0$, then
$A(u)\neq k_0.$
\end{lem}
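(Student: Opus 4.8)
The plan is to combine the sign hypothesis $Q(u)\le 0$ with the Gagliardo--Nirenberg estimate \eqref{minoration_norme_u_L^p} and then impose $A(u)=k_0$; the whole statement then reduces to elementary algebra in $c$. There is no conceptual difficulty, only careful bookkeeping of exponents.

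First I would unfold the definition \eqref{defQ} of $Q$. The assumption $Q(u)\le 0$ reads
$$A(u)-\frac{\gamma c^2}{4}\le a\,\frac{p-2}{p}\,C(u),$$
and I would bound the right-hand side from above by \eqref{minoration_norme_u_L^p}, namely $C(u)\le K_{GN}\,A(u)^{\frac{p-2}{2}}\,c$, to obtain
$$A(u)-\frac{\gamma c^2}{4}\le a\,\frac{p-2}{p}\,K_{GN}\,A(u)^{\frac{p-2}{2}}\,c.$$
Note that, since $\frac{p-2}{p-4}>1$, the value $A(u)=k_0$ makes the left-hand side strictly positive, so all subsequent divisions are legitimate.

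Next I would substitute $A(u)=k_0=\frac{p-2}{p-4}\cdot\frac{\gamma c^2}{4}$. A short computation using $\frac{p-2}{p-4}-1=\frac{2}{p-4}$ gives $k_0-\frac{\gamma c^2}{4}=\frac{\gamma c^2}{2(p-4)}$, so the previous inequality becomes
$$\frac{\gamma c^2}{2(p-4)}\le a\,\frac{p-2}{p}\,K_{GN}\,k_0^{\frac{p-2}{2}}\,c.$$
Inserting the explicit value $k_0=\frac{(p-2)\gamma c^2}{4(p-4)}$, the factor $k_0^{\frac{p-2}{2}}$ contributes a power $c^{p-2}$; dividing through by $c^2>0$ and isolating then yields a lower bound for $c^{p-3}$. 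Taking the $(p-3)$-th root (legitimate since $p>4$ gives $p-3>1>0$) should produce exactly $c\ge c_0$.

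The only real work is the exponent bookkeeping in this last step: I would collect the powers of $\gamma$ (which combine to $\gamma^{-\frac{p-4}{2}}$), of $(p-4)$ (combining to $(p-4)^{\frac{p-4}{2}}$), of $(p-2)$ (combining to $(p-2)^{-\frac{p}{2}}$), and the numerical constants (the $4^{\frac{p-2}{2}}=2^{p-2}$ together with the $\tfrac12$ producing the overall factor $2$ in front of $c_0$), checking that they reassemble into the stated formula for $c_0$. Finally, the consequence is immediate by contraposition: since $Q(u)\le 0$ together with $A(u)=k_0$ forces $c\ge c_0$, under the hypothesis $c<c_0$ the equality $A(u)=k_0$ is impossible, that is $A(u)\neq k_0$.
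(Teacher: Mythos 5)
Your proposal is correct and follows essentially the same route as the paper's proof: unfold $Q(u)\le 0$, bound $C(u)$ by the Gagliardo--Nirenberg estimate \eqref{minoration_norme_u_L^p}, substitute $A(u)=k_0$ so that the left-hand side becomes $\frac{\gamma c^2}{2(p-4)}$, and solve for $c^{p-3}$. The exponent bookkeeping you outline (powers $\gamma^{-\frac{p-4}{2}}$, $(p-4)^{\frac{p-4}{2}}$, $(p-2)^{-\frac{p}{2}}$ and the overall factor $2$) checks out and reproduces exactly the stated $c_0$.
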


\begin{proof}
Since $Q(u)\leq 0$, we have 
$$A(u)\leq a \frac{p-2}{p} C(u)+\frac{\gamma\,c^2}{4}$$
and  by Gagliardo-Nirenberg inequality (\ref{GN}), since $A(u)=k_0$,
 we deduce
$$\frac{p-2}{p-4}\;\frac{\gamma\,c^2}{4}\leq a \frac{p-2}{p}K_{GN} \left[\frac{p-2}{p-4}\; \frac{\gamma \,c^ 2}{4} \right]^{\frac{p-2}{2}}\;c +\frac{\gamma\,c^2}{4},$$
then
$$\frac{2}{p-4}\;\frac{\gamma\,c^2}{4}\leq a \frac{p-2}{p}K_{GN} \left[\frac{p-2}{p-4}\right]^{\frac{p-2}{2}} \frac{\gamma^{\frac{p-2}{2}}}{2^{p-2}}\; c^{p-1},$$
so
$$2^{p-3} \frac{p(p-4)^{\frac{p-4}{2}}}{(p-2)^\frac{p}{2}}\frac{1}{K_{GN}}\frac{1}{a\gamma^{\frac{p-4}{2}}}\leq c^{p-3},$$
whence the result.
\end{proof}

Now we set
$$A_{k_0}=\{u\in X,\;A(u)\leq k_0\}$$
and we define 
\begin{equation}\label{minimum-local}
m_l=\underset{S(c)\cap A_{k_0}}{\inf}F.
\end{equation}

\begin{thm}\label{minimization local}
Let $\gamma >0$, $a >0$ and $p>4$.
Assume that $c<c_0,$ then any minimizing sequence for $m_l$ defined in (\ref{minimum-local})
has, up to translations, a subsequence converging strongly in X. In particular the  infimum  is achieved. Also any minimizer of (\ref{minimum-local}) is a critical point of $F$ on $S(c)$.
\end{thm}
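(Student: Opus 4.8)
The plan is to follow the standard constrained minimization scheme, using the compactness results of Section \ref{Preliminary results}, and then to invoke Lemma \ref{natural-bound} to show that the constraint $A(u)\le k_0$ is inactive at the minimizer, which is what forces it to be a genuine critical point on $S(c)$.

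First I would verify that $m_l>-\infty$. On $A_{k_0}$ the quantity $A(u)$ is bounded by $k_0$, so by (\ref{minoration_V2}) and (\ref{minoration_norme_u_L^p}) both $V_2(u)$ and $C(u)$ are bounded on $S(c)\cap A_{k_0}$; since $V_1\ge 0$ and $\gamma>0$, the decomposition $F(u)=\frac12 A(u)+\frac{\gamma}{4}V_1(u)-\frac{\gamma}{4}V_2(u)-\frac{a}{p}C(u)$ then gives a lower bound independent of $u$. Next, given a minimizing sequence $(u_n)\subset S(c)\cap A_{k_0}$, the bound $A(u_n)\le k_0$ together with $\|u_n\|_2^2=c$ shows $(u_n)$ is bounded in $H$, while $F(u_n)$ is bounded from above; hence Lemma \ref{X-bound} provides translations $\tilde u_n=u_n(\cdot-x_n)$ and a subsequence with $\tilde u_n\rightharpoonup u$ weakly in $X$. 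The compact embedding $X\hookrightarrow L^2(\R^2)$ gives $u\in S(c)$, and the weak lower semicontinuity of $A$ gives $A(u)\le\varliminf A(\tilde u_n)\le k_0$, so $u\in S(c)\cap A_{k_0}$.

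Then I would identify the limit as a minimizer and upgrade to strong convergence. Since $F$ is translation invariant, $F(\tilde u_n)=F(u_n)\to m_l$; the lower semicontinuity part of Lemma \ref{lem_cv_faible-->cv_forte} yields $F(u)\le\varliminf F(\tilde u_n)=m_l$, while $u\in S(c)\cap A_{k_0}$ forces $F(u)\ge m_l$, so $F(u)=m_l$ and the infimum is achieved. Because $F(\tilde u_n)\to F(u)$, the second part of Lemma \ref{lem_cv_faible-->cv_forte} then gives $\tilde u_n\to u$ strongly in $X$, which is the asserted convergence of minimizing sequences up to translation.

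The main obstacle is the final claim that any minimizer is a critical point of $F$ on $S(c)$: for this I must show the constraint is not active, i.e. $A(u)<k_0$. Here the hypothesis $c<c_0$ enters through Lemma \ref{natural-bound}, whose contrapositive states that $A(v)=k_0$ forces $Q(v)>0$ for every $v\in S(c)$. Arguing by contradiction, if the minimizer had $A(u)=k_0$, then $Q(u)=g_u'(1)>0$, so the fiber map $t\mapsto g_u(t)=F(u^t)$ is strictly increasing at $t=1$; choosing $t<1$ close to $1$, the dilation satisfies $A(u^t)=t^2k_0<k_0$ by (\ref{scalings}), hence $u^t\in S(c)\cap A_{k_0}$, while $F(u^t)<F(u)=m_l$, contradicting the definition of $m_l$. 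Therefore $A(u)<k_0$. Since $A$ is continuous on $X$, every $v\in S(c)$ close to $u$ in $X$ still satisfies $A(v)<k_0$, so $u$ is a local minimizer of $F$ restricted to $S(c)$; as $F$ is of class $C^1$ on the smooth submanifold $S(c)$, it is therefore a critical point of $F$ constrained to $S(c)$.
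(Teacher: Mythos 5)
Your proof is correct and follows essentially the same route as the paper: boundedness in $H$ from the constraint $A(u)\le k_0$, Lemma \ref{X-bound} and Lemma \ref{lem_cv_faible-->cv_forte} for convergence up to translation and attainment of $m_l$, and then the contradiction argument via Lemma \ref{natural-bound} and the dilation $u^{t_0}$ with $t_0<1$ to rule out $A(u)=k_0$. The extra details you supply (the explicit lower bound for $m_l$ on $S(c)\cap A_{k_0}$ and the observation that the inactive constraint makes $u$ an interior local minimizer, hence a critical point on $S(c)$) are exactly what the paper leaves implicit.
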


\begin{proof}
Let $(u_n)$ be a minimizing sequence for (\ref{minimum-local}). Reasoning exactly as in the proof of Theorem \ref{minimization1} we see that there exists a sequence of ($x_n)\subset \R^2$ such that,
 $\tilde{u_n}=u_n(\cdot-x_n) ,$
converges strongly towards a $u \in X$. Obviously $u \in A_{k_0}$ and $F(u) = m_l$. Thus to end the proof it just remains to show that $u$ satisfies   $A(u) <k_0$.

Let us assume by contradiction that $A(u) =k_0$. Then we see directly from Lemma \ref{natural-bound} that necessarily $Q(u) >0.$  But then we consider $u^{t_0}$ with $t_0 <1$ close to $1$. Recording (\ref{scalings}) and (\ref{LinkF-Q}) it follows that $u^{t_0} \in A_{k_0}$ and $F(u^{t_0}) < F(u)$ providing a contradiction. This ends the proof.
\end{proof}

\subsection{Proof of Theorem \ref{infreachxxxx}.}\label{subsection33}

In this subsection we start to be interested in the multiplicity of solutions. We shall always assume that $a>0$ and $p >4$. For any $u \in S(c)$ we denote  $g_u : (0, \infty) \to X$ the function defined by 
		$$g_u(t)= F(u^t)= \frac{t^2}{2} A(u) + \frac{\gamma}{4} V(u) - \frac{\gamma c^2}{4} \log t 
		-\frac{a t^{p-2}}{p} C(u)
		$$ 
		where $u^t(x)= t u(t x)$ for all $x \in \R^2$. Clearly $g_u$ is $C^2$ on $(0,  \infty)$ and we obviously have
		$$
		\Lambda(c) := \{u \in S(c) \  | \ Q(u) =0\} = \{u \in S(c) \ | \ g_u'(1)=0\}.
		$$
	\begin{lem}
			For any $u \in S(c)$, a value $s\in \R$ is critical for $g_u(t)$ if and only if $u^s \in \Lambda(c)$.
		\end{lem}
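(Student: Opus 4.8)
The plan is to read the equivalence off directly from the scaling identity (\ref{LinkF-Q}), so that the lemma becomes a short bookkeeping statement rather than a genuine computation. First I would record that the dilation action preserves the constraint: since $\|u^t\|_{L^2}^2=\|u\|_{L^2}^2=c$ for every $t>0$, the function $u^s$ lies in $S(c)$ for each $s>0$. This is what makes the phrase ``$u^s\in\Lambda(c)$'' meaningful, and by the very definition $\Lambda(c)=\{v\in S(c)\mid Q(v)=0\}$ it is equivalent to the single scalar condition $Q(u^s)=0$.

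Next, since $g_u(t)=F(u^t)$ by definition, I would differentiate at $t=s$ and invoke (\ref{LinkF-Q}) to obtain
$$
g_u'(s)=\left.\frac{d}{dt}\right|_{t=s}F(u^t)=\frac{Q(u^s)}{s}.
$$
If one prefers a self-contained verification, the scalings (\ref{scalings}) give
$$
g_u(t)=\frac{t^2}{2}A(u)+\frac{\gamma}{4}V(u)-\frac{\gamma c^2}{4}\log t-\frac{a}{p}\,t^{p-2}C(u),
$$
and differentiating yields $g_u'(t)=tA(u)-\tfrac{\gamma c^2}{4t}-a\tfrac{p-2}{p}t^{p-3}C(u)$, which equals $Q(u^t)/t$ once one uses $A(u^t)=t^2A(u)$, $C(u^t)=t^{p-2}C(u)$ and the definition (\ref{defQ}) of $Q$.

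Finally, because $s>0$ the prefactor $1/s$ never vanishes, so $g_u'(s)=0$ if and only if $Q(u^s)=0$, that is, if and only if $u^s\in\Lambda(c)$; this is exactly the claimed equivalence. There is essentially no obstacle in this argument: the only point worth stating explicitly is the mass invariance of the dilation, which is precisely what lets us upgrade the scalar vanishing $Q(u^s)=0$ into membership of $u^s$ in the constraint set $\Lambda(c)\subset S(c)$.
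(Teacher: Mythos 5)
Your proposal is correct and follows essentially the same route as the paper: both compute $g_u'(t)$ via the scaling relations and identify its vanishing at $s>0$ with the condition $Q(u^s)=0$, i.e.\ $u^s\in\Lambda(c)$. The only (harmless) cosmetic difference is that you phrase the key identity as $g_u'(s)=Q(u^s)/s$ from (\ref{LinkF-Q}), whereas the paper writes out the bracket explicitly and recognizes it as $g'_{u^s}(1)$.
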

		\begin{proof}
			Fix $u \in S(c)$. We have 
			$$
			g_u'(t)= \frac{1}{t} \bigl( t^2 A(u) - \frac{\gamma c^2}{4} - a \frac{(p-2)}{p} t^{p-2} C(u) \bigr).
			$$
			Therefore $s >0$ is a critical value for $g_u$ if and only if 
			$$
			s^2 A(u) - \frac{\gamma c^2}{4} - a \frac{(p-2)}{p} s^{p-2} C(u)=0
			$$
			which means
			$$ 
			A(u^s) - \frac{\gamma c^2}{4} - a \frac{(p-2)}{p}C(u^s)=0
			$$
			namely $g'_{u^s}(1)=0$ and thus  $u^s \in \Lambda(c)$.
		\end{proof}

		Now we prove the following lemmas.

	\begin{lem}\label{submanifold1}
			If $c <c_0$, then $\Lambda(c)$ is a submanifold of codimension $2$ of $X$ and a submanifold of codimension $1$ in $S(c)$.
		\end{lem}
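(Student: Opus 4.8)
The plan is to show that $\Lambda(c) = \{u \in X \mid N(u) := \|u\|_2^2 - c = 0, \ Q(u) = 0\}$ is a submanifold of codimension $2$ in $X$ by verifying that $(N, Q): X \to \R^2$ is a submersion at each point of $\Lambda(c)$, i.e. that $dN(u)$ and $dQ(u)$ are linearly independent functionals on $X$ for every $u \in \Lambda(c)$. Since $\Lambda(c) \subset S(c)$ and $S(c) = N^{-1}(0)$ is already a codimension-$1$ manifold (as $dN(u)[v] = 2\int_{\R^2} uv \neq 0$ for $u \neq 0$), it then suffices to show that $dQ(u)$ restricted to the tangent space $T_u S(c) = \ker dN(u)$ is nonzero; this gives the codimension-$1$ statement in $S(c)$ simultaneously.

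First I would compute the two differentials explicitly. Recalling $Q(u) = A(u) - a\frac{p-2}{p}C(u) - \gamma \frac{c^2}{4}$, and that on $S(c)$ the constant $\frac{\gamma c^2}{4}$ contributes nothing to the differential, I get
\begin{equation*}
dQ(u)[v] = 2\int_{\R^2} \nabla u \cdot \nabla v \, dx - a(p-2)\int_{\R^2} |u|^{p-2} u\, v\, dx,
\end{equation*}
so that $dQ(u)[u] = 2A(u) - a(p-2)C(u)$. The natural test direction is $v = u$ itself, but $u \notin T_u S(c)$ in general, so instead I would evaluate $dQ$ along the dilation direction. The key observation is that $\frac{d}{dt}\big|_{t=1} u^t$ is tangent to $S(c)$ (because $\|u^t\|_2^2 = \|u\|_2^2 = c$ is constant in $t$), and by (\ref{LinkF-Q}) and the definition of $Q$ one has $\frac{d}{dt}\big|_{t=1} Q(u^t) = g_u''(1)$ computed from the fiber map. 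Thus showing $dQ(u)$ is nonzero on $T_u S(c)$ reduces to showing $g_u''(1) \neq 0$ for every $u \in \Lambda(c)$, i.e. that $\Lambda^0(c) = \emptyset$.

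The main work, then, is to prove that $\Lambda^0(c) = \emptyset$ when $c < c_0$. Here I would use Lemma \ref{natural-bound}: for $u \in \Lambda(c)$ we have $Q(u) = 0$, and a direct computation gives
\begin{equation*}
g_u''(1) = 2A(u) - a(p-2)(p-3)C(u) = (p-4)A(u) + \Big[2 - (p-2)\Big]\cdots
\end{equation*}
which I would simplify using the constraint $A(u) = a\frac{p-2}{p}C(u) + \gamma\frac{c^2}{4}$ to eliminate $C(u)$; this expresses $g_u''(1)$ as a linear combination of $A(u)$ and the constant $\gamma c^2/4$. Setting $g_u''(1) = 0$ then forces $A(u)$ to equal precisely the critical value $k_0 = \frac{p-2}{p-4}\frac{\gamma c^2}{4}$. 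But by Lemma \ref{natural-bound}, $Q(u) = 0 \le 0$ together with $A(u) = k_0$ forces $c \ge c_0$, contradicting $c < c_0$. Hence $g_u''(1) \neq 0$ on all of $\Lambda(c)$, so $\Lambda^0(c) = \emptyset$ and the submersion condition holds.

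The hard part is getting the algebra in the reduction exactly right: correctly identifying that the relevant nondegeneracy of the map $(N,Q)$ is governed by the second derivative of the fiber map rather than by an independent computation, and then matching $g_u''(1) = 0$ to the borderline case $A(u) = k_0$ of Lemma \ref{natural-bound} so that $c < c_0$ rules it out. Once $\Lambda^0(c) = \emptyset$ is established, the submanifold claim follows from the regular value theorem (implicit function theorem) applied to $(N,Q)$, and the codimension counts fall out directly.
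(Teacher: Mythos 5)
Your endgame is exactly the paper's: degeneracy forces the relation $2A(u)=\frac{a(p-2)^2}{p}C(u)$, which together with $Q(u)=0$ gives $A(u)=k_0$, and Lemma \ref{natural-bound} rules this out when $c<c_0$. (Your displayed formula for $g_u''(1)$ is garbled --- the correct statement, after using $Q(u)=0$, is $g_u''(1)=2A(u)-\frac{a(p-2)^2}{p}C(u)$ --- but that is repairable algebra.) The genuine gap is in the reduction step. You claim that $w:=\frac{d}{dt}\big|_{t=1}u^t$ is a tangent vector to $S(c)$ and that $dQ(u)[w]=\frac{d}{dt}\big|_{t=1}Q(u^t)$. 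Formally $w=u+x\cdot\nabla u$, and for a general $u\in X$ this function need not belong to $X$, nor even to $L^2(\R^2)$: the curve $t\mapsto u^t$ is continuous in $X$ but not differentiable in $X$ without extra regularity and decay of $u$. The scalar function $t\mapsto Q(u^t)=t^2A(u)-a\frac{p-2}{p}t^{p-2}C(u)-\frac{\gamma c^2}{4}$ is of course smooth, but its derivative at $t=1$ is not \emph{a priori} the value of $dQ(u)$ at an admissible tangent vector, so the vanishing of $dQ(u)$ on $T_uS(c)$ does not let you conclude $\frac{d}{dt}\big|_{t=1}Q(u^t)=0$. This is precisely the classical obstruction that makes Pohozaev identities nontrivial.

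The paper circumvents this as follows: if $dG(u)$ and $dQ(u)$ were linearly dependent, then $u$ would be a weak solution of the semilinear equation $-\Delta u-\frac{a(p-2)}{2}|u|^{p-2}u=\nu u$ for some $\nu\in\R$; applying the Pohozaev identity of Lemma \ref{lem_Q=0} (proved by the Berestycki--Lions truncation argument on balls $B_{R_n}(0)$, which requires no differentiability of $t\mapsto u^t$ in $X$) to this equation yields exactly the missing relation $A(u)=\frac{a(p-2)^2}{2p}C(u)$, after which the contradiction with Lemma \ref{natural-bound} proceeds as you describe. So your proposal needs either this detour through the Euler--Lagrange equation and Lemma \ref{lem_Q=0}, or a separate density/regularity argument justifying the use of the dilation direction; as written, the step fails.
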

		
\begin{proof}
By definition, $u \in \Lambda(c)$ if and only if $G(u):=\|u\|_2^2 -c =0$ and $Q(u)=0.$ It is easy to check that $G, Q$ are of $C^1$ class. Hence we only have to prove that for any $u \in \Lambda(c)$,
$$
(dG(u), dQ(u)): X \rightarrow \R^2 \,\, \mbox{is surjective}.
$$
If this failed, we would have that $dG(u)$ and $dQ(u)$ are linearly dependent, which implies that there exists a $\nu \in \R$ such that for any $\varphi \in X$,
$$
 2 \int_{\R^N} \nabla u \cdot \nabla \varphi \, dx
- a(p-2) \int_{\R^N}|u|^{p-2}u \varphi \, dx=  2 \nu \int_{\R^N}u \varphi\,dx,
$$
namely that $u$ solves
$$
- \Delta u  - a\frac{(p-2)}{2} |u|^{p-2}u = \nu u.
$$
At this point from Lemma \ref{lem_Q=0} we deduce that
$$
A(u) = \frac{a(p-2)^2}{2p}C(u)
$$
and then, since  $Q(u) =0$ we obtain that $A(u) =k_0$ which contradicts Lemma \ref{natural-bound}.
\end{proof}
	
	\begin{lem}\label{nondegenerate}
\label{lem_A(u)=k_0_1}
Let $u\in S(c)$ such that $Q(u)=0$ and $\left.\frac{d}{dt}\right|_{t=1}\;Q(u^t)=0$.
Then $ A(u)=k_0.$
\end{lem}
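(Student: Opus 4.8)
The plan is to convert the two hypotheses into a pair of linear relations between $A(u)$ and $C(u)$ and then eliminate $C(u)$; the conclusion $A(u)=k_0$ drops out of solving the resulting $2\times 2$ system.

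First I would make the fiber-map computation explicit. Using the scaling identities (\ref{scalings}) together with the definition (\ref{defQ}) of $Q$, one has
$$Q(u^t) = t^2 A(u) - a\frac{p-2}{p}\, t^{p-2} C(u) - \gamma\frac{c^2}{4}.$$
Differentiating in $t$ and evaluating at $t=1$ gives
$$\left.\frac{d}{dt}\right|_{t=1} Q(u^t) = 2A(u) - a\frac{(p-2)^2}{p}\, C(u),$$
so the second hypothesis is precisely the linear relation $2A(u) = a\frac{(p-2)^2}{p}C(u)$. It is worth noting, for the forthcoming analysis of $\Lambda^0(c)$, that since $g_u'(t)=Q(u^t)/t$ by (\ref{LinkF-Q}), this quantity coincides with $g_u''(1)$ whenever $Q(u)=0$; thus the present lemma identifies the elements of $\Lambda^0(c)$ as exactly those with $A(u)=k_0$, which is what makes it combine with Lemma \ref{natural-bound} to give $\Lambda^0(c)=\emptyset$ for $c<c_0$.

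Next I would record the first hypothesis $Q(u)=0$ in the form
$$A(u) = a\frac{p-2}{p}\, C(u) + \gamma\frac{c^2}{4}.$$
From the derivative relation above one obtains $a\frac{p-2}{p}C(u) = \frac{2}{p-2}A(u)$ (divide the identity $a\frac{(p-2)^2}{p}C(u)=2A(u)$ through by $p-2$). Substituting this into the displayed expression for $A(u)$ yields
$$A(u)\Bigl(1 - \frac{2}{p-2}\Bigr) = \gamma\frac{c^2}{4}, \qquad \text{i.e.} \qquad A(u)\,\frac{p-4}{p-2} = \gamma\frac{c^2}{4},$$
whence $A(u) = \frac{p-2}{p-4}\cdot\frac{\gamma c^2}{4} = k_0$.

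There is essentially no obstacle here: both hypotheses are linear in $A(u)$ and $C(u)$ with coefficients depending only on $p$, $a$, $\gamma$, $c$, and eliminating $C(u)$ is immediate. The only point demanding a little care is the bookkeeping of the factors $\frac{p-2}{p}$ and $\frac{(p-2)^2}{p}$ during the elimination, which relies on $p>2$ (so that division by $p-2$ is legitimate) and produces the factor $\frac{p-2}{p-4}$ that matches the definition of $k_0$.
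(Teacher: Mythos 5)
Your proposal is correct and follows essentially the same route as the paper: compute $\left.\frac{d}{dt}\right|_{t=1}Q(u^t)=2A(u)-a\frac{(p-2)^2}{p}C(u)$ from the scaling identities, deduce $a\frac{p-2}{p}C(u)=\frac{2}{p-2}A(u)$, and substitute into $Q(u)=0$ to get $\bigl(1-\frac{2}{p-2}\bigr)A(u)=\gamma\frac{c^2}{4}$, i.e. $A(u)=k_0$. The elimination is identical to the paper's; your added remark linking the hypothesis to $g_u''(1)$ and the emptiness of $\Lambda^0(c)$ is accurate context but not part of the proof itself.
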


\begin{proof}
First, a simple computation shows that 
$$\left.\frac{d}{dt}\right|_{t=1}\;Q(u^t)=2A(u)-a\frac{(p-2)^2}{p}\; C(u).$$
So by hypothesis, $$a\frac{p-2}{p}\;C(u)=\frac{2}{p-2}\; A(u).$$
But we also know that $Q(u)=A(u)-a\;\frac{p-2}{p}\; C(u)-\gamma\;\frac{c^ 2}{4}=0$, so
$$\left(1-\frac{2}{p-2}\right)\; A(u)=\gamma\;\frac{c^ 2}{4},$$ i.e.
$A(u)=k_0.$
\end{proof}	
		
		Let us denote 
		$$
		\Lambda^+(c) = \{ u \in S(c) \ |\ g'_u(1)=0, \ g''_u(1) > 0 \}.
		$$
		$$
		\Lambda^-(c) = \{ u \in S(c) \ |\ g'_u(1)=0, \ g''_u(1) < 0 \}.
		$$
		$$
		\Lambda^0(c) = \{ u \in S(c) \ |\ g'_u(1)=0, \ g''_u(1) = 0 \}.
		$$
Observe that  $\Lambda^0(c) = \emptyset$  when $c < c_0$ by Lemmas \ref{natural-bound} and	\ref{nondegenerate}.
		\bigskip
		\begin{lem}\label{intersection}
			Let $c < c_0$. For any $u \in S(c)$, there exists 
			\begin{enumerate}
			\item a unique $s_u^+ >0$ such that $ u^{s_u^+} \in  \Lambda^+(c)$.
			Such $s_u^+$ is a strict local minimum point for $g_u$.
			\item a unique $s_u^- >0$ such that  $u^{s_u^-}  \in  \Lambda^-(c)$.
			Such $s_u^-$ is a strict local maximum point for $g_u$.
			\end{enumerate} 
		\end{lem}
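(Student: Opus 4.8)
The plan is to reduce everything to the one-variable calculus of the fiber map $g_u$ by introducing the auxiliary function $h(t):=t\,g_u'(t)=Q(u^t)$. From the explicit expression for $g_u$ (equivalently from the scalings \eqref{scalings}) one computes
$$
h(t)=t^2 A(u)-\frac{\gamma c^2}{4}-a\,\frac{p-2}{p}\,t^{p-2}C(u),
$$
so that the critical points of $g_u$ in $(0,\infty)$ are precisely the zeros of $h$. Since $g_u'(t)=h(t)/t$ and $t>0$, at any such zero $s$ we have $g_u''(s)=h'(s)/s$, and because $g_{u^s}(t)=g_u(st)$ gives $g_{u^s}'(1)=s\,g_u'(s)$ and $g_{u^s}''(1)=s^2 g_u''(s)$, it follows that $u^s\in\Lambda^+(c)$ (resp. $\Lambda^-(c)$) exactly when $h(s)=0$ and $h'(s)>0$ (resp. $h'(s)<0$). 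First I would record the qualitative shape of $h$: since $\gamma,c>0$ we have $h(0^+)=-\gamma c^2/4<0$, while $p>4$ makes the last term dominate, so $h(t)\to-\infty$ as $t\to\infty$. Moreover
$$
h'(t)=t\Bigl(2A(u)-a\,\frac{(p-2)^2}{p}\,t^{p-4}C(u)\Bigr)
$$
vanishes at a unique $t_*>0$ (here $p>4$ and $A(u),C(u)>0$ are used), with $h'>0$ on $(0,t_*)$ and $h'<0$ on $(t_*,\infty)$. Hence $t_*$ is the unique global maximum of $h$, and $h$ possesses exactly two zeros $s_u^+<t_*<s_u^-$ if and only if $h(t_*)>0$.

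The crux is therefore the single positivity statement $h(t_*)>0$, and this is exactly where the hypothesis $c<c_0$ enters, via Lemma \ref{natural-bound}. Since $t\mapsto A(u^t)=t^2A(u)$ is a strictly increasing bijection of $(0,\infty)$ onto itself, there is a unique $\bar t>0$ with $A(u^{\bar t})=k_0$. If we had $h(t_*)\le0$, then $Q(u^t)=h(t)\le0$ for every $t>0$; in particular $Q(u^{\bar t})\le0$ together with $A(u^{\bar t})=k_0$ would force $c\ge c_0$ by Lemma \ref{natural-bound}, contradicting $c<c_0$. Therefore $h(t_*)>0$, and $h$ has exactly two simple zeros $s_u^+<s_u^-$.

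It then remains only to translate back. Because $s_u^+$ lies on the increasing branch of $h$ and $s_u^-$ on the decreasing branch, we have $h'(s_u^+)>0$ and $h'(s_u^-)<0$, whence $g_u''(s_u^+)>0$ and $g_u''(s_u^-)<0$, giving $u^{s_u^+}\in\Lambda^+(c)$ and $u^{s_u^-}\in\Lambda^-(c)$; uniqueness of each is immediate since $h$ has no other zeros and $g_u''$ vanishes at neither. Finally, from $g_u'=h/t$ the sign of $g_u'$ on $(0,s_u^+)$, $(s_u^+,s_u^-)$, $(s_u^-,\infty)$ is negative, positive, negative, so $g_u$ strictly decreases, then strictly increases, then strictly decreases; this makes $s_u^+$ a strict local minimum and $s_u^-$ a strict local maximum of $g_u$. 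The only real obstacle in this argument is the inequality $h(t_*)>0$: the shape analysis is elementary calculus, but this positivity is precisely the quantitative threshold encoded in $c<c_0$ and delivered by Lemma \ref{natural-bound}.
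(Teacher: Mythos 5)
Your proof is correct. It is at heart the same argument as the paper's: both identify the same turning point $t_*=t^*=\bigl[2pA(u)/(a(p-2)^2C(u))\bigr]^{1/(p-4)}$ and both reduce the whole lemma to the single positivity statement $Q(u^{t^*})>0$, which is where $c<c_0$ enters through the Gagliardo--Nirenberg inequality. Your packaging, however, is genuinely tidier in two places. First, the paper establishes that positivity by redoing the Gagliardo--Nirenberg computation from scratch in display \eqref{alll}, whereas you obtain it by contradiction from Lemma \ref{natural-bound} applied to the dilate $u^{\bar t}$ with $A(u^{\bar t})=k_0$; this is legitimate, since dilations preserve $S(c)$ and that lemma's hypothesis is exactly $Q\le 0$ together with $A=k_0$, and it avoids duplicating a computation the paper has already carried out. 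Second, by working with $h(t)=t\,g_u'(t)=Q(u^t)$, whose strict unimodality (increasing on $(0,t_*)$, decreasing afterwards, negative at both ends) yields exactly two simple zeros once $h(t_*)>0$, you obtain existence, uniqueness, and the signs of $g_u''$ in one stroke via $g_u''(s)=h'(s)/s$ at any zero $s$ of $h$; the paper instead runs a case-by-case contradiction argument about alternating local extrema to get uniqueness. The only hypotheses you use implicitly are $A(u)>0$ and $C(u)>0$, which hold for every $u\in S(c)$ with $c>0$, so there is no gap.
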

		\begin{proof}
			Fix $u \in S(c)$ with $c < c_0$. Let $t^* = \bigl[\frac{2p A(u)}{a(p-2)^2 C(u)} \bigr]^{1/(p-4)}$,  which means
			$$
			2 ({t^*})^2 A(u) = \frac{a(p-2)^2}{p} ({t^*})^{p-2} C(u)
			$$
			namely
			$$
			2 A(u^{t^*}) = \frac{a(p-2)^2}{p} C(u^{t^*}).
			$$
			It follows that 
			\begin{equation}\label{prima}
			2 A(u^t) > \frac{a(p-2)^2}{p} C(u^t), \quad \forall \, 0 < t < t^*
			\end{equation}	
			and 
			\begin{equation}\label{seconda}
			2 A(u^t) <\frac{a(p-2)^2}{p} C(u^t), \quad \forall  t > t^*.
			\end{equation}	
			By $(\ref{prima})$ we have that for any $t \in (0, t^*),$ 
			\begin{align}
g_u'(t) & = \frac{1}{t} 
			\bigl( A(u^t) - \frac{\gamma c^2}{4} - a \frac{(p-2)}{p} C(u^t) \bigr) \nonumber \\
& >  \frac{1}{t}
			\bigl( A(u^t) - \frac{\gamma c^2}{4} -  \frac{2}{(p-2)} A(u^t) \bigr)   = \frac{1}{t}
			\Bigl( \frac{(p-4)}{(p-2)} A(u^t) - \frac{\gamma c^2}{4}  \Bigr). 
\label{key}
\end{align}
			Now we prove that if  $c < c_0 $, then 
			\begin{equation}\label{sso}
			A(u^{t^*}) > \frac{\gamma c^2}{4}\frac{(p-2)}{(p-4)} =k_0. 
			\end{equation}
			In fact, taking into account the Gagliardo-Nirenberg inequality, 
			we have 		
			\begin{align}
(t^*)^2  A(u) \frac{(p-4)}{(p-2)} - \frac{\gamma c^2}{4}  & = \bigl[ \frac{2p A(u)}{a(p-2)^2 C(u)} \bigr]^{\frac{2}{p-4}}
			A(u) \frac{p-4}{p-2} - \frac{\gamma c^2}{4}  \nonumber \\
& =  \frac{\bigl(A(u)\bigr)^{\frac{p-2}{p-4}}}{ \bigl(C(u)\bigr)^{\frac{2}{p-4}} }
			\bigl[ \frac{2 p}{a(p-2)^2} \bigr]^{\frac{2}{p-4}}
			\frac{(p-4)}{(p-2)} - \frac{\gamma c^2}{4} \nonumber \\
& \geq \frac{\bigl(A(u)\bigr)^{\frac{p-2}{p-4}}}{
				\bigl[K_{G N} A(u)^{\frac{p-2}{2}} c \bigr]^{\frac{2}{p-4}}}
			\bigl[ \frac{2 p}{a(p-2)^2} \bigr]^{\frac{2}{(p-4)}}
			\frac{(p-4)}{(p-2)} - \frac{\gamma c^2}{4} \nonumber \\
& \geq c^2  \bigl( c^{\frac{2(3-p)}{(p-4)}}
			\bigl[ \frac{2 p}{a(p-2)^2  K_{G N}} \bigr]^{\frac{2}{(p-4)}}
			\frac{(p-4)}{(p-2)} - \frac{\gamma}{4} \bigr) \nonumber \\
& = c^2  \bigl( c^{\frac{2(3-p)}{(p-4)}}
			\bigl[ \frac{2 p}{a K_{G N}} \bigr]^{\frac{2}{(p-4)}}
			\frac{(p-4)}{(p-2)^{\frac{p}{p-4}}} - \frac{\gamma}{4} \bigr)  > 0
\label{alll}
\end{align}
since
			$$
			c < c_0 = 
			2 \ \Bigl[ \frac{p(p-4)^{\frac{p-4}{2}}}{(p-2)^{p/2}} \frac{1}{K_{G N}}  \frac{1}{a \gamma^{\frac{p-4}{2}}}  \Bigr]^{\frac{1}{p-3}}.
			$$
			It follows that there exists $\delta >0$ such that for any $t \in (t^* - \delta, t^*)$ 
			$$
			A(u^{t}) > k_0. 
			$$
			By $(\ref{key})$ we infer that for any $t \in (t^* - \delta, t^*)$,
			$g_u'(t) >0$ and thus $g_u(t)$ is increasing in $(t^*-\delta, t^*)$.
			
			Taking into account that the function $g_u(t) \to + \infty $ as $t \to 0^+$ and $g_u(t) \to - \infty$ as $t \to + \infty$,
			we conclude that there exists at least a critical point $s_u^+ < t^*$ which is a local minimum point of $g_u$ and a critical point $s_u^- > t^*$ which is a local maximum point of $g_u$. We first consider $s_u^- >0$. Since  $s_u^- >t^*$, from  $(\ref{seconda})$ we derive that
			\begin{equation}\label{dis}
			2 (s_{u}^-)^2 A(u)  - a \frac{(p-2)^2}{p} (s_{u}^-)^{p-2} C(u) <0.
			\end{equation}
			Moreover from $(\ref{dis})$ and the fact that $g_u'(s_{u}^-)=0$, we derive that 	 	
			\begin{eqnarray}\label{pos}
			g_u''(s_{u}^-)=  \frac{1}{(s_{u}^-)^2} 
			\bigl(A(u^{s_{u}^-}) + \frac{\gamma c^2}{4} - a \frac{(p-2)(p-3)}{p} C(u^{s_{u}^-}) \bigr) \nonumber \\ 
			= \frac{1}{(s_{u}^-)^2} 
			\bigl( 2 (s_{u}^-)^2 A(u)  - a \frac{(p-2)^2}{p} (s_{u}^-)^{p-2} C(u) \bigr) < 0.
			\end{eqnarray}
			Therefore $s_{u}^-$ is a strict maximum point for $g_u$ 	
			and $u^{s_{u}^-} \in \Lambda^-(c)$.
			
			We have to show that $s_{u}^-$ is unique.
			By contradiction we assume that 
			there exists  $z_u>0$ an other critical point of $g_u$ which is a local maximum point.

			Firstly we observe that if $0< z_u <t^*$, then from  $g_u'(z_u)=0$ and $(\ref{prima})$ it results 
			\begin{eqnarray}\label{nib}
			g_u''(z_u)
			= \frac{1}{z_u^2} 
			\bigl( 2 z^2_u A(u)  - a \frac{(p-2)^2}{p} z_u^{p-2} C(u) \bigr) > 0
			\end{eqnarray}
			which is a contradiction.
			This implies that $z_u> t^*$ and thus arguing as before we have 
			$g_u''(z_u) < 0$ namely  $u^{z_u} \in \Lambda^-(c)$. 
			We derive the existence of an other critical point $\theta_u >t^*$, which is a local minima for $g_u$.
			Taking into account $(\ref{seconda})$, we again deduce
			$g_u''(\theta_u) < 0$, which is a contradiction.
			Therefore the point $s_u$ is unique.
			
			Now a direct adaptation of the argument used for $s_u^- >0$ leads to conclude that $s_u^+ >0$ is the unique local minimum point for $g_u$.
		\end{proof}

		\begin{lem}\label{regularite}
			Let $c < c_0$. The maps $u \in S(c) \mapsto s_{u^+} \in \R$ and $u \in S(c) \mapsto s_{u^-}\in \R$ are of class $C^1$.
		\end{lem}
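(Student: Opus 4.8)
The plan is to read off the regularity of $u \mapsto s_u^{\pm}$ from the Implicit Function Theorem, applied to the scalar equation characterizing the critical points of the fiber map $g_u$. Since $c > 0$, the map $u \mapsto \|u\|_2^2$ is smooth on $X$ with nonvanishing differential on $S(c)$, so $S(c)$ is a $C^1$ submanifold of $X$, and it suffices to produce, near each point, a local $C^1$ solution of the critical point equation.

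First I would introduce $H : S(c) \times (0,\infty) \to \R$ defined by
$$
H(u,t) = t^2 A(u) - \frac{\gamma c^2}{4} - a\,\frac{p-2}{p}\,t^{p-2} C(u) = t\, g_u'(t),
$$
so that, for fixed $u$, the positive zeros of $H(u,\cdot)$ are exactly the critical points of $g_u$; in particular $H(u, s_u^+) = H(u, s_u^-) = 0$. The functionals $A$ and $C$ are of class $C^1$ on $X$, hence on $S(c)$, by Lemma \ref{sec:comp-cond}(ii), while $t \mapsto t^2$ and $t \mapsto t^{p-2}$ are $C^1$ on $(0,\infty)$ for $p > 4$; therefore $H$ is jointly of class $C^1$ on $S(c) \times (0,\infty)$.

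The decisive step is the transversality condition $\partial_t H(u, s_u^{\pm}) \neq 0$. Differentiating gives $\partial_t H(u,t) = g_u'(t) + t\, g_u''(t)$, so at any critical point $s$ of $g_u$ one has $\partial_t H(u,s) = s\, g_u''(s)$. By Lemma \ref{intersection}, $s_u^+$ is a strict local minimum and $s_u^-$ a strict local maximum of $g_u$; moreover the assumption $c < c_0$ forces $\Lambda^0(c) = \emptyset$ (Lemmas \ref{natural-bound} and \ref{nondegenerate}), so these critical points are nondegenerate, namely $g_u''(s_u^+) > 0$ and $g_u''(s_u^-) < 0$. Consequently $\partial_t H(u, s_u^+) > 0$ and $\partial_t H(u, s_u^-) < 0$, and in both cases the partial derivative with respect to $t$ is invertible. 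This transversality, which is precisely where the restriction $c < c_0$ enters, is the only genuinely delicate point; the $C^1$ dependence of $A$ and $C$ on $u$ is routine.

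With both ingredients in hand, the Implicit Function Theorem applies at every pair $(u_0, s_{u_0}^{\pm})$: it yields a neighborhood $U$ of $u_0$ in $S(c)$ and a unique $C^1$ function $\sigma : U \to (0,\infty)$ with $\sigma(u_0) = s_{u_0}^{\pm}$ and $H(u, \sigma(u)) = 0$ on $U$. The uniqueness of $s_u^+$ (resp. $s_u^-$) guaranteed by Lemma \ref{intersection} then forces $\sigma(u) = s_u^+$ (resp. $\sigma(u) = s_u^-$) for all $u \in U$, so the globally defined map $u \mapsto s_u^+$ (resp. $u \mapsto s_u^-$) agrees near each point with a $C^1$ function and is therefore of class $C^1$ on all of $S(c)$.
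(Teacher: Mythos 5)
Your proof is correct and follows essentially the same route as the paper: an application of the Implicit Function Theorem to the zero set of $t\,g_u'(t)$ (the paper uses $\Psi(s,u)=g_u'(s)$, which has the same zeros and the same transversality condition $g_u''(s_u^\pm)\neq 0$ coming from $\Lambda^0(c)=\emptyset$). Your additional remark that the local IFT branch must coincide with $s_u^\pm$ by the uniqueness in Lemma \ref{intersection} is a detail the paper leaves implicit, but it is the right way to globalize the conclusion.
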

		\begin{proof}
				It is a direct application of the Implicit Function Theorem on the $C^1$ function
				$\Psi : \R \times S(c)  \to \R$, defined by  				
				 $\Psi(s,u)= g'_u(s)$, taking into account that  $\Psi(s^\pm_u,u)=0$, $\partial_s \Psi(s^+_u,u) = g_u''(s^+_u) >0$, 
$\partial_s \Psi(s^+_u,u) = g_u''(s^-_u) <0$ and $\Lambda^0(c)= \emptyset$.		
		\end{proof}

\begin{lem}\label{coercive}
$F$ restricted to $\Lambda(c)$ is coercive on $H$ and bounded from below by a positive constant. 
\end{lem}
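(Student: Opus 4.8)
The plan is to use the defining constraint $Q(u)=0$ to eliminate the sign–indefinite, superquadratic term $C(u)$ from $F$, reducing the estimate to a one–variable bound in $A(u)$. Concretely, for $u\in\Lambda(c)$ the identity $Q(u)=0$ from \eqref{defQ} reads $A(u)-a\frac{p-2}{p}C(u)-\gamma\frac{c^2}{4}=0$, so that $\frac{a}{p}C(u)=\frac{1}{p-2}(A(u)-\frac{\gamma c^2}{4})$. Substituting this into $F(u)=\frac12A(u)+\frac\gamma4 V(u)-\frac ap C(u)$ and collecting terms gives, for every $u\in\Lambda(c)$,
$$
F(u)=\frac{p-4}{2(p-2)}\,A(u)+\frac{\gamma}{4}\,V(u)+\frac{\gamma c^2}{4(p-2)}.
$$
This is the key formula: since $p>4$ the coefficient of $A(u)$ is strictly positive, and the last summand is a strictly positive constant.

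Next I would bound $V(u)$ from below. Writing $V(u)=V_1(u)-V_2(u)$ and using both $V_1\ge 0$ and the estimate \eqref{minoration_V2}, namely $|V_2(u)|\le K\sqrt{A(u)}\,c^{3/2}$, one obtains
$$
F(u)\ \ge\ \frac{p-4}{2(p-2)}\,A(u)-\frac{\gamma K c^{3/2}}{4}\,\sqrt{A(u)}+\frac{\gamma c^2}{4(p-2)}.
$$
Setting $\tau=\sqrt{A(u)}$, the right–hand side is a quadratic $\alpha\tau^2-\beta\tau+\delta$ with $\alpha=\frac{p-4}{2(p-2)}>0$, $\beta=\frac{\gamma K c^{3/2}}4>0$ and $\delta=\frac{\gamma c^2}{4(p-2)}>0$. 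Since $\alpha>0$, the expression tends to $+\infty$ as $A(u)\to\infty$; because $\|u\|^2=A(u)+c$ on $S(c)$, this is precisely the coercivity of $F$ restricted to $\Lambda(c)$ with respect to the $H$–norm. This settles the first assertion.

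For the lower bound by a \emph{positive} constant I would first note that $\Lambda(c)$ stays away from $A(u)=0$: indeed $Q(u)=0$ together with $a>0$ and $C(u)\ge0$ forces $A(u)\ge\frac{\gamma c^2}{4}$, so $\tau$ ranges only over $[\frac{\sqrt\gamma\,c}{2},\infty)$, and it remains to see that $\alpha\tau^2-\beta\tau+\delta$ is bounded below by a positive number on this half–line. I expect this to be the main obstacle. The crude bound $V(u)\ge -V_2(u)$ is lossy exactly in the regime where $C(u)$ is small and the mass of $u$ spreads out, where in fact $V_1(u)\to+\infty$ and hence $F(u)\to+\infty$; so the elementary quadratic estimate alone only yields positivity up to a smallness condition on $\gamma c$, and the genuinely robust way to obtain a positive infimum is to combine it with this spreading mechanism. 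The clean route is by contradiction: if $\inf_{\Lambda(c)}F=\ell\le 0$, take a minimizing sequence $(u_n)\subset\Lambda(c)$; coercivity bounds $A(u_n)$, whence $V_2(u_n)$ and then $V_1(u_n)$ are bounded through the key formula, so Lemmas \ref{lem_V1(u_n)_bornee} and \ref{X-bound} give, up to translation, a strong $L^2$ limit and $X$–boundedness, and the lower semicontinuity in Lemma \ref{lem_cv_faible-->cv_forte} produces a limit whose energy is $\le\ell$, which one then shows is incompatible with $Q$ and the constraint $A(u)\ge\frac{\gamma c^2}{4}$. Carrying out this last compactness step carefully, so as to genuinely rule out non‑positive limiting values, is where the real work lies.
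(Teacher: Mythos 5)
Your coercivity argument is exactly the paper's: the authors likewise insert $\frac{a}{p}C(u)=\frac{1}{p-2}\bigl(A(u)-\frac{\gamma c^2}{4}\bigr)$ from $Q(u)=0$, drop $V_1(u)\ge 0$, invoke (\ref{minoration_V2}), and arrive at $F(u)\ge \bigl[\frac12-\frac1{p-2}\bigr]A(u)-\frac{\gamma K c^{3/2}}{4}A(u)^{1/2}$ (they even discard the positive constant $\frac{\gamma c^2}{4(p-2)}$ that you keep), and then conclude with ``since $p>4$''. So on the coercivity claim you and the paper coincide.

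Where you diverge is on the clause ``bounded from below by a positive constant'', and your skepticism there is justified: the final displayed estimate is a quadratic in $\sqrt{A(u)}$ whose minimum, $\frac{\gamma c^2}{4(p-2)}-\frac{\gamma^2K^2c^3(p-2)}{32(p-4)}$, is positive only under a smallness condition on $\gamma c$ that is nowhere assumed (and is not implied by $c<c_0$, since $c_0\to\infty$ as $a\to 0$); restricting to $A(u)\ge\frac{\gamma c^2}{4}$, which you correctly extract from $Q(u)=0$, does not repair this. The paper simply does not address the point, so you have identified a real soft spot in the source rather than missed an argument. That said, your proposed compactness detour is not a proof either: you explicitly leave it uncompleted, and the natural obstruction is that the weak limit of a minimizing sequence need not belong to $\Lambda(c)$, because $A$ (hence $Q$) is only weakly lower semicontinuous; so the final ``incompatible with $Q$'' step is not available without something like Lemma \ref{convergence}, which requires a Palais--Smale sequence rather than a mere minimizing one. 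For what the lemma actually delivers downstream --- $\inf_{\Lambda(c)}F>-\infty$ so that $\gamma^{\pm}(c)$ is well defined, coercivity, and the bound $A(u)\ge\frac{\gamma c^2}{4}$ on $\Lambda(c)$ used in Lemma \ref{ps} --- your argument, like the paper's, suffices; the strict positivity is invoked only through the hypothesis $0<e^{+}_{\mathcal{G}}$ in Lemma \ref{psbisl}, and neither your proposal nor the paper's proof establishes it.
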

\begin{proof}
Firstly we observe that if $u \in \Lambda(c)$,	then  
\begin{equation}\label{3}
C(u) = \frac{p}{a(p-2)} \Big[ A(u) - \frac{\gamma c^2}{4}\Big].
\end{equation}
Taking into account that $\gamma V_1(u) \geq 0$ and (\ref{minoration_V2}), we get that
\begin{equation*}
F(u) \geq \frac{1}{2} A(u) - \frac{\gamma c^{3/2}}{4} A(u)^{\frac{1}{2}} - \frac{1}{p-2}\Big[A(u) - \frac{\gamma c^2}{4}\Big] \geq \Big[\frac{1}{2} - \frac{1}{p-2}\Big] A(u) - \frac{\gamma c^{3/2}}{4} A(u)^{\frac{1}{2}}.
\end{equation*}
Since $p >4$, this concludes the proof.
\end{proof}

	In view of Lemma \ref{coercive} we can define
	$$\gamma^+(c) := \inf_{\Lambda^+(c)} F(u) \quad \mbox{and} \quad \gamma^-(c) := \inf_{\Lambda^-(c)} F(u).$$
	
	Aiming to prove Theorem \ref{infreachxxxx} we shall establish the existence of a Palais-Smale sequence $(u_n) \subset \Lambda^{+}(c)$ (respectively $(u_n) \subset \Lambda^{-}(c)$)  for $F$ restricted to $S(c)$. Our arguments are inspired from \cite{BaSo2}.

We start by recalling the following definition \cite[Definition 3.1]{Gh}.
\begin{definition}
Let $B$ be a closed subset of a metric space $Y$. We say that a class $\mathcal{G}$ of compact subsets of $Y$ is a homotopy stable family with closed boundary $B$ provided
\begin{enumerate}
\item every set in $\mathcal{G}$ contains $B$;
\item for any $A\in \mathcal{G}$ and any $\eta \in C([0,1]\times Y, Y)$ satisfying $\eta (t,x)=x$ for all $(t,x)\in (\{0\}\times Y)\cup ([0,1]\times B)$, we have $\eta (\{1\} \times A)\in \mathcal{G}$.
\end{enumerate}
\end{definition}
We explicitly observe that $B=\varnothing$ is admissible. Now we define the two functionals 
$$I^+ : S(c) \mapsto \R \quad \mbox{by} \quad I^+(u) = F(u^{s_{u}^+})$$
and  $$ I^- : S(c) \mapsto \R \quad \mbox{by} \quad I^-(u) = F(u^{s_{u}^-}).$$
Note that since the maps $u \mapsto s_{u^{+}}$  and $u \mapsto s_{u^{-}}$ are of class $C^1$, see Lemma \ref{regularite}, the functionals $I^+$ and $I^-$ are of class $C^1$.
\begin{lem}\label{isomorphism}
The maps $T_u S(c)\rightarrow T_{ u^{s^+_{u}}} S(c)$ defined by  $\psi \rightarrow  \psi^{s^+_{u}}$  and
$T_u S(c)\rightarrow T_{ u^{s^-_{u}}} S(c)$ defined by  $\psi \rightarrow \psi^{s^-_{u}} $
 are isomorphisms. 
\end{lem}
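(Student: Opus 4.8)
The plan is to prove this by exhibiting explicit inverse maps and showing both maps are bounded linear bijections between the relevant tangent spaces. Recall that for $v \in S(c)$ the tangent space is $T_v S(c) = \{\psi \in X \mid \langle v, \psi\rangle_{L^2} = 0\}$, i.e. the $L^2$-orthogonal complement of $v$. First I would fix $u \in S(c)$, write $s^+ := s^+_u$ for brevity, and note that the scaling $\psi \mapsto \psi^{s^+}$, where $\psi^{s^+}(x) = s^+ \psi(s^+ x)$, is manifestly linear in $\psi$. The candidate inverse is the reverse scaling $\phi \mapsto \phi^{1/s^+}$, since $(\psi^{s^+})^{1/s^+} = \psi$ by the group action property $(\,\cdot\,)^{st} = ((\,\cdot\,)^{s})^{t}$ already recorded in the discussion preceding \eqref{scalings}.

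The key steps are then: (1) verify that $\psi \mapsto \psi^{s^+}$ maps $T_u S(c)$ into $T_{u^{s^+}} S(c)$, which follows because the dilation preserves $L^2$ inner products, $\langle u^{s^+}, \psi^{s^+}\rangle_{L^2} = \langle u, \psi\rangle_{L^2}$ (a direct change of variables $y = s^+ x$), so that $\psi \perp u$ in $L^2$ forces $\psi^{s^+} \perp u^{s^+}$; (2) check boundedness of $\psi \mapsto \psi^{s^+}$ as a map on $X$ using the scaling behaviour of each term in $\|\cdot\|_X$: the gradient term scales as $(s^+)^2$, the $L^2$ term is invariant, and the weighted term $|\psi|_*^2 = \int \log(1+|x|)\psi^2\,dx$ transforms by a change of variables into $\int \log(1 + |y|/s^+)\psi^2(y)\,dy$, which is controlled by a constant (depending on $s^+$) times $\|\psi\|_X^2$ since $\log(1+|y|/s^+) \le \log(1+|y|) + |\log s^+|$ or a similar elementary estimate; (3) observe that $\phi \mapsto \phi^{1/s^+}$ is bounded and maps back into $T_u S(c)$ by the identical argument with $s^+$ replaced by $1/s^+$; and (4) conclude that the two maps are mutually inverse continuous linear bijections, hence isomorphisms. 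The identical argument with $s^-$ in place of $s^+$ handles the second map.

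The main subtlety, and the only place requiring genuine care, is the boundedness in step (2): unlike the purely Sobolev norm, the weighted logarithmic part $|\cdot|_*$ is not scale-invariant, so one must confirm that dilation by the fixed factor $s^+$ (which depends on $u$ but is a fixed positive number once $u$ is fixed) does not destroy membership in $X$ or continuity. This is handled by the elementary comparison $\log(1+t/s) \le C_s(1 + \log(1+t))$ valid for all $t \ge 0$ with a constant $C_s$ depending only on $s > 0$, which gives $|\psi^{s}|_* \le C\,\|\psi\|_X$ and hence the scaling is a bounded operator on $X$. Because $s^+$ and $s^-$ are finite and strictly positive (indeed $C^1$ functions of $u$ by Lemma \ref{regularite}), the constants are finite, and the argument goes through without any further obstruction.
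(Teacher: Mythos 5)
Your proposal is correct and follows essentially the same route as the paper: the paper's proof consists precisely of your step (1) — the change-of-variables computation showing $\langle u^{s}, \psi^{s}\rangle_{L^2} = \langle u, \psi\rangle_{L^2} = 0$ so the map is well defined — together with the remark that linearity is clear and a citation of \cite[Lemma 3.6]{BaSo2} for the remaining standard details. Your steps (2)--(4), in particular the explicit inverse $\phi \mapsto \phi^{1/s^{+}}$ and the boundedness of the dilation on $X$ via $\log(1+t/s) \le \log(1+t) + |\log s|$, correctly supply exactly what the paper delegates to that reference.
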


\begin{proof}
We give a proof of the first statement and to shorten the notation we set $s = s^+_{u}$ and $ u^s =u^{s^+_{u}}$. For $\psi \in T_u S(c)$ we have
$$\int_{\R^2} u^s(x) \psi^s(x) \, dx = \int_{\R^2} s u(s x)  s \psi(s x) \, dx = \int_{\R^2} u(y) \psi(y) \, dy =0. $$
As a consequence, $\psi^s \in T_{u^s}S(c)$ and the map is well defined. Clearly it is linear and the rest of the proof is standard, see for example \cite[Lemma 3.6]{BaSo2}. 
\end{proof}

\begin{lem}\label{transform}
We have that $dI^+(u)[\psi]=dF(u^{s^+_u}) [\psi^{s^+_{u}}]$ and $dI^-(u)[\psi]=dF(u^{s^-_u}) [\varphi^{s^-_{u}}]$ for any $u\in S(c)$ and $\psi \in T_u S(c)$.
\end{lem}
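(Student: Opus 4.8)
The plan is to treat $I^+$ (the argument for $I^-$ being identical) as a composition and to apply the chain rule, exploiting the fact that $u^{s_u^+}$ lies on the constraint $\Lambda(c)$ to annihilate the term created by the $u$-dependence of the projection parameter $s_u^+$. First I would introduce the auxiliary map $H:(0,\infty)\times S(c)\to\R$, $H(t,v)=F(v^t)$, so that $I^+(u)=H(s_u^+,u)$. For each \emph{fixed} $t>0$ the dilation $v\mapsto v^t$, with $v^t(x)=t\,v(tx)$, is linear in $v$, and its differential in a direction $\psi$ is again $\psi^t$; hence, applying the chain rule to $F$ composed with this linear map, one gets $\partial_v H(t,v)[\psi]=dF(v^t)[\psi^t]$. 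On the other hand, by the very definition of the fiber map, $\partial_t H(t,v)=\frac{d}{dt}F(v^t)=g_v'(t)$.

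Next I would differentiate $I^+$ at $u$ in a direction $\psi\in T_uS(c)$. Since $u\mapsto s_u^+$ is of class $C^1$ by Lemma \ref{regularite} and $H$ is $C^1$, the chain rule yields
\begin{equation*}
dI^+(u)[\psi]=\partial_t H(s_u^+,u)\,ds_u^+(u)[\psi]+\partial_v H(s_u^+,u)[\psi].
\end{equation*}
The first term vanishes: by definition $u^{s_u^+}\in\Lambda^+(c)\subset\Lambda(c)$, so $g_u'(s_u^+)=0$, that is $\partial_t H(s_u^+,u)=0$, and the scalar factor $ds_u^+(u)[\psi]$ then drops out irrespective of its value. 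Substituting the expression for $\partial_v H$ obtained above gives
\begin{equation*}
dI^+(u)[\psi]=dF(u^{s_u^+})[\psi^{s_u^+}],
\end{equation*}
which is the assertion; I would also remark that $\psi^{s_u^+}\in T_{u^{s_u^+}}S(c)$ by Lemma \ref{isomorphism}, so the right-hand side is a genuine pairing of $dF$ against a tangent vector. The formula for $I^-$ follows verbatim with $s_u^-$ in place of $s_u^+$ (the $\varphi$ appearing in the statement being a typographical variant of $\psi$).

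I do not expect a genuine obstacle here: the only delicate point is the bookkeeping of the chain rule, together with the observation that the projection parameter enters the derivative solely through $g_u'(s_u^\pm)=0$, which is precisely the defining condition of $\Lambda^\pm(c)$. All the regularity needed for this — the $C^1$ dependence $u\mapsto s_u^\pm$, the $C^1$ character of $F$ on $X$, and the smoothness of the dilation action — has already been established in Lemmas \ref{sec:comp-cond} and \ref{regularite} and in the preceding discussion of the scalings (\ref{scalings}).
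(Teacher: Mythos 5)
Your proof is correct, but it takes a genuinely different route from the paper's. The paper follows the squeeze argument of \cite{BaSo1}: it writes the incremental quotient of $I^{+}$ along a $C^1$ curve $\gamma$ in $S(c)$ and traps $F(\gamma(t)^{s_t})-F(\gamma(0)^{s_0})$ between $F(\gamma(t)^{s_t})-F(\gamma(0)^{s_t})$ and $F(\gamma(t)^{s_0})-F(\gamma(0)^{s_0})$, using only that $s_t$ is a \emph{strict local minimum} of the fiber map together with the \emph{continuity} of $u\mapsto s_u^{+}$, and then passes to the limit via the mean value theorem. You instead exploit the full $C^1$ regularity of $u\mapsto s_u^{\pm}$ from Lemma \ref{regularite} and apply the chain rule to $H(t,v)=F(v^t)$, killing the $\partial_t H$ term by the criticality $g_u'(s_u^{\pm})=0$. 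Both are valid; yours is shorter given that Lemma \ref{regularite} is already available, and it treats $s_u^{+}$ and $s_u^{-}$ uniformly (the paper's inequalities would have to be reversed for the maximum). The one point you should make explicit is why $H$ is jointly $C^1$: the curve $t\mapsto v^t$ is \emph{not} differentiable as an $X$-valued map (that would require $x\cdot\nabla v\in L^2$), so the joint regularity must come from the explicit scaling identities (\ref{scalings}), which give $H(t,v)=\tfrac{t^2}{2}A(v)+\tfrac{\gamma}{4}V(v)-\tfrac{\gamma}{4}\|v\|_2^4\log t-\tfrac{a t^{p-2}}{p}C(v)$ with both partial derivatives existing and jointly continuous; your computation of $\partial_t H$ and $\partial_v H$ separately implicitly does this, but the sentence ``the smoothness of the dilation action'' should be replaced by this observation. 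Note also that your identity $\partial_v H(t,v)[\psi]=dF(v^t)[\psi^t]$ is consistent with the explicit formula precisely because $V'(v^t)[\psi^t]=V'(v)[\psi]-4\log t\,\|v\|_2^2\int_{\R^2}v\psi\,dx$, the extra term being exactly the $\gamma\log(s_u)\int u^2\,dx\int u\psi\,dy$ contribution that the paper cancels at the end of its computation using $\psi\in T_uS(c)$.
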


\begin{proof}
We give the proof for $I^+$, we set here $s_u = s_{u}^+$ and $\psi^{s_u} = \psi^{s_{u}^+}$. Our proof is inspired by \cite[Lemma 3.2]{BaSo1}. Let $\psi \in T_uS(c)$. Then $\psi = \gamma'(0)$ where $ \gamma : (- \varepsilon, \varepsilon ) \mapsto S(c)$ is a $C^1$-curve  with $\gamma(0) = u$. We consider the incremental quotient
\begin{equation}\label{limite}
\frac{I^+(\gamma(t)) - I^+(\gamma(0))}{t} = \frac{F(\gamma(t)^{s_t}) - F(\gamma(0)^{s_0})}{t}
\end{equation}
where $s_t := s_{\gamma(t)}$ (notice that $s_0 = s_u$). Recalling that $s_t$ is a strict local minimum of $s \mapsto F( u^s)$ and using that $u \mapsto s_u^+$ is continuous, see Lemma \ref{regularite}, we get for $|t|$ small
\begin{eqnarray}\label{CU}
			F(\gamma(t)^{s_t}) - F(\gamma(0)^{s_0}) \geq  F(\gamma(t)^{s_t}) - F(\gamma(0)^{s_t})	= \frac{s_t^2}{2}\Big[A(\gamma(t)) - A(\gamma(0)\Big]
			 \nonumber \\ 
			 + \frac{\gamma}{4}\Big[V(\gamma(t)) - V(\gamma(0))\Big] - \frac{a s_t^{p-2}}{p}\Big[C(\gamma(t))- C(\gamma(0))\Big]
			 \nonumber \\ =
			s_t^2 \int_{\R^2}\nabla \gamma(\tau_1 t) \cdot \nabla \gamma^{'}(\tau_1 t) t \, dx   
			+  \gamma \int_{\R^2}\int_{\R^2} log |x-y| (\gamma(\tau_2 t))^2 (x) \gamma(\tau_2 t) (y) \gamma^{'}(\tau_2 t) (y) \, dx dy.
			\nonumber \\ -
			a s_t^{p-2} \int_{\R^2}|\gamma(\tau_3 t)|^{p-2} \gamma(\tau_3 t) \gamma^{'}(\tau_3 t) t \, dx \nonumber
			\end{eqnarray}
for some $\tau_1, \tau_2, \tau_3 \in (0,1)$. Analogously 
\begin{eqnarray}\label{CL}
			F(\gamma(t)^{s_t}) - F(\gamma(0)^{s_0}) \leq  F(\gamma(t)^{s_0}) - F(\gamma(0)^{s_0})	=
			s_0^2 \int_{\R^2}\nabla \gamma(\tau_4 t) \cdot \nabla \gamma^{'}(\tau_4 t) t \, dx  
			 \nonumber \\ 
			+  \gamma \int_{\R^2}\int_{\R^2} log |x-y| (\gamma(\tau_5 t))^2(x) \gamma(\tau_2 t) (y) \gamma^{'}(\tau_5 t) (y) \, dx dy.
			\nonumber \\ -
			a s_0^{p-2} \int_{\R^2}|\gamma(\tau_6 t)|^{p-2} \gamma(\tau_6 t) \gamma^{'}(\tau_6 t) t \, dx  \nonumber
			\end{eqnarray}
for some $\tau_4, \tau_5, \tau_6 \in (0,1)$.  Now from (\ref{limite}) we deduce that
\begin{eqnarray}\label{CL}
			\lim_{t \to 0}\frac{I^{+}(\gamma(t)) - I^{+}(\gamma(0))}{t} = 	s_u^2 \int_{\R^2}\nabla u \cdot \nabla \psi \, dx  +  \gamma \int_{\R^2}\int_{\R^2} \log |x-y| u^2(x) u(y) \psi(y) \, dx dy
			\nonumber \\
		 -
			a s_u^{p-2} \int_{\R^2}|u(x)|^{p-2} u(x) \psi(x) \, dx \nonumber \\
			= \int_{\R^2} \nabla(u^{s_u}) \cdot \nabla(\psi^{s_u}) \, dx +  \gamma \int_{\R^2}\int_{\R^2} \log |x-y| (u^{s_u})^2(x) u^{s_u}(y) \psi^{s_u}(y) \, dx dy \nonumber \\
			+ \gamma \log(s_u)  \int_{\R^2}\int_{\R^2} u^2(x) u(y) \psi(y) \, dx dy - a \int_{\R^2}|u^{s_u}(x)|^{p-2} u^{s_u}(x) \psi^{s_u}(x) \, dx. \nonumber \\
			= DF(u^{s_u})[\psi^{s_u}] + \gamma \log (s_u) \int_{\R^2} u^2(x) \, dx \int_{\R^2}   u(y) \psi(y) \, dy = DF(u^{s_u})[\psi^{s_u}] \nonumber
			\end{eqnarray}
			for every $u \in S(c)$ and $\psi \in T_u S(c)$. 
\end{proof}

In our next lemma $I^{\pm}$ denotes either $I^+$ or $I^-$ and accordingly $\Lambda^{\pm}(c)$ denotes $\Lambda^+(c)$ (or $\Lambda^-(c)$) and $s_u = s_{u}^+$ (or $s_u = s_{u}^-$).
			
\begin{lem}
\label{ps}
Let $\mathcal{G}$ be a homotopy stable family of compact subsets of $S(c)$ with closed boundary $B$ and let
$$
	\textcolor{blue}{e_{\mathcal{G}}^\pm}:= \inf_{A\in \mathcal{G}}\max_{u\in A}I^{\pm}(u).$$ 
Suppose that $B$ is contained in a connected component of $\Lambda^{\pm}(c)$ and that 
$\max\{\sup I^{\pm}(B),0\}<
\textcolor{blue}{e_{\mathcal{G}}^\pm}
<\infty$. Then there exists a Palais-Smale sequence $(u_n) \subset \Lambda^{\pm}(c)$ for $F$ restricted to $S(c)$ at level $\textcolor{blue}{e_{\mathcal{G}}^\pm}$.
\end{lem}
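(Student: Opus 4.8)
The plan is to apply Ghoussoub's minimax principle for $C^1$ functionals on complete $C^1$ Finsler manifolds \cite[Theorem 3.2]{Gh} to the functional $I^{\pm}$ on $S(c)$, and then to push the resulting Palais--Smale sequence onto the natural constraint $\Lambda^{\pm}(c)$ through the fiber projection $u \mapsto u^{s^{\pm}_u}$. Observe first that $S(c)$ is the zero set of $u \mapsto \|u\|_2^2 - c$, whose differential never vanishes on $S(c)$, so $S(c)$ is a complete $C^1$ submanifold of $X$ of codimension one; moreover $I^{\pm}$ is of class $C^1$ by Lemma \ref{regularite}. The minimax principle therefore produces a sequence $(v_n) \subset S(c)$ with $I^{\pm}(v_n) \to e_{\mathcal{G}}^{\pm}$ and $\|dI^{\pm}(v_n)\|_{(T_{v_n}S(c))^*} \to 0$, and in its refined form one may additionally require $\mathrm{dist}_X(v_n, A_n) \to 0$ along a sequence of sets $A_n \in \mathcal{G}$ with $\max_{A_n} I^{\pm} \to e_{\mathcal{G}}^{\pm}$.

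I would then set $u_n := v_n^{s^{\pm}_{v_n}}$. By Lemma \ref{intersection} we have $u_n \in \Lambda^{\pm}(c)$, while the very definition of $I^{\pm}$ gives $F(u_n) = I^{\pm}(v_n) \to e_{\mathcal{G}}^{\pm}$, so the energy level is the correct one. Since $(u_n) \subset \Lambda(c)$ and $F(u_n)$ is bounded, Lemma \ref{coercive} shows that $A(u_n)$ is bounded from above, and the identity $Q(u_n)=0$ forces $A(u_n) = a\tfrac{p-2}{p}C(u_n) + \tfrac{\gamma c^2}{4} \ge \tfrac{\gamma c^2}{4} > 0$; hence $A(u_n)$ remains in a fixed compact subinterval of $(0,\infty)$.

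The decisive point is to transfer the asymptotic criticality from $v_n$ to $u_n$, and this is where the main difficulty lies. For any $\phi \in T_{u_n}S(c)$, Lemma \ref{isomorphism} lets me write $\phi = \psi^{s^{\pm}_{v_n}}$ with $\psi = \phi^{1/s^{\pm}_{v_n}} \in T_{v_n}S(c)$, and Lemma \ref{transform} then yields $dF(u_n)[\phi] = dI^{\pm}(v_n)[\psi]$, whence $|dF(u_n)[\phi]| \le \|dI^{\pm}(v_n)\| \, \|\phi^{1/s^{\pm}_{v_n}}\|_X$. A direct scaling computation gives $\|\phi^{1/s}\|_X^2 \le C(s)\,\|\phi\|_X^2$ with a constant $C(s)$ that stays bounded as long as the dilation parameter $s$ ranges over a compact subset of $(0,\infty)$. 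The crux is therefore the uniform control of the scalings $s^{\pm}_{v_n}$, away from both $0$ and $+\infty$.

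To obtain this control I would exploit the structural identity $s^{\pm}_{u^t} = s^{\pm}_u / t$, which makes $I^{\pm}$ invariant under the dilations $u \mapsto u^t$. This invariance allows me to replace each minimizing set $A_n$ by its fiber projection $\{w^{s^{\pm}_w} : w \in A_n\} \subset \Lambda^{\pm}(c)$, which lies in $\mathcal{G}$ (being homotopic to $A_n$ relative to $B$, the homotopy fixing $B \subset \Lambda^{\pm}(c)$ where $s^{\pm}_w \equiv 1$) and carries the same maximal value of $I^{\pm}$. Combined with the coercivity bound on $A$, the localization $\mathrm{dist}_X(v_n,A_n)\to 0$, and the continuity of $w \mapsto s^{\pm}_w$ from Lemma \ref{regularite}, this keeps $(s^{\pm}_{v_n})$ within a compact subset of $(0,\infty)$. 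The estimate of the previous paragraph then gives $\|dF_{|S(c)}(u_n)\| \le C\,\|dI^{\pm}(v_n)\| \to 0$, so that $(u_n) \subset \Lambda^{\pm}(c)$ is the sought Palais--Smale sequence for $F$ restricted to $S(c)$ at level $e_{\mathcal{G}}^{\pm}$.
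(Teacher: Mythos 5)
Your proposal is correct and follows essentially the same route as the paper's proof: project the minimizing sets onto $\Lambda^{\pm}(c)$ via the dilation homotopy $u \mapsto u^{1-t+ts_u}$ (admissible since $s_u\equiv 1$ on $B\subset\Lambda^{\pm}(c)$), invoke Ghoussoub's minimax principle with the localization $\mathrm{dist}_X(v_n,A_n)\to 0$, push the resulting sequence onto $\Lambda^{\pm}(c)$ by the fiber map, bound $s^{\pm}_{v_n}$ through $ (s^{\pm}_{v_n})^2=A(u_n)/A(v_n)$ together with the coercivity of Lemma \ref{coercive}, and transfer the differential via Lemmas \ref{isomorphism} and \ref{transform}. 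Your lower bound $A(u_n)\ge \gamma c^2/4$ from $Q(u_n)=0$ is in fact a cleaner justification of the paper's claim $1/M\le A(u_n)$.
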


\begin{proof}
Take $(D_n) \subset \mathcal{G}$ such that $\max_{u\in D_n}I^{\pm}(u)<
\textcolor{blue}{e_{\mathcal{G}}^\pm}
+\frac1n$ and
$$\eta :[0,1]\times S(c)\rightarrow S(c),\ \eta (t,u)=u^{{1-t + ts_u}}.$$
Since $s_u =1$ for any $u\in \Lambda^{\pm}(c)$, and $B\subset \Lambda^{\pm}(c)$, we have $\eta (t,u)=u$ for $(t,u)\in (\{0 \}\times S(c))\cup ([0,1]\times B)$. Observe also that $\eta$ is continuous. Then, using the definition of $\mathcal{G}$, we have
$$A_n:= \eta (\{1 \}\times D_n )=\{u^{s_u}:\ u\in D_n \}\in \mathcal{G}.$$
Also notice that $A_n \subset \Lambda^{\pm}(c)$ for all $n \in \N$. Let $v\in A_n$, i.e. $v=u^{s_u}$ for some $u\in D_n$ and $I^{\pm}(u)=I^{\pm}(v)$. So $\max_{A_n}I^{\pm}=\max_{D_n}I^{\pm}$ and therefore $(A_n) \subset \Lambda^{\pm}(c)$ is another minimizing sequence of 
$
\textcolor{blue}{e_{\mathcal{G}}^\pm}$.
Using the  minimax principle \cite[Theorem 3.2]{Gh}, we obtain a Palais-Smale sequence $(\tilde{u}_n)$ for $I^{\pm}$ on $S(c)$ at level 
$\textcolor{blue}{e_{\mathcal{G}}^\pm}$ such that
 $dist_{X} (\tilde{u}_n , A_n)\rightarrow 0$ as $n\rightarrow \infty$. 
 Now  writing $s_n=s_{\tilde{u}_n}$ to shorten the notations, we set $u_n=\tilde{u}_n^{s_n}\in \Lambda^{\pm}(c)$.  We claim that there exists $C>0$ such that,
 \begin{equation}
\label{BaSoe1}
\frac1C \leq s_n^2\leq C
\end{equation}
for $n \in \N$ large enough. 
Indeed, notice first that
 \begin{equation}
\label{BaSoe2}
s_n^2=\dfrac{A(u_n)}{A(\tilde{u}_n)}.
\end{equation}
Since by definition we have $F(u_n)=I^{\pm}(\tilde{u}_n )\rightarrow 
\textcolor{blue}{e_{\mathcal{G}}^\pm}
$, we deduce from Lemma \ref{coercive}, that there exists $M>0$ such that 
 \begin{equation}
\label{BaSoe3}
\frac1M\leq A(u_n) \leq M.
\end{equation}
On the other hand, since $(A_n) \subset \Lambda^{\pm}(c)$, is a minimizing sequence for $
\textcolor{blue}{e_{\mathcal{G}}^\pm}$
and $F$ is $H$ coercive on $\Lambda^{\pm}(c)$, we deduce that $(A_n)$ is uniformly bounded in $H$ and thus from $dist_X(\tilde{u}_n , A_n)\rightarrow 0$ as $n\rightarrow \infty$, it implies that $\sup_n A(\tilde{u}_n) <\infty$. Also, since $A_n$ is compact for every $n \in \N$, there exists a $v_n \in A_n$ such that  $dist_X(\tilde{u}_n , A_n)=\|v_n-\tilde{u}_n \|_X$  and, using once again Lemma \ref{coercive}  we also deduce that, for a $\delta >0$,
$$A( \tilde{u}_n)\geq A(v_n)- A(\tilde{u}_n - v_n)\geq 
\dfrac{\delta}{2}.$$
This proves the claim.

Next, we show that $(u_n) \subset \Lambda^{\pm}(c)$ is a Palais-Smale sequence for $F$ on $S(c)$ at level 
$\textcolor{blue}{e_{\mathcal{G}}^\pm}$.
Denoting by $\|.\|_\ast$ the dual norm of $(T_{u_n}S(c))^\ast$, we have
$$
\|dF(u_n)\|_\ast=\sup_{\psi \in T_{u_n}S(c),\ \|\psi\| \leq 1}|dF(u_n)[\psi]|=\sup_{\psi \in T_{u_n}S(c),\ \|\psi\| \leq 1}|dF(u_n)[ (\psi^{-s_n})^{s_n}]|.
$$
From Lemma \ref{isomorphism} we know that  $T_{\tilde{u}_n} S(c)\rightarrow T_{u_n} S(c)$ defined by $\psi \rightarrow \psi^{s_n}$ is an isomorphism. 
Also, from Lemma \ref{transform} we have that $dI^{\pm}(\tilde{u}_n)[\psi^{-s_n}]=dF(u_n)[(\psi^{-s_n})^{s_n}]$. It follows that
\begin{equation}\label{liencleff}
\|dF(u_n)\|_\ast = \sup_{\psi \in T_{u_n}S(c),\ \|\psi\| \leq 1} |dI^{\pm}(\tilde{u}_n)[\psi^{-s_n}]|.
\end{equation}
At this point it is easily seen from \eqref{BaSoe1} that (increasing $C$ if necessary)
$ \|\psi^{-s_n}\| \leq C \|\psi\| \leq C$
and we deduce from \eqref{liencleff} that $(u_n) \subset \Lambda^{\pm}(c)$ is a Palais-Smale sequence for $F$ on $S(c)$ at level 
$\textcolor{blue}{e_{\mathcal{G}}^\pm}$.
\end{proof}

\begin{lem}\label{psbisl}
There exists a Palais-Smale sequence  $(u_n) \subset \Lambda^+(c)$ for $F$ restricted to $S(c)$ at the level $\gamma^{+}(c)$ and a Palais-Smale $(u_n) \subset \Lambda^-(c)$ for $F$ restricted to $S(c)$ at the level $\gamma^{-}(c)$.
\end{lem}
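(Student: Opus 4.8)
The plan is to obtain both Palais--Smale sequences as a direct application of Lemma \ref{ps}, using the simplest admissible homotopy stable family, namely the collection of all singletons. Concretely, I would take $B=\emptyset$ (which is admissible) and let $\mathcal{G}$ be the family of all one-point subsets $\{u\}$ with $u\in S(c)$. Each singleton is compact, it trivially contains $B=\emptyset$, and for any deformation $\eta$ as in the definition the image $\eta(\{1\}\times\{u\})=\{\eta(1,u)\}$ is again a singleton; hence $\mathcal{G}$ is a homotopy stable family of compact subsets of $S(c)$ with closed boundary $\emptyset$.

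With this choice the associated minimax value collapses to a plain infimum,
$$
e_{\mathcal{G}}^{\pm}=\inf_{A\in\mathcal{G}}\max_{u\in A}I^{\pm}(u)=\inf_{u\in S(c)}I^{\pm}(u),
$$
and the key bookkeeping point is to verify that this equals $\gamma^{\pm}(c)$. Indeed, by Lemma \ref{intersection} the projection $u\mapsto u^{s_u^{\pm}}$ sends $S(c)$ into $\Lambda^{\pm}(c)$, so $I^{\pm}(u)=F(u^{s_u^{\pm}})\geq\inf_{\Lambda^{\pm}(c)}F=\gamma^{\pm}(c)$ for every $u\in S(c)$; conversely every $v\in\Lambda^{\pm}(c)$ satisfies $s_v^{\pm}=1$ by the uniqueness in Lemma \ref{intersection}, whence $I^{\pm}(v)=F(v)$ and the reverse inequality follows. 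Therefore $e_{\mathcal{G}}^{\pm}=\gamma^{\pm}(c)$.

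It then remains to check the two numerical hypotheses of Lemma \ref{ps}. Since $B=\emptyset$ one has $\sup I^{\pm}(B)=-\infty$, so $\max\{\sup I^{\pm}(B),0\}=0$ and the requirement reduces to $0<\gamma^{\pm}(c)<\infty$. Strict positivity is immediate from Lemma \ref{coercive}, which bounds $F$ on $\Lambda(c)\supset\Lambda^{\pm}(c)$ below by a positive constant; finiteness holds because $\Lambda^{\pm}(c)\neq\emptyset$ (again Lemma \ref{intersection}), so that $\gamma^{\pm}(c)\leq F(v)<\infty$ for any fixed $v\in\Lambda^{\pm}(c)$. The condition that $B$ lie in a connected component of $\Lambda^{\pm}(c)$ is vacuous for $B=\emptyset$. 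Applying Lemma \ref{ps} then yields the desired Palais--Smale sequences $(u_n)\subset\Lambda^{\pm}(c)$ for $F$ restricted to $S(c)$ at the levels $\gamma^{\pm}(c)$.

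I do not expect a genuine obstacle at this stage: all the analytic work, in particular the construction of a deformation keeping the sets inside $\Lambda^{\pm}(c)$ and the transfer of the Palais--Smale condition from $I^{\pm}$ back to $F$ via the isomorphism of tangent spaces, has already been absorbed into Lemma \ref{ps}. The only points demanding care are the two facts recorded above, namely that the singleton minimax value is exactly the infimum $\gamma^{\pm}(c)$ and that this infimum is a finite positive number, both of which rest on Lemmas \ref{intersection} and \ref{coercive}.
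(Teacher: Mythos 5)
Your proposal is correct and follows exactly the paper's own route: both take $B=\emptyset$ and the homotopy stable family of all singletons of $S(c)$, identify the resulting minimax value $e_{\mathcal{G}}^{\pm}=\inf_{S(c)}I^{\pm}$ with $\gamma^{\pm}(c)$ via the projection of Lemma \ref{intersection}, and invoke Lemma \ref{ps}. Your write-up is in fact slightly more careful than the paper's, since you also verify the hypothesis $0<e_{\mathcal{G}}^{\pm}<\infty$ using Lemma \ref{coercive} and the non-emptiness of $\Lambda^{\pm}(c)$.
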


\begin{proof}
Let us assume that $(u_n) \subset \Lambda^+(c)$, the other case can be treated similarly. We use Lemma \ref{ps} taking the set  $\bar{\mathcal{G}}$ of all singletons belonging to $S(c)$ and $B=\varnothing$. It is clearly a homotopy stable family of compact subsets of $S(c)$ (without boundary).  Since
$$e_{\bar{\mathcal{G}}}^{+}:=\inf_{A\in \bar{\mathcal{G}}}\max_{u\in A}I^{+}(u)=\inf_{u\in S(c)}I^{+}(u)=\gamma^+(c)$$
 the lemma follows directly from Lemma \ref{ps}.
\end{proof}

Now we are ready to give
		
\begin{proof}[Proof of Theorem \ref{infreachxxxx}]
		We give the proof for $u^+$, the one for $u^-$ is almost identical. Let $(u_n) \subset \Lambda^+(c)$ be a Palais-Smale sequence for $F$ restricted to $S(c)$ at level $\gamma^+(c)$ whose existence is insured by Lemma \ref{psbisl}. By Lemma  \ref{coercive} we know that  $(u_n)$ is bounded in $H$. Also since the functional $F$ is translational invariant, in view of Lemma \ref{X-bound} it is not restrictive to assume that $(u_n) \subset \Lambda^+(c)$ is bounded in $X$. At this point we conclude using Lemma \ref{convergence}. 
		\end{proof}

\subsection{Proof of Theorem \ref{double infinity}.}\label{subsection34}
		
We are now interested in the existence of infinitely many solutions lying on $\Lambda^+(c)$ and $\Lambda^-(c)$.  For this we shall work in the subspace $X_{rad}$ of $X$ consisting of radially symmetric functions. We set $\Lambda_{rad}(c) = \Lambda(c) \cap X_{rad}$. \medskip

We denote by $\sigma : X \rightarrow X$ the transformation $\sigma (u)=-u$. The following definition is \cite[Definition 7.1]{Gh}.
\begin{definition}
Let $B$ be a closed subset of a metric space $Y$. We say that a class $\mathcal{G}$ of compact subsets of $Y$ is a $\sigma$-homotopy stable family with closed boundary $B$ if
\begin{enumerate}
\item every set in $\mathcal{G}$ is $\sigma$-invariant.
\item every set in $\mathcal{G}$ contains $B$;
\item for any $A\in \mathcal{G}$ and any $\eta \in C([0,1]\times Y, Y)$ satisfying, for all $t\in [0,1]$, $\eta (t,u)=\eta (t,\sigma (u))$,  $\eta (t,x)=x$ for all $(t,x)\in (\{0\}\times Y)\cup ([0,1]\times B)$, we have $\eta (\{1\} \times A)\in \mathcal{G}$.
\end{enumerate}
\end{definition}

\begin{lem}
\label{psbis2}
Let $\mathcal{F}$ be a $\sigma$-homotopy stable family of compact subsets of $\Lambda_{rad}^{\pm}(c)$ with a close boundary $B$. Let $c_{\mathcal{F}}:= \inf_{A\in \mathcal{F}}\max_{u\in A}F(u)$.  Suppose that $B$ is contained in a connected component of $\Lambda_{rad}^{\pm}(c)$ and that $\max \{\sup F(B),0\}<c_{\mathcal{F}}<\infty$. Then there exists a Palais-Smale sequence $(u_n) \subset \Lambda_{rad}^{\pm}(c)$ for $F$ restricted to $S(c)$ at level $c_{\mathcal{F}}$.
\end{lem}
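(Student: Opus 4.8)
The plan is to follow the proof of Lemma~\ref{ps} almost verbatim, replacing the minimax principle \cite[Theorem 3.2]{Gh} by its $\sigma$-equivariant counterpart \cite[Theorem 7.2]{Gh} and carrying out every construction inside the radial subspace $X_{rad}$. Working in $X_{rad}$ serves two purposes: it supplies the $\mathbb{Z}_2$-symmetry $\sigma(u)=-u$ demanded by the equivariant theory and, by Lemmas~\ref{lem_V1(u_n)_bornee} and \ref{X-bound}, it restores the compactness lost to translations (for radial sequences the translation vectors $(x_n)$ stay bounded), so that $S(c)_{rad}$ and $\Lambda_{rad}^\pm(c)$ are complete manifolds on which the deformation arguments can live. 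I first record that all auxiliary objects respect the symmetry: since $A,C,V$ are even and rotation invariant, the projection parameter satisfies $s_{-u}=s_u$ and $s_{u\circ R}=s_u$ for $R\in O(2)$, so the map $u\mapsto u^{s_u}$ is $\sigma$-equivariant and sends $S(c)_{rad}$ into $\Lambda_{rad}^\pm(c)$; likewise $I^\pm$ is even, and the isomorphism of Lemma~\ref{isomorphism} and the identity of Lemma~\ref{transform} are unaffected by the restriction to $X_{rad}$.

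The first step is to realize $c_{\mathcal F}$ as a minimax level of $I^\pm$ on $S(c)_{rad}$. Since every $A\in\mathcal F$ lies in $\Lambda_{rad}^\pm(c)$, where $s_u\equiv 1$ and hence $I^\pm=F$, we have $c_{\mathcal F}=\inf_{A\in\mathcal F}\max_{u\in A}I^\pm(u)$. Given a $\sigma$-symmetric deformation $\eta$ of $S(c)_{rad}$ fixing $B$ and equal to the identity at $t=0$, its composition with the projection, $(t,u)\mapsto (\eta(t,u))^{s_{\eta(t,u)}}$, restricts on $\Lambda_{rad}^\pm(c)$ to an admissible $\sigma$-homotopy (it fixes $B$ and equals the identity at $t=0$, because $s_u=1$ there) and, as $I^\pm$ is invariant under $u\mapsto u^{s_u}$, it leaves $\max_A I^\pm$ unchanged while keeping the deformed set inside $\mathcal F$. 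This projection-invariance is precisely what lets the equivariant principle \cite[Theorem 7.2]{Gh} run for $I^\pm$ on $S(c)_{rad}$, producing a $\sigma$-symmetric Palais--Smale sequence $(\tilde u_n)\subset S(c)_{rad}$ for $I^\pm$ at level $c_{\mathcal F}$ with $\mathrm{dist}_X(\tilde u_n,A_n)\to 0$ for a minimizing family $(A_n)\subset\mathcal F$.

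The second step transfers this to $F$, exactly as in Lemma~\ref{ps}. Setting $s_n=s_{\tilde u_n}$ and $u_n=\tilde u_n^{s_n}\in\Lambda_{rad}^\pm(c)$, the coercivity of $F$ on $\Lambda^\pm(c)$ (Lemma~\ref{coercive}), together with $\mathrm{dist}_X(\tilde u_n,A_n)\to 0$ and $F(u_n)=I^\pm(\tilde u_n)\to c_{\mathcal F}$, yields the two-sided bound $\tfrac1C\le s_n^2\le C$ through the identity $s_n^2=A(u_n)/A(\tilde u_n)$, reproducing \eqref{BaSoe1}--\eqref{BaSoe3}. Lemmas~\ref{isomorphism} and \ref{transform} then give $\|dF(u_n)\|_\ast=\sup_{\|\psi\|\le 1}|dI^\pm(\tilde u_n)[\psi^{-s_n}]|$ with $\|\psi^{-s_n}\|\le C\|\psi\|$, whence $\|dF|_{S(c)_{rad}}(u_n)\|_\ast\to 0$. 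Finally, since $u_n$ is radial and $F$ and $S(c)$ are $O(2)$-invariant, the principle of symmetric criticality gives $\|dF|_{S(c)}(u_n)\|_\ast=\|dF|_{S(c)_{rad}}(u_n)\|_\ast\to 0$, so $(u_n)\subset\Lambda_{rad}^\pm(c)$ is the desired Palais--Smale sequence for $F$ restricted to $S(c)$ at level $c_{\mathcal F}$.

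The step I expect to be the main obstacle is the first one: justifying that the equivariant minimax principle genuinely applies when the prescribed family $\mathcal F$ lives on the submanifold $\Lambda_{rad}^\pm(c)$ rather than on all of $S(c)_{rad}$. The resolution is the projection-invariance of $I^\pm$ described above, but one must check that the projected deformation is continuous and $\sigma$-equivariant, which rests on the $C^1$-regularity and evenness of $u\mapsto s_u$ from Lemma~\ref{regularite}, and that the hypothesis $\max\{\sup F(B),0\}<c_{\mathcal F}$ together with $B$ lying in a single connected component of $\Lambda_{rad}^\pm(c)$ keeps the boundary undisturbed. A secondary technical point, needed both for completeness of the relevant superlevel sets and for the bounds on $s_n$, is the radial compactness furnished by Lemmas~\ref{lem_V1(u_n)_bornee} and \ref{X-bound}, guaranteeing that the minimizing and Palais--Smale sequences stay bounded in $X_{rad}$ without recourse to translations.
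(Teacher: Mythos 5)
Your proposal is correct and follows essentially the same route as the paper, which itself only sketches the argument by invoking an equivariant version of Lemma~\ref{ps} (via \cite[Theorem 7.2]{Gh}) carried out in $X_{rad}$ and deferring to \cite{BaSo2} for the passage from the minimax principle to the stated form. In fact your write-up supplies more detail than the paper does, in particular on the projection-invariance that lets the equivariant principle apply to the family living on $\Lambda^{\pm}_{rad}(c)$ and on the final reduction from $dF|_{S(c)_{rad}}$ to $dF|_{S(c)}$.
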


\begin{proof}
We are only sketchy here and refer to \cite{BaSo2} for the proofs of closely related results. The proof of Lemma \ref{psbis2} first relies  on an  equivariant version of Lemma \ref{ps}, whose proof is almost identical to the one of Lemma \ref{ps}. Then the lemma follows just as \cite[Theorem 3.2]{BaSo2} follows from \cite[Proposition 3.9]{BaSo2}.
\end{proof}

\begin{remark}
Lemma \ref{psbis2} establishes that, if the assumptions of the equivariant minimax principle \cite[Theorem 7.2]{Gh} are satisfied by the functional $F$ constrained to $\Lambda^{\pm}(c)$, then we can find a ``free" Palais-Smale sequence for $F$ on $S(c)$ made of elements of $\Lambda^{\pm}(c)$.
\end{remark}

Now let $\mathcal{H}:= \Lambda^{+}(c) \cap X_{rad}$ (or $\mathcal{H}:= \Lambda^{-}(c) \cap X_{rad}$) and recall, in this notation, the definition of the genus of a set due to M.A. Krasnosel'skii.

\begin{definition} \label{genus}
Let $\mathcal{A}$ be a family of sets $A \subset \mathcal{H}$ such that $A$ is closed and symmetric ($u \in A$ if and only if $-u \in A$). For every $A \in \mathcal{A}$, the genus of $A$ is defined by
$$
\gamma(A):= \min \{n \in \N: \exists \ \varphi : A \rightarrow \R^n\backslash \{0\}, \varphi \,\, \text{is continuous and odd}\}.
$$
When there is no $\varphi$ as described above, we set $\gamma(A)= \infty.$
\end{definition}
Let  $\mathcal{A}_\mathcal{H}$ be the family of compact and symmetric sets $A \subset \mathcal{H}$. For any $k \in \N^+$, define
$$
\Gamma_{k}:=\{A \in \mathcal{A}_\mathcal{H}: \gamma(A) \geq k\}
$$
and
$$
\beta_k:=\inf_{A \in \Gamma_{k}}\sup_{u \in A}F (u).
$$

\begin{lem} \label{nonempty} 
Let $c <c_0$. For  any $k \in \N^+$, $\Gamma_k^- \neq \emptyset$ and $\Gamma_k^+ \neq \emptyset$.
\end{lem}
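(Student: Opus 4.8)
The plan is to prove, for each fixed $k \in \N^+$, the slightly stronger statement that there exists a single compact symmetric set $A \subset \mathcal{H}$ with $\gamma(A) \geq k$; this gives $A \in \Gamma_k^\pm$ and hence $\Gamma_k^\pm \neq \emptyset$ at once. I would build such an $A$ by projecting a finite-dimensional $L^2$-sphere onto the natural constraint using the fiber projection $u \mapsto u^{s_u^\pm}$ furnished by Lemmas \ref{intersection} and \ref{regularite}.

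First I would produce a set of large genus inside $S(c) \cap X_{rad}$. Choose $k$ linearly independent radial functions $\varphi_1, \dots, \varphi_k \in C_c^\infty(\R^2) \subset X_{rad}$ and let $W_k := \mathrm{span}\{\varphi_1, \dots, \varphi_k\}$, a $k$-dimensional subspace of $X_{rad}$ on which $\|\cdot\|_2$ is a genuine norm. Then
$$E_k := \{ u \in W_k : \|u\|_2^2 = c \}$$
is the $L^2$-sphere of radius $\sqrt{c}$ in $W_k$; it is compact, symmetric, contained in $S(c) \cap X_{rad}$, and odd-homeomorphic to the unit sphere $S^{k-1} \subset \R^k$. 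Since $\gamma(S^{k-1}) = k$ and the genus is invariant under odd homeomorphisms, $\gamma(E_k) = k$.

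Next I would transport $E_k$ onto $\Lambda^\pm(c)$. By Lemma \ref{intersection} the map $\pi^\pm(u) := u^{s_u^\pm}$ sends $S(c)$ into $\Lambda^\pm(c)$, and by Lemma \ref{regularite} it is continuous (as $u \mapsto s_u^\pm$ is $C^1$ and the dilation is continuous). Because the dilation preserves radial symmetry, $\pi^\pm$ maps $S(c) \cap X_{rad}$ into $\Lambda_{rad}^\pm(c) = \mathcal{H}$. The key point is that $\pi^\pm$ is odd: the quantities $A, V, C$ are even in $u$, so $g_{-u} = g_u$ and thus $s_{-u}^\pm = s_u^\pm$; combined with the identity $(-u)^s = -(u^s)$ this yields $\pi^\pm(-u) = -\pi^\pm(u)$. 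Hence $A := \pi^\pm(E_k)$ is a compact, symmetric subset of $\mathcal{H}$ with $0 \notin A$ (since $A \subset S(c)$ and $c > 0$).

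Finally, the lower bound on the genus follows from its monotonicity under odd continuous maps: since $\pi^\pm|_{E_k} : E_k \to A$ is a continuous odd surjection, $k = \gamma(E_k) \leq \gamma(A)$, so $A \in \Gamma_k^\pm$. I do not expect a genuine obstacle here; the argument is essentially topological. The only points needing verification---oddness and continuity of $\pi^\pm$, and that it keeps radial functions radial while preserving $S(c)$---are all immediate from the scaling identities \eqref{scalings} and from Lemmas \ref{intersection} and \ref{regularite}. The mildest care is needed to ensure $E_k$ genuinely sits in $X_{rad}$, which is why I take the $\varphi_i$ compactly supported, so that the weighted integral $|u|_*^2$ is trivially finite.
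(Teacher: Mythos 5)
Your proposal is correct and follows essentially the same route as the paper: both take a $k$-dimensional subspace of $X_{rad}$, observe that its $L^2$-sphere in $S(c)$ has genus $k$, and push it onto $\Lambda^{\pm}(c)$ via the continuous odd projection $u \mapsto u^{s_u^{\pm}}$ from Lemmas \ref{intersection} and \ref{regularite}, concluding by monotonicity of the genus under odd continuous maps. Your version merely spells out a few details (oddness of the projection, choice of compactly supported generators) that the paper leaves implicit.
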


\begin{proof}
We give the proof for $\Gamma_k^+$.
Let $V \subset X_{rad}$ be such that $\dim V=k$. We set $SV(c):=V \cap S(c)$. By the basic property of the genus, see \cite[Theorem 10.5]{AmMa}, we have that $\gamma(SV(c))= \dim V =k$. In view of Lemma \ref{intersection}, for any $u\in SV(c)$ there exists unique $s_u^+ >0$ such that $ u^{s_u^+} \in \Lambda^+(c)$. It is easy to check that the mapping $\varphi: SV(c)\rightarrow \Lambda(c)$ defined by $\varphi (u)= u^{s_u^+}$ is continuous and odd. Then \cite[Lemma 10.4]{AmMa} leads to $\gamma( \varphi(SV(c))) \geq \gamma(SV(c))=k$ and this shows that $\Gamma_k \neq \emptyset$. 
\end{proof}

\begin{proof}[Proof of Theorem \ref{double infinity}]
We give the proof for $\Lambda^+(c)$, the case of $\Lambda^-(c)$ is identical.
Consider the minimax level $\beta_k$. From Lemma \ref{nonempty} we know that each of the classes $\Gamma_k$ is non empty and thus to each of them we can apply Lemma \ref{psbis2} to obtain the existence of  Palais-Smale sequences $(u_n^k) \subset \Lambda^+_{rad}(c)$ for $F$ restricted to $S(c)$ at the levels $\beta_k$. Since $u_n^k$ is radial we know from Lemmas \ref{lem_V1(u_n)_bornee} and Lemma \ref{X-bound} that $(u_n^k) \subset X_{rad}$ is bounded in $X$. At this point we conclude using Lemma \ref{convergence} that $(u_n^k)_n$ converges to a $u^k$ which is a critical point of $F$ on $S(c)$.  Now to show that if two (or more) values of $\beta_k$ coincide, than $F$ has infinitely many critical points at level $c_k$, one can either proceed in the usual way, or adapt \cite[Lemma 6.4]{BaSo2} to the present setting.
\end{proof}

\section{The case $\gamma<0$} \label{gammanegatif}

In this section, for convenience, we change $\gamma$ into $-\gamma$ and thus we write
$$F(u)=\frac{1}{2}A(u)-\frac{\gamma}{4}V(u)-\frac{a}{p}\; C(u) $$
with $\gamma >0$. With this change note that the function  $g_u : (0, \infty) \to X$ becomes
$$g_u(t)= F(u^t)= \frac{t^2}{2} A(u) - \frac{\gamma}{4} V(u) + \frac{\gamma c^2}{4} \log t 
-\frac{a }{p}	\ t^{p-2} C(u).$$ 
Obviously we still have that $g_u$ is $C^2$ on $(0,  \infty)$ and
\begin{equation}\label{deriprima}
g'_u(t)=  \frac{1}{t} \bigl( t^2 A(u) + \frac{\gamma c^2}{4} 
-\frac{a (p-2)}{p} t^{p-2} C(u) \bigr).
\end{equation}

Firstly, we notice that if $a \leq 0$ and $p>2$,  for each $u \in S(c)$ the  {\sl fiber map}
$g_u(t):= F(u^t)$ is strictly increasing and so we can immediately derive Theorem\ref{nonexistence}.

Also note that
$$
\Lambda(c)= \{u \in S(c) \  | \ Q(u) =0\} = \{u \in S(c) \ | \ g_u'(1)=0\}.
$$
For future reference  observe that defining 
\begin{equation}\label{tstar}
{t^*_u} = \Bigl[\frac{a(p-2)^2 C(u)}{2p A(u)} \Bigr]^{1/(4-p)}
\end{equation}  we have  that
\begin{equation}\label{characterization-star}
2 A(u^{{t^*_u}}) = \frac{a(p-2)^2}{p} C(u^{{t^*_u}}).
\end{equation}
Furthermore notice that 
\begin{equation}\label{primaplus3}
2 A(u^t) < \frac{a(p-2)^2}{p} C(u^t), \quad \forall \, 0 < t < {{t^*_u}}
\end{equation}	
and 
\begin{equation}\label{secondaplus3}
2 A(u^t) > \frac{a(p-2)^2}{p} C(u^t), \quad \forall \, t > {{t^*_u}}.
\end{equation}

In what follows we always assume that $a >0$ and $p<4$. The following quantities will play a crucial role in this section,
$$
K_1=  \frac{1}{2^{\frac{4-p}{2}}} \frac{1}{K_{GN}} \frac{p}{2^{3-p} (p-2)^{\frac{p}{2}}(4-p)^{\frac{4-p}{2}}}.
$$
and 
$$K_2=2^\frac{4-p}{2}\; K_1=\frac{1}{K_{GN}}\; \frac{p }{2^{3-p}(p-2)^{\frac{p}{2}}(4-p)^\frac{4-p}{2}}.$$

\subsection{Properties of $\Lambda(c)$}\label{propertiesLambda}

\begin{lem}\label{fuori}
Assume that $\gamma <0$  and $p <4$. Then 
$$\Lambda(c) \neq \emptyset \quad \mbox{if and only if} \quad a \geq K_1 \gamma^{\frac{4-p}{2}} c^{3-p}.$$
\end{lem}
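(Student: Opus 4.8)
The plan is to reduce the nonemptiness of $\Lambda(c)$ to the existence of a critical point of a single fiber map, and then to optimize over the profile of $u$ by means of the sharp Gagliardo--Nirenberg inequality. Since the dilations $u\mapsto u^t$ preserve $S(c)$ and $u^t\in\Lambda(c)$ if and only if $g_u'(t)=0$, the set $\Lambda(c)$ is nonempty exactly when there is some $u\in S(c)$ whose fiber map $g_u$ possesses a critical point. Introducing $\varphi_u(t):=Q(u^t)=t^2A(u)+\frac{\gamma c^2}{4}-\frac{a(p-2)}{p}t^{p-2}C(u)$, so that $g_u'(t)=\varphi_u(t)/t$ by (\ref{deriprima}), I would note that $\varphi_u(t)\to\frac{\gamma c^2}{4}>0$ as $t\to0^+$ and $\varphi_u(t)\to+\infty$ as $t\to+\infty$ (here $2<p<4$), while the sign information (\ref{primaplus3})--(\ref{secondaplus3}) shows that $\varphi_u$ decreases on $(0,t^*_u)$ and increases on $(t^*_u,+\infty)$, with $t^*_u$ given by (\ref{tstar}). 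Hence $t^*_u$ is the global minimum of $\varphi_u$, and $g_u$ has a critical point if and only if $\varphi_u(t^*_u)\le0$.

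Next I would evaluate this minimal value. Eliminating the $C$--term through the defining identity (\ref{characterization-star}) gives, after a short computation,
$$\varphi_u(t^*_u)=\frac{p-4}{p-2}\,(t^*_u)^2A(u)+\frac{\gamma c^2}{4}.$$
As $\frac{p-4}{p-2}<0$, the existence of some $u$ with $\varphi_u(t^*_u)\le0$ is equivalent to
$$\sup_{u\in S(c)}(t^*_u)^2A(u)\ \ge\ \frac{(p-2)}{4-p}\cdot\frac{\gamma c^2}{4}.$$
From (\ref{tstar}) one has $(t^*_u)^2A(u)=\big[\frac{a(p-2)^2}{2p}\big]^{2/(4-p)}\big[C(u)^2/A(u)^{p-2}\big]^{1/(4-p)}$, and the ratio $C(u)^2/A(u)^{p-2}$ is invariant under $u\mapsto u^t$; I would therefore maximize it over $S(c)$ by the sharp inequality (\ref{minoration_norme_u_L^p}), obtaining $\sup_{S(c)}C(u)^2/A(u)^{p-2}=K_{GN}^2c^2$, the value being attained by the Gagliardo--Nirenberg extremal (which decays exponentially, hence lies in $X$) after rescaling its $L^2$--norm to $c$.

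Substituting this supremum and solving the resulting inequality explicitly for $a$ converts the condition into the announced threshold $a\ge K_1\,\gamma^{(4-p)/2}\,c^{3-p}$. The two implications are then clean: if $a<K_1\,\gamma^{(4-p)/2}\,c^{3-p}$, then $\inf_{u\in S(c)}\varphi_u(t^*_u)=\frac{p-4}{p-2}\sup_u(t^*_u)^2A(u)+\frac{\gamma c^2}{4}>0$, so $Q>0$ along every dilation orbit and $\Lambda(c)=\emptyset$; conversely, taking the rescaled extremal $w$ one gets $\varphi_w(t^*_w)\le0$, whence $w^{t^*_w}\in\Lambda(c)$. I expect the main obstacle to be the optimization step: one must verify that the supremum defining the best constant is genuinely attained inside $X\cap S(c)$ (so that equality at the threshold value of $a$ still yields a point of $\Lambda(c)$, not merely an infimizing sequence), and one must track the fractional powers of $a$, $c$ and $\gamma$ carefully to recover the precise constant $K_1$.
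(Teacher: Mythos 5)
Your strategy is exactly the paper's: minimize $Q$ along each dilation orbit, observe that the minimal value $\varphi_u(t_u^*)$ depends on $u$ only through the scale--invariant quotient $C(u)^2/A(u)^{p-2}$, optimize that quotient over $S(c)$ by the sharp Gagliardo--Nirenberg inequality, and use attainment by the (exponentially decaying, hence $X$-valued) extremal to get an actual element of $\Lambda(c)$ at and above the threshold. Your intermediate identity $\varphi_u(t_u^*)=\frac{p-4}{p-2}(t_u^*)^2A(u)+\frac{\gamma c^2}{4}$ is correct and is a cleaner packaging than the paper's explicit constant $\tilde K_1$; the two implications are then organized exactly as in the paper.

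The one step you do not carry out is the final algebra, and it does not produce the constant you announce. From your own formulas, some $u\in S(c)$ satisfies $\varphi_u(t_u^*)\le 0$ if and only if
\begin{equation*}
\Bigl[\frac{a(p-2)^2}{2p}\,K_{GN}\,c\Bigr]^{2/(4-p)}\ \ge\ \frac{p-2}{4-p}\,\frac{\gamma c^2}{4},
\end{equation*}
and raising to the power $(4-p)/2$ and solving for $a$ gives $a\ge \frac{1}{K_{GN}}\frac{p}{2^{3-p}(p-2)^{p/2}(4-p)^{(4-p)/2}}\,\gamma^{(4-p)/2}c^{3-p}$, i.e. $a\ge K_2\,\gamma^{(4-p)/2}c^{3-p}$, not $K_1\,\gamma^{(4-p)/2}c^{3-p}$. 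A concrete check at $p=3$: there $\varphi_u(t_u^*)=\frac{\gamma c^2}{4}-\frac{a^2}{36}\frac{C(u)^2}{A(u)}$, whose infimum over $S(c)$ is nonpositive iff $a\ge 3\sqrt{\gamma}/K_{GN}=K_2\sqrt{\gamma}$, whereas $K_1\sqrt{\gamma}=3\sqrt{\gamma}/(\sqrt{2}\,K_{GN})$. The same discrepancy sits in the paper's own proof: its displayed $\tilde K_1$ carries $2^{(p-2)/(4-p)}$ in the denominator where the computation yields $2^{2/(4-p)}$ (a dropped factor $\frac12$ from $\frac{4-p}{2}$). Note that the $K_2$ threshold is also what consistency with Lemma \ref{ilu} demands: at the nonemptiness threshold the unique point of $\Lambda(c)$ is degenerate, hence has $A(u)=k_0$ with equality in Gagliardo--Nirenberg, which forces $a= K_2\,\gamma^{(4-p)/2}c^{3-p}$. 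So either redo the last line and accept that the correct threshold is $K_2$, or locate a legitimate source for the missing factor $2^{(4-p)/2}$; as written, your concluding sentence asserts a constant that your own (correct) computation contradicts.
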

\begin{proof}
	Let $u \in S(c)$ and $t >0$. Defining 
	$$
	\phi_u(t)= Q(u^t) = A(u) t^2 - a \frac{(p-2)}{p} C(u) t^{p-2} + \frac{\gamma c^2}{4}
	$$
	we have $g'_u(t)= \frac{1}{t} \phi_u(t)$.
	Thus the function $\phi_u$ achieves its minimum at $t^*_u$ given in   \eqref{tstar}
	and 
	\begin{equation}\label{nocross}
	\phi_u({{t_u}^*})= 
	\gamma \frac{c^2}{4} - \tilde K_1 \Big[
	\frac{C(u)^2}{A(u)^{p-2}} \Big]^{1/(4-p)} a^{2/(4-p)}
	\end{equation}
	where
	$\displaystyle \tilde K_1= \frac{(p-2)^{p/(4-p)}(4-p)}{p^{2/(4-p)} 2^{(p-2)/(4-p)}}$. 
	By Gagliardo-Nirenberg inequality (\ref{GN}),  we have
	$$
	\frac{C(u)^2}{A(u)^{p-2}} \leq K^2_{G N} c^2
	$$
	which leads to 
	$$\inf_{u \in S(c)} Q(u)  \geq  \gamma \frac{c^2}{4} - \tilde K_1 [K_{G N} \ a \ c]^{2/(4-p)}.
	$$
	Hence if $a < K_1 \gamma^{\frac{4-p}{2}} c^{3-p}$, then
	$\inf_{S(c)} Q(u) >0$, and so $\Lambda(c)= \emptyset$. Now, since the best constant in the Gagliardo-Nirenberg inequality is reached, say by $\bar{u} \in S(c)$, we also have that
	$$\inf_{u \in S(c)} Q(u) = Q(\bar{u}) =   \gamma \frac{c^2}{4} - \tilde K_1 [K_{G N} \ a \ c]^{2/(4-p)}.$$
	Thus if  $a > K_1 \gamma^{\frac{4-p}{2}} c^{3-p}$, then
	$$\inf_{u \in S(c)} Q(u) <0 $$ and since $\lim_{t \to  \infty} \phi_u(t) = + \infty$, we deduce by continuity that $\Lambda(c)\neq \emptyset$.
	If 
	$a= K_1 \gamma^{\frac{4-p}{2}} c^{3-p}$, then $Q(\bar{u})=0$ and so $\Lambda(c) \neq \emptyset$. 
\end{proof}

\begin{lem}\label{unboundedbelow}
Assume that $\gamma <0$ and $p <4$. Then if
\begin{equation}\label{star}
 a >  K_1\; \gamma^\frac{4-p}{2}\; c^{3-p},
 \end{equation}
we have that $\underset{\Lambda(c)}{\inf} \;F = - \infty.$
\end{lem}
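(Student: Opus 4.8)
The plan is to use the constraint $Q(u)=0$ to rewrite $F$ on $\Lambda(c)$ so that the divergence to $-\infty$ becomes transparent, and then to build an explicit sequence in $\Lambda(c)$ realizing it. If $u\in\Lambda(c)$, then $Q(u)=0$ gives $\frac{a}{p}C(u)=\frac{1}{p-2}\bigl(A(u)+\frac{\gamma c^2}{4}\bigr)$, whence
$$
F(u)=\frac{p-4}{2(p-2)}A(u)-\frac{\gamma}{4}V_1(u)+\frac{\gamma}{4}V_2(u)-\frac{\gamma c^2}{4(p-2)}.
$$
Since $p<4$ the coefficient of $A(u)$ is negative; by \eqref{minoration_V2} the term $\frac{\gamma}{4}V_2(u)$ is dominated by $\frac{\gamma}{4}K\sqrt{A(u)}\,c^{3/2}$; and, arguing as in the proof of Lemma \ref{fuori}, $Q(u)=0$ together with \eqref{minoration_norme_u_L^p} forces $A(u)^{(4-p)/2}<a\frac{p-2}{p}K_{GN}c$, so $A$, and hence $V_2$, stay bounded on $\Lambda(c)$. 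Consequently, as $-\gamma/4<0$, it suffices to exhibit a sequence $(v_n)\subset\Lambda(c)$ with $A(v_n)$ bounded and $V_1(v_n)\to+\infty$.

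To produce such a sequence I would spread the mass over a growing region while keeping the profile projectable onto $\Lambda(c)$. Fix $\eps\in(0,c)$, choose a compactly supported $\psi\in H$ with $\|\psi\|_2^2=c-\eps$ that is nearly optimal in the Gagliardo--Nirenberg inequality \eqref{GN}, and a compactly supported $\chi\in H$ with $\|\chi\|_2^2=\eps$ and $A(\chi),C(\chi)$ small. For a unit vector $e$ and $R>0$ large set
$$
w_R:=\psi(\cdot-Re)+\chi(\cdot+Re)\in S(c).
$$
For $R$ large the two supports are disjoint, so $A(w_R)=A(\psi)+A(\chi)$ and $C(w_R)=C(\psi)+C(\chi)$ are independent of $R$, whereas the cross term forces $V_1(w_R)\ge 2\iint\log(1+|x-y|)\,\psi^2(x-Re)\,\chi^2(y+Re)\,dx\,dy\to+\infty$ as $R\to\infty$, since the relevant distances are of order $2R$.

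Next I would project $w_R$ onto $\Lambda(c)$. As $A(w_R),C(w_R)$ are independent of $R$, the quantity $\phi_{w_R}(t^*_{w_R})$ from \eqref{nocross} is a fixed number; choosing $\eps$ small and $\psi$ nearly optimal makes the scale invariant ratio $C(w_R)^2/A(w_R)^{p-2}$ as close to $K_{GN}^2c^2$ as desired, and since $a>K_1\gamma^{\frac{4-p}{2}}c^{3-p}$ strictly, this gives $\phi_{w_R}(t^*_{w_R})<0$ exactly as in Lemma \ref{fuori}. Hence there is some $t_*>0$, independent of $R$, with $w_R^{t_*}\in\Lambda(c)$; here $A(w_R^{t_*})=t_*^2A(w_R)$ stays bounded, while a change of variables gives $V_1(w_R^{t_*})=\iint\log(1+|x-y|/t_*)\,w_R^2(x)w_R^2(y)\,dx\,dy\to+\infty$ by the same cross-term estimate. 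Setting $v_R:=w_R^{t_*}$ and inserting it in the formula for $F$ above yields $F(v_R)\to-\infty$, proving the lemma.

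The main obstacle is the tension between the two requirements on the sequence: spreading the mass is exactly what drives $V_1\to+\infty$, but it also lowers the Gagliardo--Nirenberg ratio and could push the function outside the range where a projection onto $\Lambda(c)$ exists. This is why the auxiliary bump $\chi$ is taken of small mass $\eps$ and placed far away: it keeps the overall profile close to an extremal of \eqref{GN} (so that projectability, which by Lemma \ref{fuori} holds at the extremal precisely when $a>K_1\gamma^{\frac{4-p}{2}}c^{3-p}$, is preserved) while still forcing the logarithmic kernel to blow up. The only routine point to verify carefully is that $t_*$ is bounded away from $0$ and $\infty$, so that the fixed dilation does not destroy the divergence of $V_1$; this follows since $A(w_R),C(w_R)$ are fixed and positive.
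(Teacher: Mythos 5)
Your proposal is correct, but it is organized differently from the paper's proof, and the comparison is worth making. The paper first reduces the problem by noting that from any $u\in S(c)$ with $Q(u)\le 0$ one can dilate up to the first zero of $t\mapsto Q(u^t)$, landing on $\Lambda(c)$ while decreasing $F$ (because $\frac{d}{dt}F(u^t)=Q(u^t)/t\le 0$ along the way); it therefore only needs a sequence in $S(c)$ with $Q\le 0$ and $F\to-\infty$, and it never has to control the projection quantitatively. The divergence of $V_1$ is then produced by the \emph{self}-interaction of a single auxiliary bump $v_n=\frac1n v\bigl(\frac1n(\cdot-nR)\bigr)$, which spreads out so that $V_1(v_n)\ge V(v_n)=V(v)+\|v\|_2^4\log n\to\infty$ while $A(v_n),C(v_n)\to 0$; the main bump, of mass slightly below $c$, is chosen with $Q<0$, and this is where the paper's case distinction $p>3$ versus $p\le 3$ enters. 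You instead work on $\Lambda(c)$ itself, using the reduced functional and the boundedness of $A$ on $\Lambda(c)$ (the content of Lemma \ref{coercive2}) to reduce everything to $V_1\to+\infty$ along a sequence in $\Lambda(c)$, and you obtain the blow-up from the \emph{cross} term of two bumps drifting apart. The cost is the projectability issue, which you correctly identify and resolve: you must keep the scale-invariant ratio $C^2/A^{p-2}$ near its Gagliardo--Nirenberg supremum $K_{GN}^2c^2$ so that the minimum value in \eqref{nocross} remains negative, and this is precisely where the strictness of $a>K_1\gamma^{\frac{4-p}{2}}c^{3-p}$ is consumed; since $A(w_R)$ and $C(w_R)$ are independent of $R$ once the supports are disjoint, the dilation parameter $t_*$ is indeed fixed and the cross term survives the rescaling. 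Your route avoids the paper's case distinction at $p=3$; the paper's route avoids any near-optimality requirement on the main bump. Both arguments are complete.
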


\begin{proof}
Our proof borrows ideas from \cite{BeJeLu}.
First observe that if $u \in S(c)$ is such that $Q(u) \leq 0$ then since $Q(u^t) \to + \infty$ as $t \to  \infty$ there exists a $t\geq 1$ such that $Q(u^t) =0$ and $F(u^t) \leq F(u)$. So we only need to prove that there exists a sequence $(u_n) \subset S(c)$ with $Q(u_n) \leq 0$ and $F(u_n) \to - \infty$ as $n \to \infty$. 
Let $c>0$ satisfies (\ref{star}) and assume first that $p >3$. Then there exists a $c_1 >0$ such that $c>c_1$ and $\{ u \in S(d): Q(u) <0\} \neq \emptyset$ for $d >c_1$. We set $\eta = c -c_1 >0$ and take $u \in C_0^{\infty}(\R^2),$ $u \geq 0$ with $||u||_2^2 = c - \frac{\eta}{2}$ and $Q(u) <0$. We also choose a $v \in C_0^{\infty}(\R^2),$ $v \geq 0$ with $||v||_2^2 =  \frac{\eta}{2}$. We now consider the  sequence 
$$u_n(x) = u(x) + \frac{1}{n}v\Big(\frac{1}{n}(x-nR)\Big) := u(x) + v_n(x)$$
where $R>0$ is choosen sufficiently large so that the supports of $u$ and $v_n$ are disjoints. Clearly
\begin{eqnarray*}
Q(u_n)  & = & A(u+v_n) - a \frac{p-2}{p}C(u+v_n) + \frac{\gamma c^2}{4} \\
& = &  A(u) - a \frac{p-2}{p}C(u) + \frac{\gamma c^2}{4} + A(v_n) - a \frac{p-2}{p}C(v_n)\\
& \rightarrow & A(u) - a \frac{p-2}{p}C(u) + \frac{\gamma c^2}{4} < 0,
 \end{eqnarray*}
since $A(v_n) \to 0$ and $C(v_n) \to 0$ as $n \to \infty$. Also we easily observe that, because the functions $u$ and $v_n$ are non negative, that $V_1(u_n) \geq V_1(v_n)$ and that $V_1(v_n) \to + \infty$ as $n \to  \infty$. We then deduce that $F(u_n) \to - \infty$ proving the lemma in the case $p >3$. 

Now if we assume that $p \leq 3$ there exists a $c_1 >0$ ($c_1 = + \infty$ if $p=3$) such that $c < c_1$ and $\{ u \in S(d): Q(u) <0\} \neq \emptyset$ for $d <c_1$ We then modify the previous proof by taking $u \in C_0^{\infty}(\R^2)$, $u \geq 0$ with $||u||_2^2 = \frac{c}{2}$ and $Q(u) <0$ and consider instead the sequence
$$u_n(x) = u(x) + \frac{1}{n}u\big(\frac{1}{n}(x- nR)\big).$$
By similar arguments we obtain  $Q(u_n) \to Q(u) <0$ and $ F(u_n) \to - \infty$ as $n \to \infty$.
\end{proof}

\begin{lem}\label{coercive2} 
Assume that $\gamma <0$, $a >0$ and $p <4$. Then,
\begin{enumerate}
\item $F$ restricted to $\Lambda(c) $ is bounded from above.
\item For any $m_1 \in \R$, there exists a $m_2 \in \R$ such that, for all $u \in \Lambda(c)$, $A(u) \leq m_2$  and $V_1(u) \leq m_2$ if $F(u) \geq m_1$.
\end{enumerate}
\end{lem}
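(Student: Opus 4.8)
The plan is to use the constraint $Q(u)=0$ to eliminate $C(u)$ from the expression of $F$, and then to exploit the sign structure produced by the hypotheses $\gamma>0$ (after the sign change of this section) and $2<p<4$. Recall that here $u\in\Lambda(c)$ means $Q(u)=A(u)-a\frac{p-2}{p}C(u)+\frac{\gamma c^2}{4}=0$, which I would rewrite as
$$\frac{a}{p}C(u)=\frac{1}{p-2}\Big[A(u)+\frac{\gamma c^2}{4}\Big].$$
Substituting this identity, together with $V(u)=V_1(u)-V_2(u)$, into $F(u)=\frac{1}{2}A(u)-\frac{\gamma}{4}V(u)-\frac{a}{p}C(u)$ gives the key formula
$$F(u)=\frac{p-4}{2(p-2)}A(u)-\frac{\gamma}{4}V_1(u)+\frac{\gamma}{4}V_2(u)-\frac{\gamma c^2}{4(p-2)}, \qquad u\in\Lambda(c).$$
Since $2<p<4$ the coefficient $\frac{p-4}{2(p-2)}$ is strictly negative, and since $\gamma>0$ and $V_1\ge 0$ the term $-\frac{\gamma}{4}V_1(u)$ is nonpositive; both facts work in our favor.

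For statement (1), I would simply discard the nonpositive $V_1$-term and control the only remaining bad term $\frac{\gamma}{4}V_2(u)$ by the estimate (\ref{minoration_V2}), namely $|V_2(u)|\le K\sqrt{A(u)}\,c^{3/2}$, obtaining
$$F(u)\le \frac{p-4}{2(p-2)}A(u)+\frac{\gamma K c^{3/2}}{4}\sqrt{A(u)}-\frac{\gamma c^2}{4(p-2)}.$$
Viewed as a function of $\sqrt{A(u)}$, the right-hand side is a downward parabola, hence bounded above by a constant depending only on $p,a,\gamma,c$. This yields the boundedness from above.

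For statement (2), I would instead keep the $V_1$-term and move the negative contributions to the left: from $m_1\le F(u)$ and the same $V_2$-estimate one gets
$$\frac{4-p}{2(p-2)}A(u)+\frac{\gamma}{4}V_1(u)\le \frac{\gamma K c^{3/2}}{4}\sqrt{A(u)}-\frac{\gamma c^2}{4(p-2)}-m_1,$$
where, crucially, both coefficients on the left are strictly positive (again because $p<4$). Dropping the nonnegative $V_1$-term leaves a scalar inequality of the form $\alpha A(u)\le \beta\sqrt{A(u)}+\delta$ with $\alpha>0$, which forces $A(u)\le m_2$ for some $m_2=m_2(m_1)$. Reinserting this bound into the displayed inequality then bounds $V_1(u)$ by a constant as well, completing the proof.

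I expect no serious obstacle here: the content of the lemma is entirely encoded in the key formula for $F$ on $\Lambda(c)$, and the estimates are of the same elementary ``negative leading term versus $\sqrt{A}$ growth'' type already used in Lemma \ref{coercive}. The only point requiring care is the bookkeeping of the coefficient signs, which is precisely where the restriction $p<4$ enters; one must verify that $\frac{p-4}{2(p-2)}<0$ (for (1)) and that the surviving left-hand coefficients $\frac{4-p}{2(p-2)}$ and $\frac{\gamma}{4}$ are positive (for (2)), so that the $V_2$-term, growing only like $\sqrt{A(u)}$, cannot overwhelm them.
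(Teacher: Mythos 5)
Your proposal is correct and follows essentially the same route as the paper: both use the constraint $Q(u)=0$ to substitute $\frac{a}{p}C(u)=\frac{1}{p-2}\bigl[A(u)+\frac{\gamma c^2}{4}\bigr]$, drop the nonpositive $-\frac{\gamma}{4}V_1(u)$ term, control $V_2$ via (\ref{minoration_V2}), and exploit that the coefficient $\frac{p-4}{2(p-2)}$ of $A(u)$ is negative for $2<p<4$. Your write-up is in fact slightly more explicit than the paper's (which ends with ``both points follow''), since you spell out how retaining the $V_1$-term and the bound on $A(u)$ yields the bound on $V_1(u)$ in point (2).
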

\begin{proof}
Let $u \in \Lambda(c)$.
From 
\begin{equation}\label{3}
	C(u) = \frac{p}{a(p-2)}\Big[ A(u) + \frac{\gamma c^2}{4}\Big]
	\end{equation}
and  $\gamma V_1(u) \geq 0$, we deduce
	\begin{equation*}
	F(u) \leq \frac{1}{2} A(u) + \frac{\gamma c^{3/2}}{4} A(u)^{\frac{1}{2}} - \frac{1}{p-2}\Big[A(u) +\frac{\gamma c^2}{4}\Big] \leq - \frac{(4-p)}{2(p-2)}  A(u) + \frac{\gamma c^{3/2}}{4} A(u)^{\frac{1}{2}}.
	\end{equation*}
	Since $2<p<4$, both points follow.
\end{proof}

The following three lemmas give information on the geometric structure of $\Lambda(c)$.
\begin{lem}\label{ilu}
\label{lem_Q=0_A(u)=k_0}
Assume that $\gamma <0$, $a >0$ and $p <4$. 
If $Q(u)\leq 0$ (resp. $Q(u) <0)$  and $A(u)=k_0$ then
$$ a\geq K_2\; \gamma^\frac{4-p}{2}\; c^{3-p} \, \Big(\mbox{resp. }  a > K_2\; \gamma^\frac{4-p}{2}\; c^{3-p}\Big)$$
\end{lem}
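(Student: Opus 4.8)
The plan is to mimic the proof of Lemma \ref{natural-bound}: the constraint $Q(u)\le 0$ forces $C(u)$ to be \emph{large} (a lower bound), while the Gagliardo--Nirenberg inequality (\ref{minoration_norme_u_L^p}) forces $C(u)$ to be \emph{small} (an upper bound); evaluating both at $A(u)=k_0$ and comparing eliminates $C(u)$ and yields the announced lower bound on $a$.

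First I would record that, after the sign change of this section,
$$Q(u)=A(u)+\frac{\gamma c^2}{4}-\frac{a(p-2)}{p}C(u),\qquad k_0=\frac{(p-2)}{(4-p)}\,\frac{\gamma c^2}{4},$$
so that $Q(u)\le 0$ together with $A(u)=k_0$ gives, since $k_0+\frac{\gamma c^2}{4}=\frac{\gamma c^2}{2(4-p)}$,
$$\frac{a(p-2)}{p}C(u)\ \ge\ k_0+\frac{\gamma c^2}{4}=\frac{\gamma c^2}{2(4-p)},\qquad\text{i.e.}\qquad C(u)\ \ge\ \frac{p\,\gamma c^2}{2a(p-2)(4-p)}.$$
On the other hand, inserting $A(u)=k_0$ into (\ref{minoration_norme_u_L^p}) yields
$$C(u)\ \le\ K_{GN}\,k_0^{\frac{p-2}{2}}\,c=K_{GN}\Big[\frac{(p-2)}{(4-p)}\frac{\gamma c^2}{4}\Big]^{\frac{p-2}{2}}c.$$

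Combining the two displayed bounds removes $C(u)$ and, after solving for $a$, should give
$$a\ \ge\ \frac{2p\,k_0^{\frac{4-p}{2}}}{(p-2)^2K_{GN}\,c}.$$
Substituting $k_0=\frac{(p-2)\gamma c^2}{4(4-p)}$ and simplifying (using $4^{\frac{4-p}{2}}=2^{4-p}$ and tracking the powers of $(p-2)$ via $k_0^{\frac{4-p}{2}}/(p-2)^2=(p-2)^{-p/2}\cdots$) I expect to recover exactly
$$K_2=\frac{1}{K_{GN}}\,\frac{p}{2^{3-p}(p-2)^{\frac{p}{2}}(4-p)^{\frac{4-p}{2}}},$$
that is $a\ge K_2\,\gamma^{\frac{4-p}{2}}c^{3-p}$. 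For the parenthetical claim, if instead $Q(u)<0$ the first chain is strict, so $C(u)$ is strictly larger than its lower bound; comparing with the Gagliardo--Nirenberg upper bound then gives the strict inequality $a>K_2\,\gamma^{\frac{4-p}{2}}c^{3-p}$.

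I do not anticipate a genuine conceptual obstacle here: the argument is simply the $\gamma<0$, $p<4$ analogue of Lemma \ref{natural-bound}. The only delicate point is the bookkeeping of exponents needed to recognize the precise constant $K_2$ (in particular the powers of $2$, $(p-2)$ and $(4-p)$ carried by $k_0^{\frac{4-p}{2}}$). Throughout one must keep in mind that $p-2>0$ and $4-p>0$, so that $k_0$, the quantity $k_0+\frac{\gamma c^2}{4}$, and the resulting bounds are all positive and the inequalities can be divided through without reversing direction.
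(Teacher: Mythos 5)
Your proposal is correct and follows essentially the same route as the paper: the paper likewise combines $Q(u)\le 0$ (rewritten as $A(u)\le a\frac{p-2}{p}C(u)-\frac{\gamma c^2}{4}$ in the sign convention of Section 4) with the Gagliardo--Nirenberg bound $C(u)\le K_{GN}k_0^{\frac{p-2}{2}}c$ at $A(u)=k_0$, and solves for $a$ to obtain $a\ge K_2\gamma^{\frac{4-p}{2}}c^{3-p}$. Your intermediate expression $a\ge 2p\,k_0^{\frac{4-p}{2}}/\bigl((p-2)^2K_{GN}c\bigr)$ does simplify to exactly $K_2\gamma^{\frac{4-p}{2}}c^{3-p}$, and the strictness in the parenthetical case propagates as you say.
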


\begin{proof}
Since $Q(u)\leq 0$, we have
$$ A(u)\leq a \frac{p-2}{p}C(u)-\frac{\gamma c^2}{4}.$$
Then,  by Gagliardo-Nirenberg and since $A(u)=k_0$, we get 
\begin{align*}
\frac{p-2}{4-p}\;\frac{\gamma c^2}{4}& \leq
 a \frac{p-2}{p} K_{GN}\left[\frac{p-2}{4-p\;}\frac{\gamma c^2}{4}\right]^{\frac{p-2}{2}}\;c-\frac{\gamma c^2}{4}\\
\left[\frac{p-2}{4-p}+1 \right]\;\frac{\gamma c^2}{4}= \frac{1}{4-p}\frac{\gamma c^2}{2}& \leq
 a  K_{GN}\frac{(p-2)^\frac{p}{2}}{p (4-p)^\frac{p-2}{2}2^{p-2}}\gamma^\frac{p-2}{2}c^{p-1}\\
 \gamma^\frac{4-p}{2}c^{3-p}\; \frac{1}{K_{GN}}\frac{p}{(4-p)^\frac{4-p}{2} 2^{3-p}(p-2)^\frac{p}{2}}
 &\leq a\\
  K_2\; \gamma^\frac{4-p}{2}\; c^{3-p}&\leq a,
 \end{align*}
 whence the result.
\end{proof}

\begin{lem}
\label{lem_A(u)=k_0_2}
Assume that $\gamma <0$, $a >0$ and $p <4$. 
Let $u\in S(c)$ such that $Q(u)=0$ and $\left.\frac{d}{dt}\right|_{t=1}\;Q(u^t)=0$.
Then $ A(u)=k_0.$
\end{lem}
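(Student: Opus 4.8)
The plan is to mirror exactly the proof of Lemma \ref{nondegenerate}, its $\gamma>0$ counterpart, the only adjustments being the signs forced by the convention $\gamma\to-\gamma$ (with $\gamma>0$) adopted throughout this section. This is a purely computational identity, so I do not anticipate any genuine obstacle; the only thing requiring care is bookkeeping of signs.

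First I would differentiate $Q(u^t)$ using the scaling relations $A(u^t)=t^2A(u)$ and $C(u^t)=t^{p-2}C(u)$. Starting from the expression recorded before the constants $K_1,K_2$, namely $Q(u^t)=t^2A(u)-a\frac{p-2}{p}t^{p-2}C(u)+\frac{\gamma c^2}{4}$, a direct differentiation and evaluation at $t=1$ gives
$$\left.\frac{d}{dt}\right|_{t=1}Q(u^t)=2A(u)-a\frac{(p-2)^2}{p}C(u).$$
From the hypothesis $\left.\frac{d}{dt}\right|_{t=1}Q(u^t)=0$ I would then isolate
$$a\frac{p-2}{p}C(u)=\frac{2}{p-2}A(u).$$

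Next I would substitute this into the constraint $Q(u)=A(u)-a\frac{p-2}{p}C(u)+\frac{\gamma c^2}{4}=0$, paying attention that here the term $\frac{\gamma c^2}{4}$ enters with a plus sign (recall the definition of $Q$ in (\ref{defQ}) together with the replacement $\gamma\to-\gamma$). This yields
$$\left(1-\frac{2}{p-2}\right)A(u)=-\frac{\gamma c^2}{4},$$
that is, $\frac{p-4}{p-2}A(u)=-\frac{\gamma c^2}{4}$. Since $p<4$ both sides are negative, and solving produces
$$A(u)=\frac{p-2}{4-p}\,\frac{\gamma c^2}{4}=k_0,$$
which is precisely the value of $k_0$ used in this section. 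The sole subtlety, rather than any real difficulty, is that here $k_0=\frac{p-2}{4-p}\frac{\gamma c^2}{4}$ (with the positive $\gamma c^2/4$ contribution to $Q$), as opposed to $\frac{p-2}{p-4}\frac{\gamma c^2}{4}$ in the $p>4$ case of Lemma \ref{nondegenerate}; once the signs are tracked correctly the identity $A(u)=k_0$ is immediate and holds for every $c>0$.
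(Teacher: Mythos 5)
Your proposal is correct and follows essentially the same route as the paper: differentiate $Q(u^t)$ at $t=1$, extract $a\frac{p-2}{p}C(u)=\frac{2}{p-2}A(u)$, and substitute into $Q(u)=0$ to get $A(u)=\frac{p-2}{4-p}\frac{\gamma c^2}{4}=k_0$. Your remark about the sign bookkeeping and the value of $k_0$ in the $\gamma<0$ section matches what the paper implicitly uses (see, e.g., Lemma \ref{vuoto}).
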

\begin{proof}
First, a simple computation shows that 
$$\left.\frac{d}{dt}\right|_{t=1}\;Q(u^t)=2A(u)-a\frac{(p-2)^2}{p}\; C(u).$$
So by hypothesis, $$a\frac{p-2}{p}\;C(u)=\frac{2}{p-2}\; A(u).$$
But we also know that $Q(u)=A(u)-a\;\frac{p-2}{p}\; C(u)+\gamma\;\frac{c^ 2}{4}=0$, so
$$\left(\frac{2}{p-2}-1\right)\; A(u)=\gamma\;\frac{c^ 2}{4},$$ i.e.
$A(u)=k_0.$
\end{proof}

\subsection{Proof of Theorem \ref{lem_T(c)sousvariete234} }\label{Onesolution}

In order to prove Theorem \ref{lem_T(c)sousvariete234} we first establish the following lemma.

\begin{lem}
\label{lem_maximum_local}
Assume that $\gamma <0$ and $p<4$. Then if
 $K_1\; \gamma^\frac{4-p}{2}\; c^{3-p} \leq a \leq K_2\; \gamma^\frac{4-p}{2}\; c^{3-p}, $
 $M : = \underset{\Lambda(c)}{\sup} \;F $ is achieved on $\Lambda(c)$. 
\end{lem}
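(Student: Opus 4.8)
The plan is to take a maximizing sequence for $M$, push it into the part of $\Lambda(c)$ on which $F$ is large, extract a weak limit after translation, and show that this limit already attains $M$; the upper bound on $a$ will be used exactly once, to keep the weak limit inside $\Lambda(c)$. The algebraic backbone is that on $\Lambda(c)$ the identity $Q(u)=0$, i.e. $\frac{a}{p}C(u)=\frac{1}{p-2}\bigl(A(u)+\frac{\gamma c^2}{4}\bigr)$, lets me eliminate $C$ and rewrite
\[
F(u)=J(u):=\frac{p-4}{2(p-2)}A(u)-\frac{\gamma}{4}V_1(u)+\frac{\gamma}{4}V_2(u)-\frac{\gamma c^2}{4(p-2)}\qquad(u\in\Lambda(c)),
\]
while for an arbitrary $u\in S(c)$ a direct computation gives $J(u)=F(u)-\frac{1}{p-2}Q(u)$. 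Since $p<4$, both coefficients $\frac{p-4}{2(p-2)}$ and $-\frac{\gamma}{4}$ are negative, and $A,V_1$ are weakly lower semicontinuous on $X$ while $V_2$ is weakly continuous (Lemma \ref{sec:comp-cond}); hence $J$ is weakly sequentially \emph{upper} semicontinuous. This is the feature that turns a maximization into a semicontinuity argument.

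By Lemma \ref{coercive2}(1), $M<\infty$. From the fiber geometry, any $u\in\Lambda(c)$ with $A(u)>k_0$ (equivalently $g_u''(1)>0$) is dominated along its own fiber by the point $u^{s}$, $s<1$, at which $g_u$ has its strict local maximum and which satisfies $A(u^{s})<k_0$; so I may choose a maximizing sequence with $A(u_n)\le k_0$. Lemma \ref{coercive2}(2) then bounds $A(u_n)$ and $V_1(u_n)$, so $(u_n)$ is bounded in $H$, and by Lemmas \ref{lem_V1(u_n)_bornee} and \ref{lem_Cingo_2.1}, after a translation (all of $F,\Lambda(c),Q,A,C,V_i$ are translation invariant) and along a subsequence, $\tilde u_n\rightharpoonup u$ in $X$, $\tilde u_n\to u$ in every $L^s$, $s\in[2,\infty)$, with $u\in S(c)$. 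Thus $C(\tilde u_n)\to C(u)$ and $V_2(\tilde u_n)\to V_2(u)$; using $Q(\tilde u_n)=0$ one finds $A(\tilde u_n)\to A^\ast:=\frac{a(p-2)}{p}C(u)-\frac{\gamma c^2}{4}$, whence $A(u)\le A^\ast\le k_0$ and $Q(u)=A(u)-A^\ast\le0$. Since $F=J$ on the constraint and $J$ is weakly u.s.c., I get $M=\lim F(\tilde u_n)=\lim J(\tilde u_n)\le J(u)=F(u)-\frac{1}{p-2}Q(u)$.

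The decisive step is to prove $Q(u)=0$. Suppose instead $Q(u)<0$; then $A(u)<A^\ast\le k_0$, and $\phi_u(t):=Q(u^t)$ (positive near $0$, tending to $+\infty$, with a single interior minimum) has exactly two zeros $s_u^-<1<s_u^+$ and is negative between them. Let $t_0=\sqrt{k_0/A(u)}>1$ be the unique dilation with $A(u^{t_0})=k_0$. Along the fiber, $J(u^t)=\frac{p-4}{2(p-2)}t^2A(u)+\frac{\gamma c^2}{4}\log t+\text{const}$ is strictly concave with its maximum precisely at $t_0$. If $t_0\ge s_u^+$, then $J(u^{\cdot})$ is strictly increasing on $(0,s_u^+]$, forcing $F(u^{s_u^-})=J(u^{s_u^-})<J(u^{s_u^+})=F(u^{s_u^+})$, which contradicts $F(u^{s_u^-})>F(u^{s_u^+})$ (true because $g_u$ is strictly decreasing on $(s_u^-,s_u^+)$). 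Hence $t_0\in(s_u^-,s_u^+)$, so $Q(u^{t_0})<0$ while $A(u^{t_0})=k_0$; by the strict form of Lemma \ref{ilu} this forces $a>K_2\,\gamma^{\frac{4-p}{2}}c^{3-p}$, against the hypothesis $a\le K_2\,\gamma^{\frac{4-p}{2}}c^{3-p}$. Therefore $Q(u)=0$, i.e. $u\in\Lambda(c)$, so $J(u)=F(u)$ and the inequality of the previous paragraph reads $M\le F(u)\le M$; thus $F(u)=M$ and the supremum is attained.

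This proves the lemma, and strong convergence follows as well: in the equality $M=J(u)$ the nonnegative lower-semicontinuity defects of $A$ and $V_1$ must both vanish, giving $A(\tilde u_n)\to A(u)$ and $V_1(\tilde u_n)\to V_1(u)$, so that $\tilde u_n\to u$ in $H$ and then, by Lemma \ref{V_1}, $\tilde u_n\to u$ in $X$. The only genuine obstacle is the step $Q(u)=0$: off $\Lambda(c)$ the favorable representation collapses — the truly lower-semicontinuous quadratic term reappears with the sign that obstructs maximization — so one cannot simply pass to the limit in $F$. The hypothesis $a\le K_2\,\gamma^{\frac{4-p}{2}}c^{3-p}$ enters exactly here, through Lemma \ref{ilu}, to forbid the limit from escaping into $\{Q<0\}$, which is also why $K_2$ (and not $K_1$) is the relevant threshold.
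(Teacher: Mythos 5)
Your proof is correct and follows essentially the same route as the paper's: rewrite $F$ on $\Lambda(c)$ as the weakly upper semicontinuous functional $J$ (the paper's $G$ in \eqref{restriction}), pass to a translated weak limit $u$ with $Q(u)\le 0$, and exclude $Q(u)<0$ by a fiber argument along $t\mapsto J(u^t)$ combined with Lemma \ref{ilu}. The only differences are cosmetic (normalizing the maximizing sequence to $A(u_n)\le k_0$, and deriving the contradiction from the ordering $F(u^{s_u^-})>F(u^{s_u^+})$ rather than from the definition of $M$), so no further comment is needed.
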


\begin{proof}
From Lemma \ref{coercive2} we already know that $M<\infty$ and that for any maximizing sequence $(u_n)  \subset \Lambda(c)$, $(A(u_n))$ is bounded. Clearly also $(V_1(u_n))$ is bounded.
Hence, using previous arguments  we may assume  that, up to a subsequence and translations, $(u_n)$ is bounded in $X$ and that, for some $u \in S(c)$,
$u_n\overset{X}{\rightharpoonup} u\in S(c)$ 
and 
$u_n\overset{H}{\rightharpoonup} u.$ In addition we have that  $V_2(u_n)\rightarrow V_2(u)$. At this point it is convenient to introduce the functional 
\begin{equation}\label{restriction}
G(u)=- \frac{4-p}{2(p-2)}\; A(u)-\frac{\gamma}{4}V(u)-\gamma\; \frac{c^2}{4(p-2)}.
\end{equation}
which coincide with $F$  on the set $\Lambda(c)$. Since $A$ (resp. $V_1$) is lowersemicontinuous for the weak convergence on $H$ (resp. X)
and since $G$ is invariant by translation, we deduce that $M\leq G(u).$

Similarly, $C$ is continuous for the weak convergence in $X$ and $A$ is lower semicontinuous for the weak convergence in $H$, hence $Q(u)\leq 0$. To conclude we just need to show that $Q(u)=0$. Observe that by a direct calculation, for any $t>0$,
$$G(u^t)=- \frac{4-p}{2(p-2)}\; A(u)\; t^2+\frac{\gamma c^2}{4}\log t-\frac{\gamma}{4}V(u),$$
and thus
\begin{equation}\label{derivee}
\frac{d}{dt}G(u^t)=- \frac{4-p}{(p-2)}\; A(u)\; t +\frac{\gamma c^2}{4}\frac{1}{t}.
\end{equation}
Note that $ v:= \left. \frac{d}{dt}\right|_{t=1}G(u^t)= 0$ is equivalent to $A(u)=k_0$. Since $Q(u) \leq 0$ we thus know from Lemma \ref{lem_Q=0_A(u)=k_0}  that  
$v \neq 0$.
We shall now prove that neither $v < 0$ nor $v  > 0$ is possible if $Q(u)<0$ and it will end the proof. 
	
First assume that $v < 0$. Since $Q(u^t)
\to  \displaystyle{\frac{\gamma c^2}{4} >0}$ as $t \to 0$, assuming that $Q(u) <0$, there exists a $t_0 <1$ such that $Q(u^{t_0})=0$ and $Q(u^t) \leq 0$ if $t \in [t_0,1]$. Thus, again by Lemma \ref{lem_Q=0_A(u)=k_0}, we deduce that $\frac{d}{dt}G(u^t) <0$ for $t \in [t_0,1]$ and consequently  $G(u^{t_0}) = F(u^{t_0}) > M$ in contradiction with the definition of $M$. Assume now that $ v > 0$. Since $Q(u^t)
\to  + \infty$ as $t \to \infty$ there exists a $t_1 >1$ such that $Q(u^{t_1})=0$ and $Q(u^t) \leq 0$ if $t \in [1,t_1]$. Thus, again by Lemma \ref{lem_Q=0_A(u)=k_0}, we deduce that $\frac{d}{dt}G(u^t) >0$ for $t \in [1,t_1]$ which lead to the same contradiction.
\end{proof}

At this point we are ready to give

\begin{thm}\label{lem_T(c)sousvariete234}
	Assume $\gamma <0$, $p <4$ and 		
	$$K_1\; \gamma^\frac{4-p}{2}\; c^{3-p}\leq a< K_2\; \gamma^\frac{4-p}{2}\; c^{3-p}.$$
	Then $\sup_{\Lambda(c)} F(u) < \infty$  and it is achieved by a critical point of $F$ restricted to $S(c)$.
\end{thm}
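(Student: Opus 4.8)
The boundedness $\sup_{\Lambda(c)} F < \infty$ is precisely the first assertion of Lemma \ref{coercive2}, and since the present hypothesis $a < K_2\, \gamma^{\frac{4-p}{2}} c^{3-p}$ is stronger than the hypothesis $a \leq K_2\, \gamma^{\frac{4-p}{2}} c^{3-p}$ of Lemma \ref{lem_maximum_local}, that lemma already provides a $u \in \Lambda(c)$ realizing $M := \sup_{\Lambda(c)} F$. Thus the only point left to prove is that such a maximizer $u$ is a critical point of $F$ restricted to $S(c)$. The plan is to show that, under the present \emph{strict} upper bound on $a$, the set $\Lambda(c)$ is a $C^1$ submanifold of $S(c)$ of codimension one, to record the maximality of $u$ as a Lagrange multiplier relation, and to prove that the multiplier attached to the constraint $Q=0$ vanishes.

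First I would establish the submanifold property in exact analogy with Lemma \ref{submanifold1}, replacing its input Lemma \ref{natural-bound} by the counterpart Lemma \ref{ilu}. Writing $G(u) = \|u\|_2^2 - c$, the set $\Lambda(c)$ is the common zero set of the two $C^1$ functionals $G$ and $Q$, so it suffices to check that $(dG(u), dQ(u)) : X \to \R^2$ is surjective at each $u \in \Lambda(c)$. Since $dG(u) = 2u \neq 0$, a failure of surjectivity would force $dQ(u) = \lambda\, dG(u)$ for some $\lambda \in \R$, that is, $u$ would solve $-\Delta u - \frac{a(p-2)}{2}|u|^{p-2} u = \lambda u$; arguing as in Lemma \ref{submanifold1}, Lemma \ref{lem_Q=0} then yields $A(u) = \frac{a(p-2)^2}{2p} C(u)$, which combined with $Q(u)=0$ gives $A(u)=k_0$, and Lemma \ref{ilu} forces $a \geq K_2\, \gamma^{\frac{4-p}{2}} c^{3-p}$, contradicting the hypothesis. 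Hence $\Lambda(c)$ is a codimension-one $C^1$ submanifold of $S(c)$, and the maximizer $u$ is a constrained critical point: there exist $\mu, \nu \in \R$ with
$$F'(u) = \mu\, Q'(u) + \nu\, G'(u) \qquad \text{in } X^*.$$

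The decisive and most delicate step will be to prove $\mu = 0$, since then $F'(u) = \nu\, G'(u)$ is exactly the assertion that $u$ is a critical point of $F$ on $S(c)$. I would test the displayed relation against the infinitesimal dilation $\zeta := \frac{d}{dt}\big|_{t=1} u^t$. Because the $L^2$-norm is invariant under $u \mapsto u^t$ one has $\langle G'(u), \zeta \rangle = 0$; because $\langle F'(u), \zeta \rangle = g_u'(1) = Q(u) = 0$; and because $g_u'(t) = Q(u^t)/t$ together with $Q(u)=0$ gives $\langle Q'(u), \zeta \rangle = \frac{d}{dt}\big|_{t=1} Q(u^t) = g_u''(1)$, the relation collapses to $0 = \mu\, g_u''(1)$. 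It therefore remains to show $g_u''(1) \neq 0$, i.e. that $\Lambda^0(c) = \emptyset$ under our assumptions. This is exactly where the strict inequality $a < K_2\, \gamma^{\frac{4-p}{2}} c^{3-p}$ is indispensable: if $u \in \Lambda^0(c)$ then $Q(u) = 0$ and $\frac{d}{dt}\big|_{t=1} Q(u^t) = 0$, whence Lemma \ref{lem_A(u)=k_0_2} gives $A(u) = k_0$ and Lemma \ref{ilu} forces $a \geq K_2\, \gamma^{\frac{4-p}{2}} c^{3-p}$, a contradiction. Consequently $g_u''(1) \neq 0$, so $\mu = 0$ and the proof is complete. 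The main obstacle is thus not the existence of the maximizer, which is handed to us by Lemma \ref{lem_maximum_local}, but the vanishing of the Lagrange multiplier $\mu$, which hinges entirely on the non-degeneracy $\Lambda^0(c) = \emptyset$ secured by the strict upper bound on $a$.
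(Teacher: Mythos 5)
Your overall architecture matches the paper's: boundedness and the existence of a maximizer come from Lemmas \ref{coercive2} and \ref{lem_maximum_local}, the submanifold property of $\Lambda(c)$ is obtained by the surjectivity argument (your direct route via Lemma \ref{ilu} is essentially the combination of Lemmas \ref{lem_T(c)sousvariete2-Louis} and \ref{l2} in the paper), and everything reduces to showing that the multiplier $\mu$ attached to $Q$ vanishes. Moreover, the identity you arrive at, $\mu\, g_u''(1)=0$, is \emph{exactly} the relation the paper derives (its equation $\frac{p-2}{p}\mu(4-p)\,a\,C(u)=\frac{\gamma c^2}{2}\mu$ is just $\mu\bigl(2A(u)-a\frac{(p-2)^2}{p}C(u)\bigr)=0$ rewritten using $Q(u)=0$), and the concluding contradiction via $A(u)=k_0$ and Lemma \ref{ilu} is the same.

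The genuine gap is in how you derive that identity. You test the Lagrange relation $F'(u)=\mu Q'(u)+\nu G'(u)$ (an identity in $X^*$) against the infinitesimal dilation $\zeta=\frac{d}{dt}\big|_{t=1}u^t=u+x\cdot\nabla u$ and invoke the chain rule to write $\langle F'(u),\zeta\rangle=g_u'(1)$, $\langle Q'(u),\zeta\rangle=\frac{d}{dt}\big|_{t=1}Q(u^t)$, etc. But for a general $u\in X$ the function $x\cdot\nabla u$ need not belong to $X$ (nor even to $H^1(\R^2)$), and the curve $t\mapsto u^t$ need not be differentiable as a map into $X$; the maps $t\mapsto F(u^t)$, $t\mapsto Q(u^t)$ are smooth only because of the explicit scaling formulas, which does not entitle you to identify their $t$-derivatives with dual pairings against an element of $X$. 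This is precisely the obstruction that the paper's Lemma \ref{lem_Q=0} is built to circumvent: there the Pohozaev identity is proved by multiplying the equation by $x\cdot\nabla u$ on balls $B_R(0)$ and controlling the boundary terms along a suitable sequence $R_n\to\infty$, without assuming $x\cdot\nabla u\in X$ or any decay of $u$. Accordingly, the paper's proof of the theorem rewrites the Lagrange relation as the modified elliptic equation $-(1-2\mu)\Delta u-\gamma(\log|\cdot|*|u|^2)u-\lambda u=a(1-\mu(p-2))|u|^{p-2}u$ and applies Lemma \ref{lem_Q=0} to \emph{that} equation, which yields your identity rigorously. Your argument becomes correct once you replace the formal pairing with this application of the Pohozaev identity; as written, the step ``collapses to $0=\mu\,g_u''(1)$'' is unjustified.
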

\begin{proof}
We shall see in Lemma \ref{l2} that, under the assumptions of the theorem, 
 $\Lambda(c)$ is a submanifold of X of codimension 2.
By Lemma \ref{lem_maximum_local} we  know that there exists $u\in \Lambda(c)$ such that $$F(u)=\underset{\Lambda(c)}{\max} \;F.$$
Since $u$ is a maximizer of $F$ on $\Lambda(c)$, hence a critical point, 
there exist two Lagrange multipliers $\lambda,\mu$ such that
\begin{equation}\label{t1}
dF(u)= \lambda \;(u,\cdot)+\mu\; dQ(u).
\end{equation}
Our aim is to show that $\mu =0$. Observe that (\ref{t1}) can be rewritten as
\begin{equation}\label{t2}
- (1 - 2 \mu) \Delta u - \gamma (log|\cdot|*|u|^2)u - \lambda u = a (1 - \mu(p-2)) |u|^{p-2}u
\end{equation}
and thus from Lemma \ref{lem_Q=0} we obtain that
\begin{equation}\label{t3}
(1- 2 \mu) A(u) - a \frac{(p-2)(1- \mu(p-2))}{p} C(u) + \frac{\gamma c^2}{4}=0.
\end{equation}
Now using that $Q(u)=0$ we obtain from (\ref{t3}) that
\begin{equation}\label{t4}
 \frac{p-2}{p}\mu (4-p) C(u) = \frac{\gamma c^2}{2}\mu.
\end{equation}
If $\mu =0$ we are done, so we assume that $\mu \neq 0$. We then deduce that
\begin{equation}\label{t5}
C(u) = \frac{p}{a(p-2)(4-p)} \frac{\gamma c^2}{2}
\end{equation}
and inserting (\ref{t5}) into $Q(u)=0$ we deduce that $A(u) = k_0$. This contradiction proves that $\mu=0$ namely that $u$ is a critical point of $F$ restricted to $S(c)$.
\end{proof}

\begin{remark} We also would like to express the sufficient conditions given in Theorem \ref{lem_T(c)sousvariete234}  in term of $c >0$ since an  interesting phenomenon then occurs. 
Actually, there is a strong qualitative change depending on the position of $p$ with respect to the, thus critical, exponent $3$. 
In particular, our result says that $F|_{S(c)}$ has no critical point 
\begin{itemize}
\item if $c >0$ is large for $2<p<3$.
\item if $c>0$ is small for $3<p<4$.
\item if $a< K_1\; \gamma^\frac{4-p}{2}$ but without condition on $c >0$ if $p=3$.
\end{itemize}
In the following table, we express the sufficient conditions given by Theorem \ref{lem_T(c)sousvariete234} in term of $c$.
\medskip

\begin{tabular}{|c|c|c|c|}
\hline 
  & Nonexistence & Existence &$c_i,\;i=1,2$\\ 
\hline 
$2<p<3$  & $c>c_1$ & $c_2<c\leq c_1$&$\frac{1}{{K_i}^{\frac{1}{3-p}}}\frac{a^\frac{1}{3-p}}{\gamma^\frac{4-p}{2(3-p)}}$ \\ 
\hline 
$p=3$ &   $a<  K_1\; \gamma^\frac{4-p}{2}$ &$K_1\; \gamma^\frac{4-p}{2}\leq a< K_2\; \gamma^\frac{4-p}{2}$& \qquad\qquad\qquad\qquad\qquad\qquad \\ 
\hline 
$3<p<4$    &$c<c_1$& $c_1\leq c< c_2$ &${K_i}^{\frac{1}{p-3}}\frac{\gamma^\frac{4-p}{2(p-3)}}{a^\frac{1}{p-3}}$\\ 
\hline 
\end{tabular} 
\end{remark}


\subsection{Proof of Theorem \ref{infreach233} }\label{Twosolution}

Considering a sequence $(u_n) \subset S(c)$ such that $C(u_n) =1$ and $A(u_n) \to \infty$, we deduce from (\ref{nocross}) that for any $a>0$ and $c >0$ there always exists a $u \in S(c)$ such that $u^s  \not \in \Lambda(c)$ for any $s>0$. For this reason we shall localized our search of critical points into the subset of $S(c)$ given by
$$		
V= \{u \in S(c) \  |  \ (t^*_u)^2 A(u) > k_0\}.
$$	
The following result gives an alternative characterization of $V$ and some first properties.
\begin{lem}\label{vuoto}
Assume that $\gamma <0$, $a >0$ and $p<4$. We have
\begin{enumerate}
\item $u \in V \Longleftrightarrow \inf_{t>0}Q(u^t) = Q(u^{t_u^*}) <0.$
\item 
	If $a > K_1 \gamma^{\frac{4-p}{2}} c^{3-p}$, then $V$ is an open, not empty subset in $S(c)$.
\end{enumerate}
\end{lem}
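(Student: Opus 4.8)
The plan is to establish item (1), which is the analytic core of the lemma, and then deduce item (2) from it together with Lemma \ref{fuori}. First I would write $g_u'(t)=\frac{1}{t}\phi_u(t)$ with
$\phi_u(t):=Q(u^t)=A(u)t^2-a\frac{p-2}{p}C(u)t^{p-2}+\frac{\gamma c^2}{4}$, exactly as in the proof of Lemma \ref{fuori}. Since $p<4$, one has $\phi_u(t)\to\frac{\gamma c^2}{4}>0$ as $t\to 0^+$ and $\phi_u(t)\to+\infty$ as $t\to\infty$, while \eqref{primaplus3}--\eqref{secondaplus3} show that $\phi_u$ is strictly decreasing on $(0,t^*_u)$ and strictly increasing on $(t^*_u,\infty)$. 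Hence $t^*_u$ from \eqref{tstar} is the unique global minimizer of $\phi_u$ on $(0,\infty)$, so that $\inf_{t>0}Q(u^t)=Q(u^{t^*_u})$ for every $u\in S(c)$, which is the equality asserted in item (1).

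It then remains to locate the sign of $Q(u^{t^*_u})$. Using the defining identity \eqref{characterization-star} to replace $a\frac{p-2}{p}C(u^{t^*_u})$ by $\frac{2}{p-2}A(u^{t^*_u})$, and recalling that $A(u^{t^*_u})=(t^*_u)^2 A(u)$, I obtain
\[
Q(u^{t^*_u})=\Big(1-\tfrac{2}{p-2}\Big)A(u^{t^*_u})+\frac{\gamma c^2}{4}=-\frac{4-p}{p-2}\,(t^*_u)^2 A(u)+\frac{\gamma c^2}{4}.
\]
Since $k_0=\frac{p-2}{4-p}\frac{\gamma c^2}{4}$, this yields at once the equivalence $Q(u^{t^*_u})<0\iff (t^*_u)^2 A(u)>k_0$, and the right-hand inequality is precisely the defining condition for $u\in V$. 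This closes item (1).

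For item (2), openness will follow from continuity. The maps $u\mapsto A(u)$ and $u\mapsto C(u)$ are continuous on $X$ and strictly positive on $S(c)$ (indeed $u\neq 0$ forces $C(u)>0$, and $A(u)=0$ would make $u$ a nonzero constant, impossible in $L^2(\R^2)$), so $t^*_u$, and hence $u\mapsto Q(u^{t^*_u})=-\frac{4-p}{p-2}(t^*_u)^2 A(u)+\frac{\gamma c^2}{4}$, depend continuously on $u\in S(c)$; by item (1), $V$ is the preimage of $(-\infty,0)$ under a continuous map and is therefore open in $S(c)$. For nonemptiness I would invoke Lemma \ref{fuori}: when $a>K_1\gamma^{\frac{4-p}{2}}c^{3-p}$ its proof gives $\inf_{u\in S(c)}Q(u)<0$, so some $u\in S(c)$ satisfies $Q(u)<0$; then $\inf_{t>0}Q(u^t)\le Q(u^1)=Q(u)<0$, and item (1) gives $u\in V$. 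The only genuinely substantive step here is the evaluation of $Q$ at $t^*_u$ through \eqref{characterization-star} and the recognition that its sign is governed exactly by the threshold $k_0$; everything else is bookkeeping, the sole points needing care being the strict positivity of $A$ and $C$ on $S(c)$ (so that $t^*_u$ is well defined and continuous) and the single appeal to Lemma \ref{fuori} that seeds the nonemptiness of $V$.
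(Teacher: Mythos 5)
Your proposal is correct and follows essentially the same route as the paper: item (1) is the observation that $\phi_u(t)=Q(u^t)$ is minimized exactly at $t^*_u$ together with the algebraic identity, via \eqref{characterization-star}, that $Q(u^{t^*_u})=-\frac{4-p}{p-2}(t^*_u)^2A(u)+\frac{\gamma c^2}{4}$, whose sign is governed by the threshold $k_0$; item (2) is openness by continuity of $u\mapsto t^*_u$ plus nonemptiness imported from the proof of Lemma \ref{fuori}. Your write-up is merely a bit more explicit than the paper's (spelling out the positivity of $A$ and $C$ on $S(c)$ and the preimage argument for openness), but there is no substantive difference.
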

\begin{proof}
By definition, $u \in V$ if and only if
\begin{equation}\label{491}
A(u^{{t^*_u}}) > k_0 = \frac{(p-2)}{(4- p)} \frac{\gamma c^2}{4}.
\end{equation}
But \eqref{491} is equivalent to 
\begin{equation}\label{492}
A(u^{{t^*_u}}) + \frac{\gamma c^2}{4} < 
	\frac{2}{p-2} A(u^{{t_u^*}})
\end{equation}
and recording that by definition of $t_u^*$, 
$$ 
	\frac{2}{p-2} A(u^{{t^*_u}}) = a \frac{(p-2)}{p}C(u^{{t^*_u}})
	$$
it is also equivalent to 
$$ 
	A(u^{{t^*_u}})  a \frac{(p-2)}{p}C(u^
	{{t^*_u}})  + \frac{\gamma c^2}{4} < 0
	$$
namely to $Q(u^{{t^*_u}})<0.$ This proves the first point. Now, arguing as in the proof of 
Lemma \ref{fuori},  we see that if
	$a >  K_1 \gamma^{\frac{4-p}{2}} c^{3-p}$, there exists $u \in S(c)$ such that $Q(u) < 0$ proving that $V$ is non empty. The 
	fact that $V$ is open in $S(c)$, follows from the continuity of the map $u \mapsto t_u^*$.
\end{proof}

\begin{remark}\label{advantageV}
For future reference note that it can be checked, by direct calculations, that if $Q(u)=0$ and $A(u)=k_0$ then $t_u^*=1$ and thus $u \notin V$.
\end{remark}

Our next result can be deduced from the characterization of $V$ given in Lemma \ref{vuoto} but we provide here a proof directly based on the definition of $V$.

\begin{lem}\label{dentro22}
Assume that $\gamma <0$ and $p <4$.
	Let $a > K_1 \gamma^{\frac{4-p}{2}} c^{3-p}$, then for any $u \in V$, we have that 
	$ u^s \in V$ for any $s>0$.
\end{lem}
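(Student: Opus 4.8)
The plan is to exploit the fact that the quantity $(t^*_u)^2 A(u)$ governing membership in $V$ is invariant under the dilation action $u \mapsto u^s$, while the threshold $k_0 = \frac{(p-2)}{(4-p)}\frac{\gamma c^2}{4}$ depends only on $p,\gamma,c$ and not on $u$. Since $\|u^s\|_2^2 = \|u\|_2^2 = c$ guarantees $u^s \in S(c)$ for every $s>0$, the strict inequality $(t^*_u)^2 A(u) > k_0$ defining $V$ will then pass verbatim from $u$ to $u^s$.

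First I would record the elementary scaling identities of \eqref{scalings}, namely $A(u^s)=s^2 A(u)$ and $C(u^s)=s^{p-2}C(u)$. Substituting these into the definition \eqref{tstar} of the critical abscissa yields the dilation law
$$
t^*_{u^s}=\Bigl[\frac{a(p-2)^2 C(u^s)}{2p\,A(u^s)}\Bigr]^{1/(4-p)}=\Bigl[\frac{a(p-2)^2 C(u)}{2p\,A(u)}\,s^{\,p-4}\Bigr]^{1/(4-p)}=s^{-1}\,t^*_u,
$$
where the collapse to the exponent $-1$ comes from $(p-4)/(4-p)=-1$.

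The key step is then immediate: multiplying by $A(u^s)=s^2 A(u)$ gives
$$
(t^*_{u^s})^2 A(u^s)=\frac{(t^*_u)^2}{s^2}\,s^2 A(u)=(t^*_u)^2 A(u),
$$
so this quantity is exactly preserved along the dilation orbit. Combined with $u\in V$, i.e. $(t^*_u)^2 A(u)>k_0$, and with $u^s\in S(c)$, this yields $(t^*_{u^s})^2 A(u^s)>k_0$, that is $u^s\in V$, as required.

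I do not expect a genuine obstacle here: the entire content is the observation that $t^*_u$ scales like $s^{-1}$, so that $(t^*_u)^2 A(u)=A(u^{t^*_u})$ — the value of $A$ at the minimizer of $t\mapsto Q(u^t)$ — is constant on each dilation orbit, reflecting that the dilations form a one-parameter group. Equivalently one could invoke the characterization $u\in V\iff \inf_{t>0}Q(u^t)<0$ of Lemma \ref{vuoto} together with $(u^s)^t=u^{st}$, but the statement asks for an argument resting directly on the definition of $V$. The only points worth double-checking are the sign of the exponent and the $u$-independence of $k_0$, both of which are routine.
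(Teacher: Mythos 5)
Your proof is correct and follows essentially the same route as the paper: both compute the dilation law $t^*_{u^s}=s^{-1}t^*_u$ from the scaling identities for $A$ and $C$, and then observe that $(t^*_{u^s})^2A(u^s)=(t^*_u)^2A(u)>k_0$, so membership in $V$ is preserved along the dilation orbit. No issues.
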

\begin{proof}
	Let $u \in V$, namely $u \in S(c)$ and $(t^*_u)^2 A(u) >k_0$. 
	We define $v=  u^s$ and
	we evaluate 
	$$ 
	{{t^*_v}} =  \Bigl[ a \frac{(p-2)^2}{2p}
	\frac{C(v)}{A(v)} \Bigr]^{1/(4-p)} 
	= \Bigl[ \frac{(s)^{p-2}}{(s)^2}
	a \frac{(p-2)^2}{2p} \frac{C(u)}{A(u)} \Bigr]^{1/(4-p)} = \frac{{t^*_u}}{s}.
	$$
	It follows that 
	$$
	{{t^*_v}}^2 A(v)= \frac{({t^*_u})^2}{s^2} s^2 A(u) =
	(t^*_u)^2 A(u)
	>k_0
	$$
	and thus $v \in V$.
\end{proof}

Let us now denote 
$$
\Lambda^+(c) = \{ u \in V \ |\ g'_u(1)=0, \ g''_u(1) >0 \}, 
$$
$$
\Lambda^0(c) = \{ u \in  V \ |\ g'_u(1)=0, \ g''_u(1) =0 \},
$$
$$
\Lambda^-(c) = \{ u \in  V \ |\ g'_u(1)=0, \ g''_u(1) <0 \}.
$$
Observe that  $\Lambda^0(c) = \emptyset$  by Lemma \ref{lem_A(u)=k_0_2} and Remark \ref{advantageV}.

\begin{lem}\label{minimo8}
Let $a > K_1 \gamma^{\frac{4-p}{2}} c^{3-p}$. For any $u \in V$, there exists 
	\begin{enumerate}
		\item a unique $s_u^+ >0$ such that  $ u^{s_u^+} \in  \Lambda^+(c)$.
		Such $s_u^+$ is a strict local minimum point for $g_u$.
		\item a unique $s_u^- >0$ such that  $ u^{s_u^-} \in  \Lambda^-(c)$.
		Such $s_u^-$ is a strict local maximum point for $g_u$.
	\end{enumerate} 
\end{lem}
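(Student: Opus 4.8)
The plan is to reduce everything to the one-variable function $\phi_u(t):=Q(u^t)=A(u)t^2-a\frac{p-2}{p}C(u)t^{p-2}+\frac{\gamma c^2}{4}$, exploiting the identity $g_u'(t)=\frac{1}{t}\phi_u(t)$. Since $t>0$, the critical points of $g_u$ on $(0,\infty)$ are exactly the zeros of $\phi_u$, and the sign of $g_u''$ at such a zero will decide whether the corresponding dilation lands in $\Lambda^+(c)$ or $\Lambda^-(c)$. So the whole statement will follow once I control the shape of $\phi_u$ and locate its zeros.

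First I would pin down the monotonicity of $\phi_u$. Using the scalings $A(u^t)=t^2A(u)$, $C(u^t)=t^{p-2}C(u)$, I compute $\phi_u'(t)=\frac{1}{t}\bigl(2A(u^t)-a\frac{(p-2)^2}{p}C(u^t)\bigr)$, so (\ref{primaplus3}) and (\ref{secondaplus3}) immediately give $\phi_u'<0$ on $(0,t_u^*)$ and $\phi_u'>0$ on $(t_u^*,\infty)$: thus $\phi_u$ strictly decreases to its global minimum at $t_u^*$ and then strictly increases. Because $p>2$ one has $\phi_u(t)\to\frac{\gamma c^2}{4}>0$ as $t\to0^+$, and because $p<4$ one has $\phi_u(t)\to+\infty$ as $t\to\infty$. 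The decisive input is the hypothesis $u\in V$: by Lemma \ref{vuoto} this is exactly the condition $\phi_u(t_u^*)=\inf_{t>0}Q(u^t)<0$. Combining this with the monotonicity and the two positive limits, the intermediate value theorem produces exactly two zeros $s_u^-$ and $s_u^+$ with $0<s_u^-<t_u^*<s_u^+<\infty$, and these are the only critical points of $g_u$.

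To finish I would read off the second derivative. At a zero $s$ of $\phi_u$ we have $g_u''(s)=\frac{1}{s}\phi_u'(s)=\frac{1}{s^2}\bigl(2A(u^s)-a\frac{(p-2)^2}{p}C(u^s)\bigr)$, so (\ref{primaplus3}) at $s_u^-<t_u^*$ gives $g_u''(s_u^-)<0$ (a strict local maximum) while (\ref{secondaplus3}) at $s_u^+>t_u^*$ gives $g_u''(s_u^+)>0$ (a strict local minimum). From $(u^s)^t=u^{st}$ one has $g_{u^s}(t)=g_u(st)$, hence $g_{u^s}'(1)=s\,g_u'(s)$ and $g_{u^s}''(1)=s^2g_u''(s)$; the signs therefore transfer, giving $u^{s_u^-}\in\Lambda^-(c)$ and $u^{s_u^+}\in\Lambda^+(c)$, both of which lie in $V$ by Lemma \ref{dentro22}. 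Uniqueness is automatic, since $\phi_u$ has no other zero and only $s_u^+$ carries $g_u''>0$, only $s_u^-$ carries $g_u''<0$.

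The step I expect to require the most care is not any single computation but the correct use of the membership $u\in V$: it is precisely the content of Lemma \ref{vuoto} that $u\in V$ forces $\phi_u(t_u^*)<0$, which is what upgrades ``$\phi_u$ has a minimum'' to ``$\phi_u$ has two sign changes''. I would also watch the bookkeeping of the two labels, since here --- in contrast with the $\gamma>0$ case of Lemma \ref{intersection} --- the fiber map first rises to the local maximum $s_u^-$ and only afterwards descends to the local minimum $s_u^+$, so that $s_u^-<s_u^+$.
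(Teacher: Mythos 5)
Your proof is correct, and it reorganizes the paper's argument in a way that is slightly cleaner. The paper works directly with $g_u'$: it computes $g_u'(t^*_u)<0$ from the membership $u\in V$, combines this with the asymptotics $g_u(t)\to-\infty$ as $t\to0^+$ and $g_u(t)\to+\infty$ as $t\to\infty$ to produce a local maximum below $t^*_u$ and a local minimum above it, reads off the sign of $g_u''$ at each critical point from (\ref{primaplus3})--(\ref{secondaplus3}), and then proves uniqueness by a separate contradiction argument (a second local minimum would force an intermediate local maximum whose second derivative has the wrong sign). You instead promote the quantity $2A(u^t)-a\frac{(p-2)^2}{p}C(u^t)$ from a pointwise sign condition to the actual derivative of $\phi_u(t)=Q(u^t)=t\,g_u'(t)$, so that (\ref{primaplus3})--(\ref{secondaplus3}) yield strict monotonicity of $\phi_u$ on $(0,t^*_u)$ and on $(t^*_u,\infty)$; together with $\phi_u(0^+)=\frac{\gamma c^2}{4}>0$, $\phi_u(\infty)=+\infty$ and the characterization $u\in V\Leftrightarrow\phi_u(t^*_u)<0$ from Lemma \ref{vuoto}, this gives existence \emph{and} uniqueness of the two critical points in one stroke, eliminating the paper's contradiction step. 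The remaining ingredients match the paper's: the identity $g_u''(s)=\phi_u'(s)/s$ at a zero of $\phi_u$ recovers the paper's formula (\ref{pos3}) for the second derivative, the relation $g_{u^s}''(1)=s^2g_u''(s)$ transfers the signs to $\Lambda^{\pm}(c)$, and Lemma \ref{dentro22} supplies the membership $u^{s^{\pm}_u}\in V$ required by the definition of $\Lambda^{\pm}(c)$ in this section. Both approaches use exactly the same inputs; yours buys a shorter uniqueness argument, while the paper's stays closer to the template of Lemma \ref{intersection} from the $\gamma>0$ case.
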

\begin{proof}
	Fix $u \in V$. Since $({t^*_u})^2 A(u) > k_0$, we deduce that 
	
	\begin{align}
g_u'({{t^*_u}}) & = \frac{1}{{t^*_u}} 
	\bigl( A(u^{{t^*_u}}) + \frac{\gamma c^2}{4} - a \frac{(p-2)}{p} C(u^{{t^*_u}}) \bigr) \nonumber \\
& =  \frac{1}{{t^*_u}} \bigl( A(u^{{t^*_u}}) + \frac{\gamma c^2}{4} -  \frac{2}{(p-2)} A(u^{t^*_u}) \bigr)   = \frac{1}{{t^*_u}} \bigl( \frac{\gamma c^2}{4} - \frac{(4-p)}{(p-2)} {{t^*_u}}^2 A(u) \bigr) <0.
\label{keypoint}
\end{align}
	Moreover by $(\ref{primaplus3})$ we have that for any $t \in (0, {t^*_u})$ we have
	
	\begin{align}
g_u'(t) & = \frac{1}{t} 
	\bigl( A(u^{t}) + \frac{\gamma c^2}{4} - a \frac{(p-2)}{p} C(u^{t}) \bigr)  \nonumber \\
& <  \frac{1}{{t}} \bigl( A(u^{{t}}) + \frac{\gamma c^2}{4} -  \frac{2}{(p-2)} A(u^{t}) \bigr)   = \frac{1}{{t}} \bigl( \frac{\gamma c^2}{4} - \frac{(4-p)}{(p-2)} {{t}}^2 A(u) \bigr).
\label{key2}
\end{align}
	By $(\ref{key2})$ we infer that there exists $\delta >0$ such that  for any $t \in ({t^*_u} - \delta, {t^*_u})$,
	$g_u'(t) <0$ and thus $g_u(t)$ is decreasing in $({t^*_u} -\delta, {t^*_u})$.
	Taking into account that the function $g_u(t) \to - \infty $ as $t \to 0^+$ and $g_u(t) \to + \infty$ as $t \to + \infty$,
	we conclude that there exists at least a critical point $s^+_u> t_u^*$ which is a local minimum point of $g_u$ and a critical point 
$s_u^- < t_u^*$ which is a local maximum point of $g_u$.

	Since $s^+_u >t^*_u$, from  $(\ref{secondaplus3})$ we derive that
	\begin{equation}\label{disc}
	2 (s^+_u)^2 A(u)  - a \frac{(p-2)^2}{p} (s^+_u)^{p-2} C(u) >0.
	\end{equation}
	Moreover from $(\ref{disc})$ and the fact that $g_u'(s^+_u)=0$, we derive that 	 	
	\begin{eqnarray}\label{pos3}
	g_u''(s^+_u)=  \frac{1}{(s^+_u)^2} 
	\bigl(A(u^{s^+_u}) - \frac{\gamma c^2}{4} - a \frac{(p-2)(p-3)}{p} C(u^{s^+_u}) \bigr) \\ 
	= \frac{1}{(s^+_u)^2} 
	\bigl( 2 (s^+_u)^2 A(u)  - a \frac{(p-2)^2}{p} (s^+_u)^{p-2} C(u) \bigr) > 0
	\end{eqnarray}
	
	Therefore $s^+_u$ is a strict minimum point for $g_u$ 	
	and $u^{s^+_u} \in \Lambda^+(c)$.

	We have to show that $s^+_u$ is unique.
	By contradiction we assume that 
	there exists  $z^+_u>0$ an other critical point of $g_u$ which is a local minimum point.

	Firstly we observe that if $0< z^+_u <t_u^*$, then from  $g_u'(z^+_u)=0$ and $(\ref{primaplus3})$ it results 
	\begin{eqnarray}\label{nib2}
	g_u''(z^+_u)
	= \frac{1}{(z^+_u)^2} 
	\bigl( 2 (z^+_u)^2 A(u)  - a \frac{(p-2)^2}{p} (z^+_u)^{p-2} C(u) \bigr) < 0
	\end{eqnarray}
	which is a contradiction.
	This implies that $z^+_u> t_u^*$ and thus arguing as before we have 
	$g_u''(z^+_u) < 0$ namely  $u^{z^*_u} \in \Lambda^+(c)$. 
	We derive the existence of an other critical point $\theta_u >t_u^*$, which is a local maximum for $g_u$.
	Taking into account $(\ref{secondaplus3})$, we again deduce
	$g_u''(\theta_u) > 0$, which is a contradiction.
	Therefore the point $s^+_u$ is unique.

Now a direct adaptation of the argument used for $s^-_u$ leads to conclude that $s_u^+>0$ is the unique local maximum for $g_u$.
\end{proof}

For future reference note
\begin{lem}\label{regularite2}
Assume that $\gamma <0$ and $p<4$. If 
$a > K_1\; \gamma^\frac{4-p}{2}\; c^{3-p} $,	
the maps $u \in V \mapsto s_{u^+} \in \R$ and $u \in V \mapsto s_{u^-}\in \R$ are of class $C^1$.
\end{lem}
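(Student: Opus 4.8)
The plan is to argue exactly as in the proof of Lemma \ref{regularite}, by a direct application of the Implicit Function Theorem, the only change being that we now work over the open set $V \subset S(c)$ rather than over all of $S(c)$. First I would introduce the map
$$\Psi : (0,\infty) \times V \to \R, \qquad \Psi(s,u) = g'_u(s),$$
and observe, using the explicit formula \eqref{deriprima}, that
$$\Psi(s,u) = \frac{1}{s}\Bigl( s^2 A(u) + \frac{\gamma c^2}{4} - \frac{a(p-2)}{p}\, s^{p-2} C(u)\Bigr).$$
Since the dependence on $s$ is smooth and $A$ and $C$ are of class $C^1$ on $X$, hence on the open subset $V$ of the $C^1$-manifold $S(c)$, the map $\Psi$ is $C^1$ on $(0,\infty) \times V$.

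Next I would fix $u_0 \in V$ and examine the point $(s^+_{u_0}, u_0)$. By Lemma \ref{minimo8} we have $\Psi(s^+_{u_0}, u_0) = g'_{u_0}(s^+_{u_0}) = 0$, and the same lemma establishes, see \eqref{pos3}, that $s^+_{u_0}$ is a strict local minimum of $g_{u_0}$ with
$$\partial_s \Psi(s^+_{u_0}, u_0) = g''_{u_0}(s^+_{u_0}) > 0.$$
This nondegeneracy is precisely the hypothesis required by the Implicit Function Theorem, which then yields a $C^1$ map $u \mapsto s^+_u$ defined in a neighbourhood of $u_0$ in $V$ and satisfying $\Psi(s^+_u, u) = 0$; by the uniqueness part of Lemma \ref{minimo8} this local branch must coincide with the global assignment $u \mapsto s^+_u$. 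As $u_0 \in V$ was arbitrary, $u \mapsto s^+_u$ is $C^1$ on $V$. The argument for $u \mapsto s^-_u$ is identical, now using that $s^-_{u_0}$ is a strict local maximum of $g_{u_0}$, so that $\partial_s \Psi(s^-_{u_0}, u_0) = g''_{u_0}(s^-_{u_0}) < 0$.

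I do not anticipate any serious obstacle, since the computation of $g''_u$ and the strict signs of the second derivatives at $s^\pm_u$ are already carried out inside Lemma \ref{minimo8}, and the fact that $\Lambda^0(c) = \emptyset$, which follows from Lemma \ref{lem_A(u)=k_0_2} together with Remark \ref{advantageV}, guarantees that these critical points are always nondegenerate, so $\partial_s \Psi$ never vanishes there. The only mild point to check, and the place where the present situation differs from Lemma \ref{regularite}, is that $V$ has a boundary; but since $V$ is open in $S(c)$ and the defining inequality $(t^*_u)^2 A(u) > k_0$ depends continuously on $u$ through the map $u \mapsto t^*_u$, the local $C^1$ branch produced by the Implicit Function Theorem stays inside $V$ after possibly shrinking the neighbourhood, so no difficulty arises.
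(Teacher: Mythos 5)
Your argument is correct and is essentially identical to the paper's own proof, which likewise applies the Implicit Function Theorem to $\Psi(s,u)=g'_u(s)$ using the strict signs $g''_u(s^+_u)>0$, $g''_u(s^-_u)<0$ from Lemma \ref{minimo8} and the fact that $\Lambda^0(c)=\emptyset$. Your additional remarks on gluing the local branches via uniqueness and on staying inside the open set $V$ are sensible elaborations of the same one-line argument.
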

\begin{proof}
It is a direct application of the Implicit Function Theorem on the $C^1$ function
$\Psi : \R \times V  \to \R$, defined by  				
$\Psi(s,u)= g'_u(s)$, taking into account that  $\Psi(s^\pm_u,u)=0$, $\partial_s \Psi(s^+_u,u) = g_u''(s^+_u) >0$, 
$\partial_s \Psi(s^+_u,u) = g_u''(s^-_u) <0$ and $\Lambda^0(c)= \emptyset$ by  Lemma \ref{lem_A(u)=k_0_2} and Remark \ref{advantageV}.
\end{proof}

	\begin{lem}\label{lem_T(c)sousvariete2-Louis}
	Let $\gamma <0$ and $p <4$. Assume that  $a > K_1 \gamma^{\frac{4-p}{2}} c^{3-p}$, 
			 then $\Lambda(c) \cap V $ is a submanifold, of class $C^1$, of codimension $2$ of $X$ and a submanifold of codimension $1$ in $S(c)$.
		\end{lem}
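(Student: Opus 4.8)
The plan is to follow the template of Lemma \ref{submanifold1} from the $\gamma>0$ case, exploiting that $V$ is open in $S(c)$ so that the submanifold property is local and is preserved under intersection with $V$. First I would write $\Lambda(c)\cap V=\{u\in V:\ G(u)=0 \text{ and } Q(u)=0\}$, where $G(u):=\|u\|_2^2-c$, and recall from Lemma \ref{vuoto} that $V$ is open in $S(c)$. Both $G$ and $Q$ are of class $C^1$ on $X$, and $S(c)=\{G=0\}$ is a $C^1$ submanifold of $X$ of codimension $1$ since $dG(u)[\varphi]=2\int_{\R^2}u\varphi\,dx\neq 0$ for $u\neq 0$. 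Hence it suffices to prove that for every $u\in\Lambda(c)\cap V$ the map $(dG(u),dQ(u)):X\to\R^2$ is surjective; the regular value theorem then gives the codimension-$2$ structure in $X$ (equivalently codimension-$1$ in $S(c)$) locally, and intersecting with the open set $V$ preserves it.

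Next I would argue by contradiction, assuming $(dG(u),dQ(u))$ fails to be surjective at some $u\in\Lambda(c)\cap V$. Since $dG(u)\neq 0$, the two differentials are then linearly dependent, so there is $\nu\in\R$ with $dQ(u)=2\nu(u,\cdot)$, that is $2\int_{\R^2}\nabla u\cdot\nabla\varphi\,dx-a(p-2)\int_{\R^2}|u|^{p-2}u\varphi\,dx=2\nu\int_{\R^2}u\varphi\,dx$ for all $\varphi\in X$. Thus $u$ weakly solves $-\Delta u-\frac{a(p-2)}{2}|u|^{p-2}u=\nu u$, which is an equation of the type covered by Lemma \ref{lem_Q=0} with vanishing logarithmic coefficient, nonlinear coefficient $\frac{a(p-2)}{2}$, and Lagrange parameter $-\nu$. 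Applying that lemma yields $A(u)=\frac{a(p-2)^2}{2p}C(u)$, which is exactly the condition $\left.\frac{d}{dt}\right|_{t=1}Q(u^t)=0$; combined with $Q(u)=0$, Lemma \ref{lem_A(u)=k_0_2} then forces $A(u)=k_0$.

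The hard part, and the genuine departure from the $\gamma>0$ argument, is locating the contradiction. One cannot simply invoke Lemma \ref{ilu}: having $A(u)=k_0$ together with $Q(u)=0\leq 0$ would only give $a\geq K_2\,\gamma^{\frac{4-p}{2}}c^{3-p}$, which is fully compatible with the standing hypothesis $a>K_1\,\gamma^{\frac{4-p}{2}}c^{3-p}$ since $K_1<K_2=2^{\frac{4-p}{2}}K_1$. This is precisely the reason the problem was localized to $V$ in the first place. Instead, I would extract the contradiction from Remark \ref{advantageV}: the equalities $Q(u)=0$ and $A(u)=k_0$ force $t_u^*=1$, whence $u\notin V$, contradicting $u\in\Lambda(c)\cap V\subset V$. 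This rules out the degenerate case, establishes surjectivity of $(dG(u),dQ(u))$ at every point of $\Lambda(c)\cap V$, and thereby proves that $\Lambda(c)\cap V$ is a $C^1$ submanifold of codimension $2$ in $X$ and of codimension $1$ in $S(c)$.
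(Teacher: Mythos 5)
Your proposal is correct and follows essentially the same route as the paper's proof: the contradiction argument via linear dependence of $dG(u)$ and $dQ(u)$, the application of Lemma \ref{lem_Q=0} to get $A(u)=\frac{a(p-2)^2}{2p}C(u)$, the deduction $A(u)=k_0$ from $Q(u)=0$, and the final contradiction $t_u^*=1$ hence $u\notin V$. Your observation that Lemma \ref{ilu} alone would not suffice and that the contradiction must come from the localization to $V$ is accurate, but the paper reaches the same endpoint by the same mechanism (reading $t_u^*=1$ off the relation between $A(u)$ and $C(u)$), so there is no substantive difference.
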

		
\begin{proof}
Note that the assumption $a > K_1 \gamma^{\frac{4-p}{2}} c^{3-p}$ is just used to guarantee that $V$ is an open, not empty subset in $S(c)$.
By definition, $u \in \Lambda(c)$ if and only if $G(u):=\|u\|_2^2 -c =0$ and $Q(u)=0.$ It is easy to check that $G, Q$ are of $C^1$ class. Hence we only have to prove that for any $u \in \Lambda(c)$,
$$
(dG(u), dQ(u)): X \rightarrow \R^2 \,\, \mbox{is surjective}.
$$
If this failed, we would have that $dG(u)$ and $dQ(u)$ are linearly dependent, which implies that there exists a $\nu \in \R$ such that for any $\varphi \in X$,
$$
 2 \int_{\R^N} \nabla u \cdot \nabla \varphi \, dx
- a(p-2) \int_{\R^N}|u|^{p-2}u \varphi \, dx=   2\nu \int_{\R^N}u \varphi\,dx,
$$
namely that $u$ solves
$$
- \Delta u  - a \frac{(p-2)}{2} |u|^{p-2}u = \nu u.
$$
At this point from Lemma \ref{lem_Q=0} we deduce that
\begin{equation}\label{two}
A(u) = \frac{a(p-2)^2}{2p}C(u).
\end{equation}
Then on one hand, since  $Q(u) =0$ we obtain that $A(u) =k_0$. On the other hand (\ref{two}) implies that $t_u^* =1$. Thus, from the definition of $V$, one deduce that $u \notin V$ which contradicts our assumption.
\end{proof}

\begin{lem}\label{l2}
Assume that $\gamma <0$, $a >0$ and $p<4$. If $ a < K_2 \gamma^{\frac{4-p}{2}} c^{3-p}$ then it holds that $\Lambda(c) \subset V$. In particular 
$\Lambda(c) $ is a submanifold, of class $C^1$, of codimension $2$ of $X$ and a submanifold of codimension $1$ in $S(c)$.
\end{lem}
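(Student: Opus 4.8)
The plan is to prove the inclusion $\Lambda(c)\subset V$ first, and then to read off the submanifold structure from Lemma \ref{lem_T(c)sousvariete2-Louis}. For the inclusion I would argue by contradiction, using the variational characterization of $V$ supplied by Lemma \ref{vuoto}(1), namely that $u\in V$ if and only if $Q(u^{t_u^*})<0$, where $t_u^*$ is defined in \eqref{tstar}.

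First I would record the elementary properties of the fiber $\phi_u(t):=Q(u^t)=A(u)t^2-a\frac{p-2}{p}C(u)t^{p-2}+\frac{\gamma c^2}{4}$. Since $2<p<4$ and $A(u),C(u)>0$ for any $u\in S(c)$, one has $\phi_u(t)\to\frac{\gamma c^2}{4}>0$ as $t\to 0^+$ and $\phi_u(t)\to+\infty$ as $t\to+\infty$, while $\phi_u$ has a single critical point, located at $t_u^*$, which is therefore its strict global minimum.

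Now fix $u\in\Lambda(c)$, so that $Q(u)=\phi_u(1)=0$. Because $t_u^*$ is the global minimizer of $\phi_u$, we automatically get $Q(u^{t_u^*})\le Q(u)=0$. Suppose, for contradiction, that $u\notin V$; then $Q(u^{t_u^*})\ge 0$ by Lemma \ref{vuoto}(1), hence $Q(u^{t_u^*})=0=\phi_u(1)$. Since the minimizer of $\phi_u$ is unique, this forces $t_u^*=1$, which by \eqref{tstar} is exactly the condition $2A(u)=a\frac{(p-2)^2}{p}C(u)$, i.e. $\left.\frac{d}{dt}\right|_{t=1}Q(u^t)=0$. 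Applying Lemma \ref{lem_A(u)=k_0_2}, whose hypotheses $Q(u)=0$ and $\left.\frac{d}{dt}\right|_{t=1}Q(u^t)=0$ are now met, gives $A(u)=k_0$. But Lemma \ref{ilu}, applied with $Q(u)=0\le 0$ and $A(u)=k_0$, then yields $a\ge K_2\,\gamma^{\frac{4-p}{2}}\,c^{3-p}$, contradicting the standing hypothesis $a<K_2\,\gamma^{\frac{4-p}{2}}\,c^{3-p}$. Therefore $Q(u^{t_u^*})<0$, so $u\in V$, and $\Lambda(c)\subset V$ is established.

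For the final assertion, note that $\Lambda(c)\subset V$ means $\Lambda(c)=\Lambda(c)\cap V$, so the submanifold claim follows immediately from Lemma \ref{lem_T(c)sousvariete2-Louis} in the relevant range $a>K_1\,\gamma^{\frac{4-p}{2}}\,c^{3-p}$; when $a<K_1\,\gamma^{\frac{4-p}{2}}\,c^{3-p}$ Lemma \ref{fuori} gives $\Lambda(c)=\emptyset$ and there is nothing to prove. I expect the only genuinely delicate point to be the implication chain $Q(u^{t_u^*})=0\Rightarrow t_u^*=1\Rightarrow A(u)=k_0$, which is precisely what lets Lemma \ref{ilu} fire and produce the contradiction; everything else is bookkeeping. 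It is also worth double-checking the strict versus non-strict inequalities: since we only use $Q(u)=0$, it is the non-strict conclusion $a\ge K_2\,\gamma^{\frac{4-p}{2}}\,c^{3-p}$ of Lemma \ref{ilu} that we contradict.
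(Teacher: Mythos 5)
Your proof of the inclusion $\Lambda(c)\subset V$ is correct and essentially reproduces the paper's own argument: both exploit that $t_u^*$ minimizes $\phi_u$ to get $Q(u^{t_u^*})\le Q(u)=0$, rule out equality by combining Lemma \ref{lem_A(u)=k_0_2} with Lemma \ref{ilu} against the hypothesis $a<K_2\,\gamma^{\frac{4-p}{2}}c^{3-p}$, and conclude via Lemma \ref{vuoto} (your extra step of deducing $t_u^*=1$ from uniqueness of the minimizer, rather than applying the two lemmas to $u^{t_u^*}$ directly, is a cosmetic variation). The only loose end, and a very minor one, is the borderline case $a=K_1\,\gamma^{\frac{4-p}{2}}c^{3-p}$ omitted from your dichotomy for the submanifold claim, where $\Lambda(c)\neq\emptyset$ by Lemma \ref{fuori}; but the surjectivity argument of Lemma \ref{lem_T(c)sousvariete2-Louis} still applies there, since the inclusion $\Lambda(c)\subset V$ you have just established guarantees that $V$ is nonempty (and it is open in $S(c)$ in any case).
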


\begin{proof}
If $u \in \Lambda(c)$, then  $\phi_u(1) = Q(u) =0$. Since $t^*_u$ is the minimum point of $\phi_u$, we deduce that  
 $Q(u^{t^*_u} ) = \phi_u({t^*_u}) \leq 0$. Since $ a < K_2 \gamma^{\frac{4-p}{2}} c^{3-p}$ we deduce from Lemma \ref{ilu} and \ref{lem_A(u)=k_0_2} that $Q(u^{t^*_u} ) =  0$ is not possible. Thus $Q(u^{t^*_u} )  < 0$ and we get from Lemma \ref{vuoto} that  $u \in V$.	
\end{proof}

From now on we assume that $a < K_2\; \gamma^\frac{4-p}{2}\; c^{3-p}. $
In view of Lemma \ref{coercive2} we can define
	$$\gamma^+(c) := \sup_{\Lambda^+(c)} F(u) \quad \mbox{and} \quad \gamma^-(c) := \sup_{\Lambda^-(c)} F(u).$$
Aiming to prove Theorem \ref{infreach233} we shall establish the existence of a Palais-Smale sequence $(u_n) \subset \Lambda^{+}(c)$ (respectively $(u_n) \subset \Lambda^{-}(c)$)  for $F$ restricted to $S(c)$. Arguing as in Section 4, we define the  two functionals
$$I^+ : V \mapsto \R \quad \mbox{by} \quad I^+(u) = F(u^{s_{u}^+}) \quad \mbox{and} \quad
I^- : V \mapsto \R \quad \mbox{by} \quad I^-(u) = F(u^{s_{u}^-}).$$
By  Lemma \ref{regularite2}, the maps $u \mapsto s_{u^{+}}$  and $u \mapsto s_{u^{-}}$ are of class $C^1$ and thus  the functionals $I^+$ and $I^-$ are of class $C^1$.
As in Section 4, we can prove the following results.

\begin{lem}\label{isomorphism2}
	The maps $T_u V \rightarrow T_{ u^{s^+_{u}}} V$ defined by  $\psi \rightarrow  \psi^{s^+_{u}} $  and
	$T_u V \rightarrow T_{ u^{s^-_{u}}} V $ defined by $\psi \rightarrow \psi^{s^-_{u}} $
	are isomorphisms. 
\end{lem}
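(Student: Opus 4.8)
The plan is to mirror the proof of Lemma \ref{isomorphism}, exploiting the fact that $V$ is an \emph{open} subset of $S(c)$. Indeed, by Lemma \ref{vuoto}, $V$ is open in $S(c)$, so for every $u \in V$ the tangent space $T_u V$ coincides with $T_u S(c) = \{v \in X : \int_{\R^2} u v \, dx = 0\}$; likewise, once we know the dilated functions remain in $V$, we have $T_{u^{s^\pm_u}} V = T_{u^{s^\pm_u}} S(c)$. This reduces the statement essentially to the one already proved in Lemma \ref{isomorphism}. The prerequisite that the target is genuinely a tangent space of $V$ is supplied by Lemma \ref{minimo8}, which gives $u^{s^+_u} \in \Lambda^+(c) \subset V$ and $u^{s^-_u} \in \Lambda^-(c) \subset V$, and more generally by Lemma \ref{dentro22}, which guarantees $u^t \in V$ for all $t>0$.

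I would treat the $s^+_u$ case, writing $s = s^+_u$ for brevity (the case $s^-_u$ being identical). First I would check well-definedness: for $\psi \in T_u V = T_u S(c)$, the change of variables $y = sx$ yields
$$
\int_{\R^2} u^s(x)\, \psi^s(x)\, dx = \int_{\R^2} s\,u(sx)\, s\,\psi(sx)\, dx = \int_{\R^2} u(y)\,\psi(y)\, dy = 0,
$$
so that $\psi^s \in T_{u^s} S(c) = T_{u^s} V$ and the map is well defined. Linearity is immediate from the linearity of the dilation $\psi \mapsto \psi^s$. For bijectivity I would exhibit the explicit inverse $\phi \mapsto \phi^{1/s}$, which, by the same change of variables, maps $T_{u^s} V$ back into $T_u V$ and composes with $\psi \mapsto \psi^s$ to the identity in both directions.

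The only point requiring genuine care—and the main (though modest) obstacle—is continuity of the map and of its inverse as operators on $X$, rather than merely on $H$. Using the scaling relations, dilation preserves the $L^2$-norm and scales $A$ by $t^2$, so the $H$-part is controlled; the subtle term is the weight $|\psi^t|_*^2 = \int_{\R^2} \log(1+|y|/t)\,\psi^2(y)\, dy$, which is \emph{not} scale invariant. I would bound it by noting that for $t$ in a fixed compact subset of $(0,\infty)$ one has $\log(1+|y|/t) \leq \log(1+|y|) + |\log t|$, giving $|\psi^t|_*^2 \leq |\psi|_*^2 + |\log t|\,\|\psi\|_2^2$; hence dilation by the fixed positive number $s$ (and by $1/s$) is a bounded operator on $X$. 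Consequently both $\psi \mapsto \psi^s$ and its inverse are bounded linear bijections, so they are isomorphisms, exactly as in \cite[Lemma 3.6]{BaSo2}.
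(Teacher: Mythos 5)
Your proposal is correct and follows essentially the same route as the paper: the paper simply states that Lemma \ref{isomorphism2} is proved ``as in Section 4,'' and the proof of Lemma \ref{isomorphism} there consists precisely of your change-of-variables computation for well-definedness, with linearity and the remaining (boundedness/invertibility) details declared standard via \cite[Lemma 3.6]{BaSo2}. Your additional observations --- that $V$ is open in $S(c)$ so the tangent spaces coincide with those of $S(c)$, and the explicit bound $|\psi^t|_*^2 \leq |\psi|_*^2 + |\log t|\,\|\psi\|_2^2$ giving boundedness on $X$ --- correctly fill in the parts the paper leaves implicit.
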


\begin{lem}\label{transform2}
	We have that $dI^+(u)[\psi]=dF(u^{s^+_u}) [\psi^{s^+_{u}}]$ for any $u \in V$, $\psi  \in T_u V$
	 and $dI^-(u)[\psi]=dF(u^{s^-_u}) [\varphi^{s^-_{u}}]$ for any $u\in V$ and $\psi \in T_u V$.
\end{lem}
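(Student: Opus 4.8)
The plan is to reproduce the argument of Lemma~\ref{transform}, which handles the case $\gamma>0$; the only structural inputs are the strict local minimum/maximum nature of the fibers at $s_u^\pm$ recorded in Lemma~\ref{minimo8} and the regularity $u\mapsto s_u^\pm\in C^1$ of Lemma~\ref{regularite2}. I give the details for $I^+$ and indicate the (sign-only) modifications for $I^-$. First I would note that, since $V$ is open in $S(c)$ by Lemma~\ref{vuoto}, one has $T_uV=T_uS(c)$ for every $u\in V$. Fix $u\in V$ and $\psi\in T_uV$, and write $\psi=\gamma'(0)$ for a $C^1$ curve $\gamma:(-\varepsilon,\varepsilon)\to S(c)$ with $\gamma(0)=u$; shrinking $\varepsilon$ if necessary, the openness of $V$ guarantees $\gamma(t)\in V$ for all $|t|<\varepsilon$, so that $s_t:=s_{\gamma(t)}^+$ is well defined, with $s_0=s_u^+$, and depends continuously on $t$ by Lemma~\ref{regularite2}.

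Next I would form the incremental quotient $\bigl(I^+(\gamma(t))-I^+(\gamma(0))\bigr)/t=\bigl(F(\gamma(t)^{s_t})-F(\gamma(0)^{s_0})\bigr)/t$. Because $s_t$ is a strict local minimum of $s\mapsto F(\gamma(t)^s)$ and $s_0$ of $s\mapsto F(\gamma(0)^s)$ (Lemma~\ref{minimo8}), for $|t|$ small one has the two-sided bound
\[
F(\gamma(t)^{s_t})-F(\gamma(0)^{s_t})\le F(\gamma(t)^{s_t})-F(\gamma(0)^{s_0})\le F(\gamma(t)^{s_0})-F(\gamma(0)^{s_0}).
\]
Writing $F$ out through $A$, $V$ and $C$ and applying the mean value theorem to each piece with the dilation parameter frozen (as in the displays of Lemma~\ref{transform}, with intermediate points $\tau_i\in(0,1)$), the continuity of $t\mapsto s_t$ lets both the lower and the upper bound converge, as $t\to0$ from either side, to one and the same quantity; recalling $u^t(x)=tu(tx)$ and the scaling identities~\eqref{scalings}, this common value is exactly $dF(u^{s_u^+})[\psi^{s_u^+}]$.

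The only point that requires care is the logarithmic term arising from $V(u^s)=V(u)-c^2\log s$: differentiating the fiber produces an extra contribution $\gamma\log(s_u^+)\bigl(\int_{\R^2}u^2\,dx\bigr)\bigl(\int_{\R^2}u\psi\,dx\bigr)$, precisely as in the closing display of the proof of Lemma~\ref{transform}. Since $\psi\in T_uS(c)$ forces $\int_{\R^2}u\psi\,dx=0$, this term vanishes and one obtains $dI^+(u)[\psi]=dF(u^{s_u^+})[\psi^{s_u^+}]$. I expect the main, and only modest, obstacle to be bookkeeping the $I^-$ case: there $s_u^-$ is a strict local \emph{maximum}, so both inequalities in the sandwich flip; the squeeze nonetheless pins the limit to the common value $dF(u^{s_u^-})[\psi^{s_u^-}]$, the vanishing of the logarithmic term being unchanged, so no genuinely new argument is needed.
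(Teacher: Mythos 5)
Your proposal is correct and follows essentially the same route as the paper, which for this lemma simply states that the results of Section 4 carry over and implicitly reuses the proof of Lemma \ref{transform}: the sandwich via the strict local minimum/maximum of the fiber map, the mean value theorem on each of $A$, $V$, $C$ with the dilation parameter frozen, and the observation that the extra $\gamma\log(s_u^{\pm})$ term vanishes because $\int_{\R^2}u\psi\,dx=0$. Your explicit remarks that $V$ is open (so $T_uV=T_uS(c)$ and the curve stays in $V$ for small $t$) and that the inequalities flip for $I^-$ are exactly the adjustments needed.
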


In our next lemma $I^{\pm}$ denotes either $I^+$ or $I^-$ and accordingly $\Lambda^{\pm}(c)$ denotes $\Lambda^+(c)$ (or $\Lambda^-(c)$) and $s_u = s_{u}^+$ (or $s_u = s_{u}^-$). This lemma is crucial to guarantee that it is possible to develop a minimax argument inside $V$.

\begin{lem}\label{boundary}
Assume that $\gamma <0$, $p<4$ and let $$K_1\; \gamma^\frac{4-p}{2}\; c^{3-p} < a < K_2\; \gamma^\frac{4-p}{2}\; c^{3-p}. $$	
	If $(v_n) \subset V$ is a sequence with $v_n \to v_0 \in \partial V$ strongly in $X$, then $I^\pm(v_n) \to - \infty$.
\end{lem}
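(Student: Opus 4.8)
The plan is to reduce the statement to the coercivity estimate of Lemma~\ref{coercive2} and then to control how the projection parameters $s^{\pm}_{v_n}$ behave as $v_n$ reaches $\partial V$. First I would record the meaning of the boundary. By Lemma~\ref{vuoto} one has $u\in V$ if and only if $Q(u^{t^*_u})=\phi_u(t^*_u)<0$, and the map $u\mapsto \phi_u(t^*_u)$ is continuous on $S(c)$ (since $u\mapsto t^*_u$ is continuous through $A(u),C(u)$ and $\phi_u(t)=Q(u^t)$ depends continuously on $(u,t)$). Passing to the limit along $v_n\in V$ with $v_n\to v_0\in\partial V$ strongly in $X$, and using that $v_0\notin V$, I obtain $Q(v_0^{t^*_{v_0}})=0$; equivalently, by the chain of equivalences in the proof of Lemma~\ref{vuoto}, the limiting projection $w_0:=v_0^{t^*_{v_0}}$ satisfies $w_0\in\Lambda(c)$ and $A(w_0)=k_0$.

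The second step is to rephrase $I^{\pm}$ so that Lemma~\ref{coercive2} can be applied. Since $v_n^{s^{\pm}_{v_n}}\in\Lambda^{\pm}(c)\subset\Lambda(c)$, and since the estimate in the proof of Lemma~\ref{coercive2} gives $F(u)\le -\tfrac{4-p}{2(p-2)}A(u)+\tfrac{\gamma c^{3/2}}{4}A(u)^{1/2}$ on $\Lambda(c)$, so that $F(u)\to-\infty$ whenever $A(u)\to\infty$, it suffices to prove that $A(v_n^{s^{\pm}_{v_n}})=(s^{\pm}_{v_n})^{2}A(v_n)\to+\infty$. Because $v_n\to v_0$ in $X$ forces $A(v_n)\to A(v_0)>0$ and $C(v_n)\to C(v_0)>0$, this amounts to showing that the projection parameter escapes, namely $s^{+}_{v_n}\to+\infty$ for $I^{+}$ and $s^{-}_{v_n}\to 0$ for $I^{-}$ (in the latter case one may alternatively exploit directly the $\tfrac{\gamma c^2}{4}\log t$ and $V_1$ contributions in $g_{v_n}$, which both drive $g_{v_n}\to-\infty$ as the scale tends to $0$). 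Recall here that $s^{-}_{v_n}<t^*_{v_n}<s^{+}_{v_n}$ are exactly the two zeros of $\phi_{v_n}$, i.e. the strict local maximum and strict local minimum of the fiber map $g_{v_n}$ identified in Lemma~\ref{minimo8}.

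The heart of the matter, and the step I expect to be the main obstacle, is to prove that the two branches $s^{-}_{v_n}$ and $s^{+}_{v_n}$ cannot both stabilize at the finite value $t^*_{v_0}$, and this is exactly where the strict inequality $a<K_2\,\gamma^{(4-p)/2}c^{3-p}$ is indispensable. Indeed, the merged limit $w_0=v_0^{t^*_{v_0}}$ would be a point of $\Lambda(c)$ obtained as a limit of the nondegenerate critical points $v_n^{s^{\pm}_{v_n}}$, with $A(w_0)=k_0$; by Lemma~\ref{lem_A(u)=k_0_2} together with Remark~\ref{advantageV} such a configuration is a degenerate critical value of the fiber map, so it would belong to $\Lambda^0(c)$, while by Lemma~\ref{ilu} a function with $Q(w_0)\le 0$ and $A(w_0)=k_0$ can exist only if $a\ge K_2\,\gamma^{(4-p)/2}c^{3-p}$ — contradicting our assumption. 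Thus the strict gap $K_2-a>0$ rules out a finite limiting scale and forces $s^{+}_{v_n}\to\infty$ (resp. $s^{-}_{v_n}\to 0$). To make this quantitative I would feed a lower bound of the form $A(v_n^{t^*_{v_n}})-k_0\ge\delta>0$, produced by the strict condition, into the identity $g'_{v_n}(t)=t^{-1}\phi_{v_n}(t)$ and track the geometry of $g_{v_n}$ from Lemma~\ref{minimo8} to conclude $A(v_n^{s^{\pm}_{v_n}})\to\infty$; the desired divergence $I^{\pm}(v_n)\to-\infty$ then follows immediately from Lemma~\ref{coercive2}. The delicate point throughout is precisely this uniform escape of the projection parameters, since it is the only mechanism compatible with a finite strong limit $v_0$ yet producing an unbounded Dirichlet energy along the projected sequence.
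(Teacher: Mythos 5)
There is a genuine gap, and it sits exactly at the step you yourself flag as ``the heart of the matter''. To conclude you need $s^{+}_{v_n}\to+\infty$ and $s^{-}_{v_n}\to 0$, but your argument only excludes the single limiting value $t^{*}_{v_0}$; it says nothing about a subsequence with $s^{+}_{v_n}\to\bar s$ for some finite $\bar s>t^{*}_{v_0}$ (or $s^{-}_{v_n}\to\bar s$ with $0<\bar s<t^{*}_{v_0}$), which is the case that actually has to be ruled out. Moreover, the ``contradiction'' you produce does not involve $s^{\pm}_{v_n}$ at all: you show that the mere existence of $v_0\in\partial V$ yields $w_0=v_0^{t^{*}_{v_0}}$ with $Q(w_0)=0$ and $A(w_0)=k_0$, hence $a\ge K_2\,\gamma^{(4-p)/2}c^{3-p}$ by Lemma~\ref{ilu}. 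Read literally, that argument proves $\partial V=\varnothing$ under the hypotheses; it cannot ``force'' any asymptotic behaviour of the projection parameters. The paper's mechanism for excluding a finite nonzero limit is different and is the step you are missing: if $s^{\pm}_{v_n}\to\bar s\in(0,\infty)$, then by strong convergence $Q(v_0^{\bar s})=0$, so $v_0^{\bar s}\in\Lambda(c)$; Lemma~\ref{l2} gives $\Lambda(c)\subset V$ precisely because $a<K_2\,\gamma^{(4-p)/2}c^{3-p}$, and the dilation invariance of $V$ (Lemma~\ref{dentro22}) then puts $v_0$ itself in the open set $V$, contradicting $v_0\in\partial V$. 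Your proposal never invokes Lemma~\ref{l2} or Lemma~\ref{dentro22}, and without them the divergence of $s^{+}_{v_n}$ and the vanishing of $s^{-}_{v_n}$ are not established. Your quantitative fallback is also untenable: there is no uniform $\delta>0$ with $A(v_n^{t^{*}_{v_n}})-k_0\ge\delta$, since by continuity $A(v_n^{t^{*}_{v_n}})\to A(v_0^{t^{*}_{v_0}})=k_0$ as $v_n\to v_0\in\partial V$.

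A second, more minor defect: the reduction ``it suffices to prove $A(v_n^{s^{\pm}_{v_n}})\to+\infty$'' is wrong for the minus branch. There $s^{-}_{v_n}\to 0$, so $A(v_n^{s^{-}_{v_n}})=(s^{-}_{v_n})^{2}A(v_n)\to 0$ and the coercivity bound of Lemma~\ref{coercive2} gives nothing; the divergence $I^{-}(v_n)\to-\infty$ comes solely from the term $\tfrac{\gamma c^{2}}{4}\log(s^{-}_{v_n})\to-\infty$ in
\begin{equation*}
G(v_n^{s})=-\frac{4-p}{2(p-2)}\,s^{2}A(v_n)-\frac{\gamma}{4}V(v_n)+\frac{\gamma c^{2}}{4}\log s-\frac{\gamma c^{2}}{4(p-2)},
\end{equation*}
the $V$-contribution being bounded along the strongly convergent sequence. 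Your parenthetical remark points in this direction, but your concluding sentence reverts to claiming $A(v_n^{s^{\pm}_{v_n}})\to\infty$ for both signs, which is false for $I^{-}$. For the plus branch your use of the coercivity estimate is fine once $s^{+}_{v_n}\to\infty$ is actually proved.
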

\begin{proof}
	Let
	$(v_n) \subset V$ such that $v_n \to v_0 \in \partial V$ strongly in $X$, as $n \to  \infty$.
	
	Since $v_n \in V$,  we have $(t^*_{v_n})^2 A(v_n) > k_0$ and
	$s_{v_n}^- \leq  t^*_{v_n} \leq s_{v_n}^+$. Moreover since $v_0 \in \partial V$, we have 
	$(t^*_{v_0})^2 A(v_0) = k_0$,  $t^*_{v_0} \neq 0$ and 
	$
	\limsup  s_{v_n}^- \leq t^*_{v_0} \leq \liminf  s_{v_n}^+$.
	
	Now if $(s_{v_n}^+)$ is bounded from above, then  up to a subsequence, it converges to
	$\bar s \neq 0$ and $t^*_{v_0} \leq \bar s$. 
	Moreover
since $v_n \to v_0$ strongly in $X$ and 
	$Q(v_n^{s_{v_n}^+} )= 0$,  
	we infer that $Q(v_0^{\bar s})=0.$
	
	At this point we deduce from Lemma \ref{l2} that $v_0^{\bar s} \in V$ and thus, by Lemma \ref{dentro22} we have $v_0 \in V$ in contradiction with the assumption that $v_0 \in \partial V$.
	
	We conclude that  $(s^+_{v_n})$ is not bounded from above and thus, up to a subsequence, $s^+_{v_n} \to + \infty$, as $n \to  \infty$. Taking into account that 
	$$
	I^+ (v_n) =G(v_n^{s_{v_n}^+})=  
	- \frac{4-p}{2(p-2)}
	(s_{v_n}^+)^2\; A(v_n) -\frac{\gamma}{4} V(v_n) + c^2 \frac{\gamma}{4}\; \log(s_{v_n}^+)
	- \gamma \frac{c^2}{4(p-2)} 
	$$
	we deduce that $I^+(v_n) \to - \infty$, as $n \to  \infty$.

On the other side, up to subsequences, 
$(s_{v_n}^-)$  converges to
$\bar s$, as $n \to  \infty$. If $\bar s \neq 0$, we can argue as before, deriving a contradiction.  It follows that 
$s_{v_n}^- \to 0^+$ as $n \to \infty$.
Taking into account that 
$$
I^- (v_n) =G(v_n^{s_{v_n}^-})=  
- \frac{4-p}{2(p-2)}
(s_{v_n}^-)^2\; A(v_n) -\frac{\gamma}{4} V(v_n) + c^2 \frac{\gamma}{4}\; \log(s_{v_n}^-)
- \gamma \frac{c^2}{4(p-2)} 
$$
we deduce that $I^-(v_n) \to - \infty$, as $n \to \infty$.
\end{proof}

\begin{lem}
	\label{ps222}
	Assume that $\gamma <0$, $p<4$ and that
	 $K_1\; \gamma^\frac{4-p}{2}\; c^{3-p} < a < K_2\; \gamma^\frac{4-p}{2}\; c^{3-p}. $
	Let $\mathcal{G}$ be a homotopy stable family of compact subsets of $V$ with closed boundary $B$ and let
$$
\textcolor{blue}{e_{\mathcal{G}}^\pm}
:= \sup_{A\in \mathcal{G}}\min_{u\in A}I^{\pm}(u).$$ 
	Suppose that $B$ is contained in a connected component of $\Lambda^{\pm}(c)$ and that $$\min\{\inf I^{\pm}(B),0\}>
	\textcolor{blue}{e_{\mathcal{G}}^\pm} > -\infty.$$ Then there exists a Palais-Smale sequence $(u_n) \subset \Lambda^{\pm}(c)$ for $F$ restricted to $V$ at level 	
$\textcolor{blue}{e_{\mathcal{G}}^\pm}$.
	\end{lem}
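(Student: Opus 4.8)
The plan is to mirror the proof of Lemma~\ref{ps}, but in the ``dual'' form adapted to the $\sup\min$ level $e_{\mathcal{G}}^\pm$ and, crucially, to keep every deformation inside the open set $V$. First I would pick $(D_n)\subset\mathcal{G}$ with $\min_{u\in D_n}I^{\pm}(u)>e_{\mathcal{G}}^\pm-\frac1n$, and introduce the dilation deformation $\eta:[0,1]\times V\to V$, $\eta(t,u)=u^{\,1-t+t\,s_u}$, where $s_u$ stands for $s_u^+$ or $s_u^-$. Because $u\in V$ forces $u^s\in V$ for every $s>0$ by Lemma~\ref{dentro22}, this $\eta$ genuinely maps $V$ into $V$; moreover $s_u=1$ on $\Lambda^{\pm}(c)$ and $B\subset\Lambda^{\pm}(c)$, so $\eta(t,\cdot)=\mathrm{id}$ on $(\{0\}\times V)\cup([0,1]\times B)$, and $\eta$ is continuous by Lemma~\ref{regularite2}. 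Hence $A_n:=\eta(\{1\}\times D_n)=\{u^{s_u}:u\in D_n\}\subset\Lambda^{\pm}(c)$ belongs to $\mathcal{G}$, and since $I^{\pm}(u^{s_u})=I^{\pm}(u)$ the family $(A_n)$ is again a maximizing sequence for $e_{\mathcal{G}}^\pm$.

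Next I would apply the minimax principle \cite[Theorem 3.2]{Gh} to $-I^{\pm}$, so that the $\sup\min$ problem becomes the $\inf\max$ problem at level $-e_{\mathcal{G}}^\pm$; one checks that the boundary hypothesis $\min\{\inf I^{\pm}(B),0\}>e_{\mathcal{G}}^\pm$ translates exactly into the hypothesis of that theorem for $-I^{\pm}$. This produces a Palais--Smale sequence $(\tilde u_n)$ for $I^{\pm}$ on $V$ at level $e_{\mathcal{G}}^\pm$ with $\mathrm{dist}_X(\tilde u_n,A_n)\to0$. The delicate point, absent in the case $\gamma>0$, is that $V$ is only open in $S(c)$ and hence not complete; this is exactly where Lemma~\ref{boundary} is needed. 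Since $I^{\pm}(\tilde u_n)\to e_{\mathcal{G}}^\pm>-\infty$ while $I^{\pm}\to-\infty$ as one approaches $\partial V$ in $X$, the superlevel sets $\{I^{\pm}\ge e_{\mathcal{G}}^\pm-\varepsilon\}$ stay a positive distance from $\partial V$ and are therefore complete, so the deformation lemma underlying \cite[Theorem 3.2]{Gh} can be run inside $V$.

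Finally, writing $s_n=s_{\tilde u_n}$ and $u_n=\tilde u_n^{\,s_n}\in\Lambda^{\pm}(c)$, I would control the dilations exactly as in Lemma~\ref{ps}: from $s_n^2=A(u_n)/A(\tilde u_n)$ it suffices to bound $A(u_n)$ and $A(\tilde u_n)$ above and below by positive constants. Since $F(u_n)=I^{\pm}(\tilde u_n)\to e_{\mathcal{G}}^\pm$ is finite, Lemma~\ref{coercive2}(2) gives $A(u_n)\le m_2$; and the identity $C(u)=\frac{p}{a(p-2)}[A(u)+\frac{\gamma c^2}{4}]$ valid on $\Lambda(c)$, combined with the Gagliardo--Nirenberg bound (\ref{minoration_norme_u_L^p}), forces $A(u_n)$ away from $0$, since otherwise $C(u_n)$ would tend to a positive limit while its upper bound $K_{GN}A(u_n)^{p/2-1}c$ tends to $0$. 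The same two ingredients, together with $\mathrm{dist}_X(\tilde u_n,A_n)\to0$ and the compactness of the $A_n$, yield $\frac1C\le A(\tilde u_n)\le C$, whence $\frac1C\le s_n^2\le C$. Transporting $dI^{\pm}(\tilde u_n)$ into $dF(u_n)$ through the tangent-space isomorphism $\psi\mapsto\psi^{s_n}$ of Lemmas~\ref{isomorphism2} and~\ref{transform2}, and using the uniform two-sided bound on $s_n$, one concludes that $(u_n)\subset\Lambda^{\pm}(c)$ is a Palais--Smale sequence for $F$ on $V$ (equivalently on $S(c)$, as $V$ is open in $S(c)$) at level $e_{\mathcal{G}}^\pm$. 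The main obstacle throughout is the boundary $\partial V$: dilation invariance of $V$ (Lemma~\ref{dentro22}) keeps the deformation admissible, and the blow-down of $I^{\pm}$ at $\partial V$ (Lemma~\ref{boundary}) supplies the completeness required to apply Ghoussoub's machinery.
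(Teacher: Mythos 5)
Your proposal is correct and follows essentially the same route as the paper's proof: the same dilation deformation $\eta(t,u)=u^{1-t+ts_u}$ kept inside $V$ via Lemma~\ref{dentro22}, the same appeal to Lemma~\ref{boundary} to secure completeness of the superlevel sets before invoking Ghoussoub's minimax principle, the same two-sided bound on $s_n^2=A(u_n)/A(\tilde u_n)$ obtained from Lemma~\ref{coercive2} and the Pohozaev-type identity on $\Lambda(c)$ combined with Gagliardo--Nirenberg, and the same transport of differentials through Lemmas~\ref{isomorphism2} and~\ref{transform2}. The only cosmetic difference is that you phrase the application of \cite[Theorem 3.2]{Gh} as applying it to $-I^{\pm}$, where the paper speaks of a ``direct adaptation''; the content is identical.
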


\begin{proof}
	Take $(D_n) \subset \mathcal{G}$ such that $\min_{u\in D_n}I^{\pm}(u) > 
	\textcolor{blue}{e_{\mathcal{G}}^\pm} -\frac1n$ and
	$$\eta :[0,1]\times V \rightarrow V,\ \eta (t,u)=u^{{1-t + ts_u}}.$$
	Since $s_u =1$ for any $u\in \Lambda^{\pm}(c)$, and $B\subset \Lambda^{\pm}(c)$, we have $\eta (t,u)=u$ for $(t,u)\in (\{0 \}\times V)\cup ([0,1]\times B)$. Observe also that $\eta$ is continuous. Then, using the definition of $\mathcal{G}$, we have
	$$A_n:= \eta (\{1 \}\times D_n )=\{u^{s_u}:\ u\in D_n \}\in \mathcal{G}.$$
	Also notice that $A_n \subset \Lambda^{\pm}(c)$ for all $n \in \N$. Let $v\in A_n$, i.e. $v=u^{s_u}$ for some $u\in D_n$ and $I^{\pm}(u)=I^{\pm}(v)$. 
	In particular we have  $\min_{A_n}I^{\pm}=\min_{D_n}I^{\pm}$ and therefore $(A_n) \subset \Lambda^{\pm}(c)$ is another maximizing sequence of $\textcolor{blue}{e_{\mathcal{G}}^\pm}$. 
	Now by  Lemma \ref{boundary}, we derive that the superlevels of $(I^\pm)^d$ are complete for any $d \in \R$. A direct adaption of the minimax principle \cite[Theorem 3.2]{Gh} implies the existence of 
	a Palais-Smale sequence $(\tilde{u}_n)$ for $I^{\pm}$ on $V$ at level 
	$\textcolor{blue}{e_{\mathcal{G}}^\pm}$
such that
	$dist_{X} (\tilde{u}_n , A_n)\rightarrow 0$ as $n\rightarrow \infty$. 
	Now  writing $s_n=s_{\tilde{u}_n}$ to shorten the notations, we set $u_n=\tilde{u}_n^{s_n}\in \Lambda^{\pm}(c)$.  We claim that there exists $C>0$ such that,
	\begin{equation}
	\label{BaSoe13}
	\frac1C \leq s_n^2\leq C
	\end{equation}
	for $n \in \N$ large enough. 
	Indeed, notice first that
	\begin{equation}
	\label{BaSoe23}
	s_n^2=\dfrac{A(u_n)}{A(\tilde{u}_n)}.
	\end{equation}
	By Gagliardo-Nirenberg inequality and
\begin{equation}\label{3222}
C(u_n) = \frac{p}{a(p-2)}\Big[ A(u_n) + \frac{\gamma c^2}{4}\Big]
\end{equation}
there exists $C>0$ such that 
	\begin{equation}
	\label{BaSoe33suoo}
	C \leq A(u_n) 
	\end{equation}
	 for $n \in \N$.
	 Moreover since $F(u_n)=I^{\pm}(\tilde{u}_n )\rightarrow 
	 \textcolor{blue}{e_{\mathcal{G}}^\pm}
	 $, we  know from Lemma \ref{coercive2} (ii), that there exists $M>0$ such that $A(u_n) \leq M$.
	Also, since $(A_n) \subset \Lambda^{\pm}(c)$, is a maximizing sequence for $
	\textcolor{blue}{e_{\mathcal{G}}^\pm}$
	, we deduce, still by Lemma \ref{coercive2} (ii), that $(A_n)$ is uniformly bounded in $H$ and thus from $dist_X(\tilde{u}_n , A_n)\rightarrow 0$ as $n\rightarrow \infty$, it implies that $\sup_n A(\tilde{u}_n) <\infty$. 
	Moreover since $A_n$ is compact for every $n \in \N$, there exists a $v_n \in A_n$ such that  $dist_X(\tilde{u}_n , A_n)=\|v_n-\tilde{u}_n \|_X$  and, using  
	$(\ref{BaSoe33suoo})$, 
	  we deduce that
	$$A( \tilde{u}_n)\geq A(v_n) - A(\tilde{u}_n - v_n)\geq K$$
	for some $K >0$ and this proves the claim.

	Next, we show that $(u_n) \subset \Lambda^{\pm}(c)$ is a Palais-Smale sequence for $F$ on $V$ at level $\textcolor{blue}{e_{\mathcal{G}}^\pm}$.
	
	Denoting by $\|.\|_\ast$ the dual norm of $(T_{u_n}S(c))^\ast$ and recalling that $V$ is open in $S(c)$, we have
	$$
	\|dF(u_n)\|_\ast=
		\sup_{\psi \in T_{u_n}V,\ \|\psi\| 
		\leq 1}|dF(u_n)[\psi]| = 
	\sup_{\psi \in T_{u_n}V,\ \|\psi\| \leq 1}|dF(u_n)[ (\psi^{-s_n})^{s_n}]|.
	$$ 
	From Lemma \ref{isomorphism} we know that  $T_{\tilde{u}_n} V \rightarrow T_{u_n} V $ defined by $\psi \rightarrow \psi^{s_n}$ is an isomorphism. 
	Also, from Lemma \ref{transform} we have that $dI^{\pm}(\tilde{u}_n)[\psi^{-s_n}]=dF(u_n)[(\psi^{-s_n})^{s_n}]$. It follows that
	\begin{equation}\label{lienclef}
	\|dF(u_n)\|_\ast = \sup_{\psi \in T_{u_n}V ,\ \|\psi\| \leq 1} |dI^{\pm}(\tilde{u}_n)[\psi^{-s_n}]|.
	\end{equation}
	At this point it is easily seen from \eqref{BaSoe1} that (increasing $C$ if necessary)
	$ \|\psi^{-s_n}\| \leq C \|\psi\| \leq C$
	and we deduce from \eqref{lienclef} that $(u_n) \subset \Lambda^{\pm}(c)$ is a Palais-Smale sequence for $F$ on $V$ at level 
	$\textcolor{blue}{e_{\mathcal{G}}^\pm}$.
\end{proof}

\begin{lem}\label{psbis}
Assume that $\gamma <0$, $p<4$ and that $$K_1\; \gamma^\frac{4-p}{2}\; c^{3-p} < a < K_2\; \gamma^\frac{4-p}{2}\; c^{3-p}. $$
	There exists a Palais-Smale sequence  $(u_n) \subset \Lambda^+(c)$ for $F$ restricted to $V$ at the level $\gamma^{+}(c)$ and a Palais-Smale sequence $(u_n) \subset \Lambda^-(c)$ for $F$ restricted to $V$ at the level $\gamma^{-}(c)$.
\end{lem}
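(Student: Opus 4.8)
The plan is to transcribe almost verbatim the argument used in the case $\gamma>0$ (Lemma \ref{psbisl}), simply replacing the ``$\inf$--$\max$'' minimax principle of Lemma \ref{ps} by its ``$\sup$--$\min$'' counterpart Lemma \ref{ps222}. Concretely, I would apply Lemma \ref{ps222} to the family $\bar{\mathcal{G}}$ consisting of all singletons $\{u\}$ with $u\in V$, together with the empty boundary $B=\varnothing$. Each element of $\bar{\mathcal{G}}$ being a single point, $\bar{\mathcal{G}}$ is trivially a homotopy stable family of compact subsets of $V$, and $B=\varnothing$ is an admissible closed boundary, vacuously contained in any connected component of $\Lambda^\pm(c)$.

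The key identification is that the associated minimax value is exactly the quantity we want. For singletons the inner minimum is just the value of $I^\pm$ at the point, so
$$
e^\pm_{\bar{\mathcal{G}}}:=\sup_{A\in\bar{\mathcal{G}}}\min_{u\in A}I^\pm(u)=\sup_{u\in V}I^\pm(u).
$$
I would then use that, by Lemma \ref{minimo8}, the map $u\mapsto u^{s_u^\pm}$ sends $V$ onto $\Lambda^\pm(c)$ (surjectivity holds because $s_u^\pm=1$ whenever $u\in\Lambda^\pm(c)$, so $\Lambda^\pm(c)$ is contained in the image) and that $I^\pm(u)=F(u^{s_u^\pm})$ by definition; hence $\sup_{u\in V}I^\pm(u)=\sup_{\Lambda^\pm(c)}F=\gamma^\pm(c)$. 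Once the hypotheses of Lemma \ref{ps222} are in force, it produces directly a Palais--Smale sequence $(u_n)\subset\Lambda^\pm(c)$ for $F$ restricted to $V$ at the level $\gamma^\pm(c)$, which is the assertion.

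It therefore only remains to check the level conditions of Lemma \ref{ps222}, namely $\min\{\inf I^\pm(\varnothing),0\}>e^\pm_{\bar{\mathcal{G}}}>-\infty$. Since $V\neq\varnothing$ (Lemma \ref{vuoto}) and consequently $\Lambda^\pm(c)\neq\varnothing$ (Lemma \ref{minimo8}), the value $\gamma^\pm(c)=\sup_{\Lambda^\pm(c)}F$ is $>-\infty$; moreover $\gamma^\pm(c)\le\sup_{\Lambda(c)}F<\infty$ by Lemma \ref{coercive2}, so the level is finite. The delicate point---the exact mirror of the positivity $\gamma^+(c)>0$ which, in the case $\gamma>0$, was guaranteed by the coercivity estimate of Lemma \ref{coercive}---is the strict inequality $e^\pm_{\bar{\mathcal{G}}}=\gamma^\pm(c)<0$ forced by the term $\min\{\,\cdot\,,0\}$ when $B=\varnothing$. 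I expect this sign condition to be the main obstacle, since Lemma \ref{coercive2} only yields boundedness from above and not by a negative constant. It should be extracted from the sharper upper bound underlying that lemma, namely $F(u)\le-\tfrac{4-p}{2(p-2)}A(u)+\tfrac{\gamma}{4}|V_2(u)|-\tfrac{\gamma c^2}{4(p-2)}$ for $u\in\Lambda(c)$, combined with \eqref{minoration_V2} and the range $a<K_2\,\gamma^{(4-p)/2}c^{3-p}$, so that the negative constant term dominates and forces $F<0$ on all of $\Lambda(c)$; alternatively, for $B=\varnothing$ the completeness provided by Lemma \ref{boundary} already suffices and the ``$0$'' in the hypothesis plays no role, in which case only finiteness of the level matters. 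Modulo this verification, the lemma follows exactly as Lemma \ref{psbisl} followed from Lemma \ref{ps}.
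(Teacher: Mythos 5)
Your proof is correct and follows exactly the paper's own argument: apply Lemma \ref{ps222} to the family $\bar{\mathcal{G}}$ of singletons of $V$ with empty boundary $B=\varnothing$, and identify the resulting $\sup$--$\min$ value with $\sup_{u\in V}I^{\pm}(u)=\gamma^{\pm}(c)$. The sign condition $\gamma^{\pm}(c)<0$ that you flag as the delicate point is not verified in the paper either (the authors simply state that the lemma ``follows directly from Lemma \ref{ps222}''), so your additional care there supplements, rather than diverges from, the published proof.
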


\begin{proof}
	Let us assume that $(u_n) \subset \Lambda^+(c)$, the other case can be treated similarly. We use Lemma \ref{ps222} 
	taking the set  $\bar{\mathcal{G}}$ of all singletons belonging to $V$ and $B=\varnothing$. It is clearly a homotopy stable family of compact subsets of $V$ (without boundary).  Since
$$e_{\bar{\mathcal{G}}}^{+}:=\sup_{A\in \bar{\mathcal{G}}}\min_{u\in A}I^{+}(u)=\sup_{u\in V}I^{+}(u)=\gamma^+(c)$$	
the lemma follows directly from Lemma \ref{ps222}.
\end{proof}

Now we are ready to give

\begin{proof}[Proof of Theorem \ref{infreach233}]
	We give the proof for $u^+$, the one for $u^-$ is almost identical. Let $(u_n) \subset  \Lambda^+(c)$ be a Palais-Smale sequence for $F$ restricted to $V$ at level $\gamma^+(c)$ whose existence is insured by Lemma \ref{psbis}. By Lemma  \ref{coercive2} we know that  $(u_n)$ is bounded in $H$ and that $(V_1(u_n))$ stays bounded.  Also since the functional $F$ is translational invariant, reasoning as in the proof of Lemma \ref{X-bound} it is not restrictive to assume that $(u_n) \subset \Lambda^+(c)$ is bounded in $X$. At this point we conclude using Lemma \ref{convergence}. 
\end{proof}

{\sc Address of the authors:}\\[1em]
\begin{tabular}{ll}
Silvia Cingolani & Louis Jeanjean\\
Dipartimento di Matematica & Laboratoire de Math\'ematiques (UMR 6623)\\
Università degli Studi di  Bari Aldo Moro & Universit\'{e} Bourgogne Franche-Comt\'{e}\\
Via Orabona 4  & 16, Route de Gray\\
70125  Bari, Italy  & 25030 Besan\c{c}on Cedex, France\\
Email : silvia.cingolani@uniba.it  & Email :  louis.jeanjean@univ-fcomte.fr
\end{tabular}

 \end{document}